\begin{document}
\parskip=6pt

\theoremstyle{plain}

\newtheorem {thm}{Theorem}[section]
\newtheorem {lem}[thm]{Lemma}
\newtheorem {cor}[thm]{Corollary}
\newtheorem {defn}[thm]{Definition}
\newtheorem {prop}[thm]{Proposition}
\numberwithin{equation}{section}

\newcommand{\cF}{{\cal F}}
\newcommand{\cA}{{\cal A}}
\newcommand{\cC}{{\cal C}}
\newcommand{\cH}{{\cal H}}
\newcommand{\cU}{{\cal U}}
\newcommand{\cK}{{\cal K}}
\newcommand{\cM}{{\cal M}}
\newcommand{\cO}{{\cal O}}
\newcommand{\cN}{{\cal N}}
\newcommand{\cV}{{\cal V}}
\newcommand{\cE}{{\cal E}}
\newcommand{\bC}{\mathbb C}
\newcommand{\bP}{\mathbb P}
\newcommand{\bN}{\mathbb N}
\newcommand{\bA}{\mathbb A}
\newcommand{\bR}{\mathbb R}
\newcommand{\bZ}{\mathbb Z}
\newcommand{\bT}{\mathbb T}
\newcommand{\fg}{\mathfrak g}
\newcommand{\fy}{\mathfrak y}
\newcommand{\fh}{\mathfrak h}
\newcommand{\fV}{\mathfrak V}
\newcommand{\var}{\varepsilon}
\renewcommand\qed{ }
\newcommand{\sgrad}{\text{sgrad} \,}
\newcommand{\sgru}{\text{sgrad}}
\newcommand{\grad}{\text{grad}\,}
\newcommand{\gru}{\text{grad}}
\newcommand{\id}{\text{id}}
\newcommand{\ad}{\text{ad}}
\newcommand{\const}{\text{const}\,}
\newcommand{\Ker}{\text{Ker}\,}
\newcommand{\opartial}{\overline\partial}
\newcommand{\Ree}{\text{Re}\,}
\newcommand{\an}{\text{an}}
\newcommand{\tr}{\text{tr}}
\newcommand{\wf}{\text{WF}_{\text A}}
\newcommand{\codim}{\text{codim}\,}

\begin{titlepage}
\title{\bf Isometries in spaces of K\"ahler potentials\thanks{Research partially supported by NSF grant DMS--1464150 and by the Center for Advanced Study of the Norwegian Academy of Sciences.\newline 2010 Mathematics subject classification 32Q15, 53C55}}
\author{L\'aszl\'o Lempert\\ Department of  Mathematics\\
Purdue University\\West Lafayette, IN
47907-2067, USA}
\end{titlepage}
\date{}
\maketitle
\abstract

The space of K\"ahler potentials in a compact K\"ahler manifold, endowed with Mabu\-chi's metric, is an infinite dimensional Riemannian manifold.
We characterize local isometries between spaces of K\"ahler potentials, and prove existence and uniqueness for such isometries.

\section{Introduction}

Let $X$ be an $n$ dimensional, connected compact complex manifold, and $\omega$ a smooth K\"ahler form on it.
Denoting by $C^\infty(X)$ the space of real valued smooth functions on $X$, the space of (relative) K\"ahler potentials is
$$
\cH=\cH(\omega)=\{u\in C^\infty(X)\colon \omega_u=\omega+i\partial\overline\partial u > 0\},
$$
a convex open subset of $C^\infty(X)$.
The space $\cH$ inherits a Fr\'echet manifold structure from $C^\infty(X)$, and each tangent space $T_u\cH$ can be canonically identified with
$C^\infty(X)$ itself.
The Mabuchi length $|\xi|_u$ of a tangent vector $\xi\in T_u\cH\approx C^\infty (X)$ is given by
$$
|\xi|_u^2=\int_X \xi^2 \omega_u^n \bigg/\int_X \omega^n.
$$
This turns $\cH$ into a smooth Riemannian manifold (see [L2, section 5 and Example 2.1]), whose study was initiated by Mabuchi, Semmes, and
Donaldson \cite{Do1,M,Se}.

The curvature of $\cH$ is covariantly constant, a property that for finite dimensional manifolds would imply the existence of local symmetries,
self--isometries of neighborhoods of an arbitrary point, that act on tangent vectors issued from the point by $-\text{id}$.
In \cite{Se} Semmes indicated how to obtain such local symmetries, even in (a variant of) $\cH$, about any $u$ for which $\omega_u$ is analytic.
More recently, in \cite{BCKR} Berndtsson, Cordero--Erausquin, Klartag, and Rubinstein constructed these symmetries by generalizing the Legendre
transformation from Euclidean spaces to K\"ahler manifolds.

Our goal here is to explore existence, uniqueness, and other properties of local isometries in spaces of K\"ahler potentials.
In particular, we will see that the symmetry property uniquely determines the local isometries constructed in 
\cite{Se, BCKR}, cf. Theorem 1.3 below.

Before proceeding, we need to discuss topologies on 
$\cH\subset C^\infty(X)$ and on $T_u\cH\approx C^\infty(X)$, of which there are several. For each $k=0,1,\dots,\infty$ there 
are $C^k$ topologies on $C^\infty(X)$, hence on $\cH$ and on $T_u\cH$; there is also the topology on $\cH$ induced by 
Mabuchi's metric and the corresponding topology on $T_u\cH$. In this paper topological notions like `openness', `density', 
etc. typically refer to the $C^\infty$ topology. The only exceptions are Theorem 1.4, and the corresponding 
Section 6, where we talk about $C^2$ neighborhoods; and Section 7, where topologies on real analytic 
functions also appear.

The main results are as follows.
Fix another connected compact K\"ahler manifold $(X',\omega')$.
We will write $\cH'=\cH(\omega')$ for its space of relative K\"ahler potentials; whenever $v\in C^\infty (X')$, we will abbreviate
$\omega'+i\partial\overline\partial v$ as $\omega_v$, and will forgo the more pedantic notation $\omega'_v$.
$|\quad|$ will denote Mabuchi length in $\cH'$ as well.
If $\cU\subset\cH$ is open and $k=1,2,\ldots$, by a $C^k$ isometry $F\colon\cU\to\cH'$ we mean a $C^k$ diffeomorphism between $\cU$ and an open $\cU'\subset\cH'$
whose differential $F_*\colon T\cU\to T\cH'$ satisfies $|F_*\xi|_{F(u)}=|\xi|_u$ for all $u\in\cU$ and $\xi\in T_u\cU$. \cite[Corollary 5.2]{L2} implies that  $C^2$ isometries are automatically $C^\infty$, but we do not know whether mere $C^1$ isometries need to be smooth.

\begin{thm}Suppose $\cU\subset\cH$ is open, $u\in\cU$, and $F\colon\cU\to\cH'$ is a $C^1$ isometry.
Then there are a $C^\infty$ diffeomorphism $\varphi\colon X'\to X$ and real numbers $a=\pm 1$, $b=0\text{ or }2a/\int_X\omega^n$
such that $\varphi^*\omega_u=\pm\omega_{F(u)}$ and
$$
F_* \xi=a\varphi^*\xi-b\int_X\xi \omega_u^n,\qquad\xi\in T_u\cH\approx C^\infty (X).
$$
In particular, the Poisson brackets $\{\ , \ \}_u, \{ \ , \ \}_{F(u)}$ determined by the symplectic forms $\omega_u,\omega_{F(u)}$ satisfy $\{F_*\xi,
F_*\eta\}_{F(u)}=\pm  F_*\{\xi,\eta\}_u$.
\end{thm}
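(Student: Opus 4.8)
The plan is to prove everything from a description of the differential $L:=F_*|_u\colon C^\infty(X)\to C^\infty(X')$ at the fixed point $u$ of the hypothesis, the Poisson identity being then a one‑line consequence of the displayed formula for $F_*$. By hypothesis $L$ is a linear isomorphism that is an isometry $L^2(X,\omega_u^n)\to L^2(X',\omega_{F(u)}^n)$, and it varies continuously in $u$ with $F(w)-F(u)=\int_0^1 F_*|_{u+t(w-u)}(w-u)\,dt$. Note that the isometry property of $L$ at the single point $u$ only remembers the volume forms $\omega_u^n$, $\omega_{F(u)}^n$ and not the symplectic forms themselves; so the rigidity that produces $\varphi$ must be drawn from the way the Mabuchi metric varies off $u$.

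The first — and, I expect, the main — obstacle is to show that $L$ intertwines the curvature operators of $\cH$ and $\cH'$, i.e. that
$$L\bigl(\{\{\xi,\eta\}_u,\zeta\}_u\bigr)=\{\{L\xi,L\eta\}_{F(u)},L\zeta\}_{F(u)},\qquad\xi,\eta,\zeta\in C^\infty(X),$$
using the formula from \cite{L2} (originally Mabuchi--Semmes--Donaldson) that the Riemann tensor of $\cH$ at $u$ is $R_u(\xi,\eta)\zeta=-\tfrac14\{\{\xi,\eta\}_u,\zeta\}_u$. For a $C^\infty$ isometry this is automatic from preservation of the Levi--Civita connection, but since $F$ is only assumed $C^1$ — and the excerpt flags that $C^1$ isometries are not known to be smooth — one cannot differentiate $u\mapsto F_*|_u$. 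The idea I would pursue is that along any affine segment $t\mapsto u+t\zeta$ the metric is explicit and polynomial in $t$ of degree $\le n$, because $\omega_{u+t\zeta}^n=(\omega_u+t\,i\partial\overline\partial\zeta)^n$; pulling this polynomial back through $F$ (with $F(u+t\zeta)=F(u)+tL\zeta+o(t)$) one upgrades the regularity of $t\mapsto F_*|_{u+t\zeta}$ enough to make sense of, and then verify, the first‑ and second‑variation identities that for a smooth isometry encode preservation of the connection and of $R$. I expect the bulk of the analytic work to sit here.

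Granting the displayed identity, the rest is algebra and borrowed rigidity. On a compact connected symplectic manifold the Poisson algebra is perfect (every function of zero $\omega_u^n$‑mean is a finite sum of Poisson brackets), so the identity says precisely that conjugation by $L$ carries each Hamiltonian vector field $\{f,\cdot\,\}_u$ on $C^\infty(X)$ to a Hamiltonian vector field $\{g,\cdot\,\}_{F(u)}$ on $C^\infty(X')$; applying the same to $F^{-1}$, $\mathrm{Ad}_L$ is a Lie algebra isomorphism between the Hamiltonian vector field algebras of $(X,\omega_u)$ and $(X',\omega_{F(u)})$. By a Pursell--Shanks type rigidity for such algebras (recovering the points of $X$, say, from the maximal subalgebras of vector fields vanishing at a point), there is a $C^\infty$ diffeomorphism $\varphi\colon X'\to X$ with $\varphi^*\omega_u=c\,\omega_{F(u)}$ for a nonzero constant $c$ and with $\mathrm{Ad}_\varphi{}^{*}=\mathrm{Ad}_L$ on this Lie algebra. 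Then $M:=(\varphi^*)^{-1}L$ is a continuous linear automorphism of $C^\infty(X)$ commuting with every Hamiltonian vector field of $(X,\omega_u)$; since these act transitively on $X$ and the induced action on the $\omega_u^n$‑mean‑zero functions admits only scalar intertwiners, $M\xi=a\xi-\beta\int_X\xi\,\omega_u^n$ for constants $a\neq0$, $\beta$, whence $L\xi=a\varphi^*\xi-b\int_X\xi\,\omega_u^n$ with $b=\beta$.

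It remains to pin down $a$, $b$ and the sign. Substituting this expression into $\langle L\xi,L\xi\rangle_{F(u)}=\langle\xi,\xi\rangle_u$ and changing variables by $\varphi$ gives, comparing the coefficients of $\int_X\xi^2\,\omega_u^n$ and of $(\int_X\xi\,\omega_u^n)^2$, a relation fixing $a^2$ in terms of $c$ and the total volumes and the relation $b\bigl(b\int_X\omega_u^n-2a\bigr)=0$, i.e. $b=0$ or $b=2a$; together with the volume comparison $\int_{X'}\varphi^*(\omega_u^n)=c^n\int_{X'}\omega_{F(u)}^n$ this forces $a^2=1$ and $\varphi^*\omega_u=\pm\omega_{F(u)}$ (so $\varphi^*\omega_u^n=\omega_{F(u)}^n$), consistent with the asserted form. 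Finally the Poisson assertion is immediate from this formula: constants are central for any Poisson bracket and $a^2=1$, so $\{F_*\xi,F_*\eta\}_{F(u)}=\{\varphi^*\xi,\varphi^*\eta\}_{F(u)}$; since $\varphi^*\omega_u=\pm\omega_{F(u)}$ this equals $\pm\varphi^*\{\xi,\eta\}_u$; and since $\int_X\{\xi,\eta\}_u\,\omega_u^n=0$ we have $\varphi^*\{\xi,\eta\}_u=a\,F_*\{\xi,\eta\}_u$, hence $\{F_*\xi,F_*\eta\}_{F(u)}=\pm F_*\{\xi,\eta\}_u$.
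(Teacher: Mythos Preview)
Your overall architecture is right: first show that $\Phi=F_*|_{T_u\cU}$ preserves the double Poisson bracket (curvature), then use Lie--algebraic rigidity to produce $\varphi$, then use the $L^2$ isometry to pin down $a,b$. The algebraic second half is close in spirit to the paper, though the paper takes a slightly different path: instead of passing to $\mathrm{Ad}_L$ on Hamiltonian vector fields and invoking a Schur--type intertwiner argument, it first proves (via Cartan's theorem on Lie triple systems, Theorem~4.5/8.1) that preservation of the \emph{triple} bracket forces $\Phi$ itself to be a Lie isomorphism or anti--isomorphism of $(C^\infty(X),\{\,,\,\}_u)$, and then applies Atkin--Grabowski directly to $\pm\Phi$. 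This avoids the extra step of identifying all continuous operators commuting with the Hamiltonian action.

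The genuine gap is your treatment of the ``main obstacle''. Your plan is to exploit that $t\mapsto \omega_{u+t\zeta}^n$ is polynomial to bootstrap the regularity of $t\mapsto F_*|_{u+t\zeta}$ and thereby recover the second--variation identity encoding curvature. But the isometry condition only says that the scalar $t\mapsto\int_X\xi^2\,\omega_{u+t\zeta}^n$ equals $\int_{X'}(F_*|_{u+t\zeta}\xi)^2\,\omega_{F(u+t\zeta)}^n$; on the target side \emph{both} the operator $F_*|_{u+t\zeta}$ and the measure $\omega_{F(u+t\zeta)}^n$ are, a priori, merely continuous in $t$ (the curve $F(u+t\zeta)$ is only $C^1$, and $F_*$ only $C^0$). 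Knowing that their combination is a polynomial in $t$ gives no leverage on either factor separately, and in particular no way to make sense of a second $t$--derivative of $F_*$. The paper explicitly flags that it is unknown whether $C^1$ isometries are $C^2$, so a direct regularity upgrade along these lines is not available.

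The paper's substitute is to extract sectional curvature from a first--order quantity that a $C^1$ isometry manifestly preserves. First, $C^1$ isometries send geodesics to geodesics (Lemma~3.1), because geodesics in $\cH$ minimize energy among $C^1$ curves with fixed endpoints (Chen) and energy minimizers are sent to geodesics (\cite[Cor.~6.4]{L2}). Second, when $\omega_u$ and the plane $P=\mathrm{span}(\xi,\eta)$ are analytic, Cauchy--Kovalevskaya yields a very analytic \emph{exponential surface} $e\colon\Delta\to\cH$ with $e_*T_0\Delta=P$ (Lemma~4.3); $F\circ e$ is again an exponential surface, and by Theorem~1.2 (which in turn rests on the analytic wave--front--set analysis of Section~2) it is very analytic as well. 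The sectional curvature is then read off from the lengths $L_r$ of the image circles via $L_r=2\pi r(1-K(P)r^2/6+o(r^2))$; since $F$ preserves these lengths, $K'(F_*P)=K(P)$ for analytic data, and density finishes the job. None of this requires differentiating $F_*$.
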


The question arises whether $\varphi$ of the theorem depends on $u\in\cU$. Darvas in \cite{Da3} proves that it does not if 
$(X,\omega)=(X',\omega')$ and $F$ is a global isometry $\cH\to\cH$. However, for the local isometries $F$ we construct in 
Section 6 $\varphi$ typically varies with $u$. This can be read off from Theorem 6.6, that provides a formula for $F_*|T_u\cH$. 

Mabuchi pointed out that $\cH$ is a Riemannian product $\bR\times\cH_0$.
On the level of tangent spaces, the splitting is given by 
$$
T_u\cH=\{\xi\colon\xi=\text{ const}\}\oplus \{\xi\colon\int_X\xi\omega_u^n=0\}.
$$
By Theorem 1.1 isometries respect this splitting. Another consequence of Theorem 1.1 is that isometries of Mabuchi's 
metric with $b=0$ preserve all the Orlicz--type Finsler metrics on $\cH$ that Darvas constructed in \cite{Da2}.

\begin{thm}With $u\in\cU$ and $F$ as in Theorem 1.1, if $\omega_u$ is real analytic, then so is $\omega_{F(u)}$, and  $F_*|T_u\cU$ maps real analytic functions to real analytic functions. (Equivalently, the diffeomorphism
$\varphi\colon X'\to X$ of Theorem 1.1 is real analytic.)

\end{thm}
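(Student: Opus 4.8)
The plan is to prove the parenthetical form of the assertion, that $\varphi\colon X'\to X$ is real analytic; the rest follows formally. If $\varphi$ is real analytic then so is $\varphi^{-1}$, hence $\omega_{F(u)}=\pm(\varphi^{-1})^*\omega_u$ is real analytic and, by $F_*\xi=a\varphi^*\xi-b\int_X\xi\,\omega_u^n$, $F_*|T_u\cU$ sends real analytic functions to real analytic ones. Conversely, since $\varphi^*=a\,F_*|T_u\cU+ab\int_X(\cdot)\,\omega_u^n$, it is enough to prove that $F_*|T_u\cU$ preserves real analyticity and then recover $\varphi$: $X$ carries a canonical real analytic structure, so it embeds, via a real analytic $\iota$, into some $\bR^N$ with real analytic coordinate functions $x_1,\dots,x_N$; then $(\varphi^*x_1,\dots,\varphi^*x_N)=\iota\circ\varphi$ is real analytic, and composing with $\iota^{-1}$ and a real analytic retraction of a tubular neighborhood onto $\iota(X)$ exhibits $\varphi$ as real analytic. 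So the theorem reduces to: $F_*|T_u\cU$ sends real analytic functions to real analytic functions.

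Real analyticity of $\omega_u$ is exploited through the local geodesic symmetry. Since $\omega_u$ is real analytic, $u$ has a normal neighborhood $\cV$ and a $C^\infty$ involutive local isometry $\sigma$ of $\cH$ with $\sigma(u)=u$, $\sigma_*|_u=-\id$, and $\sigma(\exp_u(t\xi))=\exp_u(-t\xi)$; moreover, in the constructions of \cite{Se,BCKR}, $\sigma$, the chart $\exp_u$ near $0$, the Mabuchi metric near $u$, and the geodesics from $u$ with real analytic velocity are all real analytic --- in particular $\exp_u$ is real bianalytic in the sense that, for $w\in\cV$, $\omega_w$ is real analytic if and only if $\exp_u^{-1}(w)$ is. As $F$ is a $C^1$ isometry it maps the short minimizing geodesics in $\cV$ to geodesics and so satisfies $F\circ\exp_u=\exp_{F(u)}\circ F_*|_u$ near $0$; hence $\tau:=F\circ\sigma\circ F^{-1}$ is a $C^1$ local geodesic symmetry of $\cH'$ at $v:=F(u)$, that is, $\tau=\exp_v\circ(-\id)\circ\exp_v^{-1}$ near $v$ with $\exp_v$ a $C^1$ local diffeomorphism at $0$, $\tau(v)=v$, $\tau_*|_v=-\id$. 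The same construction performed at any $w\in\cV$ with $\omega_w$ real analytic (using the symmetry at $w$) produces a $C^1$ local geodesic symmetry of $\cH'$ at $F(w)$.

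The heart of the proof --- and the step I expect to be the main obstacle --- is the converse regularity principle: a point $v\in\cH'$ admitting a $C^1$ local geodesic symmetry must have $\omega_v$ real analytic, with $\exp_v$ then real bianalytic near $0$. I would prove this by feeding $\tau$ back into Theorem 1.1 over its whole domain. At $w'=\exp_v\xi$ Theorem 1.1 gives a diffeomorphism $\psi_\xi\colon X'\to X'$ with $\tau_*|_{w'}=a(\xi)\psi_\xi^*-b(\xi)\int_{X'}(\cdot)\,\omega_{w'}^n$; continuity of $w'\mapsto\tau_*|_{w'}$ and the values at $\xi=0$ --- where $\psi_0=\id$, $a(0)=-1$, $b(0)=0$, as a short computation from $\tau_*|_v=-\id$ shows --- force $a(\xi)\equiv-1$, $b(\xi)\equiv0$, so $\tau_*|_{w'}=-\psi_\xi^*$. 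On the other hand $\tau_*|_{w'}=-(\exp_v)_*|_{-\xi}\circ\bigl((\exp_v)_*|_\xi\bigr)^{-1}$, and because the curvature of $\cH'$ is covariantly constant --- with the Mabuchi--Semmes--Donaldson form $R(\cdot,\cdot)\cdot=-\tfrac14\{\{\cdot,\cdot\}_v,\cdot\}_v$ --- the Jacobi operators $(\exp_v)_*|_{\pm\xi}$ are convergent power series in the Poisson-bracket endomorphism $\zeta\mapsto\{\{\zeta,\xi\}_v,\xi\}_v$. Demanding that the resulting explicit operator be $-\psi_\xi^*$ for an honest diffeomorphism of $X'$, for every small $\xi$, should pin down a real analytic local potential for $\omega_v$. (A softer substitute would be the real analytic analogue of \cite[Corollary 5.2]{L2}: isometries are real analytic wherever the metric is.)

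Granting this, the argument closes. For $w\in\cV$ with $\omega_w$ real analytic, the symmetry at $F(w)$ from the second paragraph makes $\omega_{F(w)}$ real analytic; so $F$ carries $\{\,w\in\cV:\omega_w\text{ real analytic}\,\}$ into $\{\,w':\omega_{w'}\text{ real analytic}\,\}$. Given real analytic $\xi\in T_u\cH$, the geodesic $t\mapsto\exp_u(t\xi)$ lies in the former set (by real bianalyticity of $\exp_u$), hence $t\mapsto F(\exp_u(t\xi))=\exp_v(tF_*|_u\xi)$ lies in the latter; by real bianalyticity of $\exp_v$, $tF_*|_u\xi=\exp_v^{-1}\bigl(\exp_v(tF_*|_u\xi)\bigr)$ is real analytic for each small $t$, whence $F_*|_u\xi$ is real analytic. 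Thus $F_*|T_u\cU$ preserves real analyticity, and by the first paragraph $\varphi$ is real analytic, which proves the theorem.
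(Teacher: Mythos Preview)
Your strategy has two genuine gaps that make it unworkable as written.

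First, the exponential map in $\cH$ is not defined on any neighborhood of the origin in $T_u\cH$, even when $\omega_u$ is real analytic. By Theorem~2.2 (and the discussion following it), the set $E_u$ of initial velocities of geodesics through $u$ is dense when $\omega_u$ is analytic, but it is never a neighborhood of $0$: for generic $\xi\in T_u\cH$ no geodesic has $\xi$ as initial velocity. The paper makes this explicit in Section~5: ``In $\cH$ this line of reasoning fails because the exponential map is not defined in a neighborhood of $0\in T_u\cH$.'' So your assertions that $u$ has a normal neighborhood, that $\exp_u$ is a local (bianalytic) diffeomorphism, that $F\circ\exp_u=\exp_{F(u)}\circ F_*|_u$ ``near $0$,'' and that Jacobi operators $(\exp_v)_*|_{\pm\xi}$ exist and are given by convergent series, all fail. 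The symmetries of \cite{Se,BCKR} are indeed local isometries, but they are not obtained by conjugating $-\id$ through an exponential chart.

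Second, and more seriously, your ``converse regularity principle''---that a point admitting a $C^1$ local geodesic symmetry must have real analytic K\"ahler form---is precisely one of the open problems the paper lists at the end of the introduction: ``Suppose $u\in\cH$ is the fixed point of a symmetry $F\colon\cU\to\cH$\dots\ Must $\omega_u$ be analytic?'' Your sketch toward it (varying Theorem~1.1 along $\xi$ and hoping the resulting family of diffeomorphisms forces analyticity) is not a proof, and you yourself flag it as the main obstacle. Since the entire argument hinges on this step, the proposal does not establish the theorem.

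The paper's route is completely different and avoids both issues. It introduces, for each $u$, the set $E_u\subset T_u\cH$ of initial velocities of geodesics through $u$, and for $\xi\in T_u\cH$ the set $E_\xi=\{\eta:\xi\pm\eta\in E_u\}$. Theorem~2.2 shows, via analytic wave front sets and the Monge--Amp\`ere foliation (Theorem~2.4, Lemma~2.5), that $E_u$ is dense iff $\omega_u$ is analytic, and $E_\xi$ is dense iff $\xi$ is analytic (given $\xi\in E_u$). Since a $C^1$ isometry sends geodesics to geodesics (Lemma~3.1), it maps $E_u$ onto $E_{F(u)}$ and $E_\xi$ onto $E_{F_*\xi}$; density is preserved, and Theorem~1.2 follows in a few lines. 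Note also that in the paper Theorem~1.2 is proved \emph{before} Theorem~1.1 and is used in its proof (see Lemma~4.1), so invoking Theorem~1.1 here risks circularity unless you independently secure it.
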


\begin{thm}Suppose $\cU\subset\cH$ is open and connected, $u\in\cU$, and $F,G\colon\cU\to\cH'$ are $C^\infty$ isometries.
If $\omega_u$ is real analytic, $F(u)=G(u)$, and $F_*,G_*$ agree on $T_u\cH$, then $F=G$.
\end{thm}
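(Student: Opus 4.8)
The plan is to reduce to a local rigidity statement valid at any analytic potential, prove it by a geodesic argument, and then propagate it over $\cU$ using density of the analytic potentials. Since $F,G$ are $C^\infty$ diffeomorphisms onto open subsets of $\cH$ and $F(u)=G(u)$, the composition $H:=G^{-1}\circ F$ is a well defined $C^\infty$ isometry of a neighbourhood of $u$ onto a neighbourhood of $u$ in $\cH$, with $H(u)=u$ and $H_*=\id$ on $T_u\cH$. (By Theorem 1.1 the differential $H_*$ at $u$ is already pinned down; the task is to upgrade this to $H=\id$ near $u$.) It therefore suffices to establish the \emph{local claim}: whenever $w\in\cU$, $\omega_w$ is real analytic, $F(w)=G(w)$, and $F_*=G_*$ on $T_w\cH$, then $F=G$ on a neighbourhood of $w$; equivalently, $G^{-1}\circ F$ is the identity near $w$.

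For the local claim I would pass to geodesics. Replacing the reference form $\omega$ by $\omega_w$ (and $\cH(\omega)$ by the isometric $\cH(\omega_w)$ via the translation $v\mapsto v+w$), we may assume $w=0$ and the reference form real analytic. For each real analytic $\xi\in T_0\cH\approx C^\infty(X)$ there is a short real analytic geodesic $\gamma_\xi$ with $\gamma_\xi(0)=0$ and $\dot\gamma_\xi(0)=\xi$: in the Semmes--Donaldson picture a geodesic corresponds to a solution $U$, depending on $s$ only through $\Ree s$, of the homogeneous complex Monge--Amp\`ere equation $(\omega+i\partial\overline\partial U)^{n+1}=0$ on $X\times\{0<\Ree s<1\}$; this equation is affine-linear in the second $\Ree s$-derivative of $U$ with nonvanishing leading coefficient (essentially the determinant of the K\"ahler metric of $\omega_{U(\cdot,t)}$), so $\{\Ree s=0\}$ is non-characteristic and Cauchy--Kovalevskaya furnishes a unique local real analytic solution with the Cauchy data corresponding to $\gamma_\xi(0)=0$, $\dot\gamma_\xi(0)=\xi$ (positivity persists, so $\gamma_\xi(t)\in\cH$ for small $t$). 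Since $H$ preserves the length and energy functionals, it carries geodesics to geodesics; hence for small $t$ the curve $H\circ\gamma_\xi$ is a geodesic with $(H\circ\gamma_\xi)(0)=0$ and $\frac{d}{dt}(H\circ\gamma_\xi)(0)=H_*\xi=\xi$, and by uniqueness of geodesics with prescribed initial position and velocity it coincides with $\gamma_\xi$. Thus $H$ is the identity on the image of every such $\gamma_\xi$. By an inverse-function argument near the constant geodesic $U\equiv 0$, the potentials joinable to $0$ by such a short real analytic geodesic form a neighbourhood of $0$ within the real analytic potentials, which are dense in $\cH$; therefore $H=\id$ on a dense subset of some neighbourhood of $0$, and by continuity $H=\id$ near $0$.

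To globalise, set $\cH^{\mathrm{an}}=\{v\in\cH\colon\omega_v\text{ is real analytic}\}$. A convex combination of potentials has for its K\"ahler form the corresponding convex combination of the individual K\"ahler forms, and a sum of real analytic forms is real analytic, so $\cH^{\mathrm{an}}$ is convex, hence connected; it is also dense in $\cH$. Consequently $\cU\cap\cH^{\mathrm{an}}$ is connected: any path in the open set $\cU$ can be approximated by a piecewise linear path with vertices in $\cH^{\mathrm{an}}$ chosen close enough that every segment stays in $\cU$, hence in $\cU\cap\cH^{\mathrm{an}}$. By Theorem 1.2, $F$ and $G$ map $\cU\cap\cH^{\mathrm{an}}$ into $\cH^{\mathrm{an}}$. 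Let $\cO=\{w\in\cU\cap\cH^{\mathrm{an}}\colon F=G\text{ near }w\}$. It is open and, by the local claim at $u$, nonempty; it is closed in $\cU\cap\cH^{\mathrm{an}}$, because at any $w$ in its closure all derivatives of $F$ and $G$ agree (they agree on $\cO$ and are continuous), so in particular $F(w)=G(w)$ and $F_*=G_*$ on $T_w\cH$, whence the local claim applies at $w$. By connectedness $\cO=\cU\cap\cH^{\mathrm{an}}$, and since this set is dense in $\cU$ and $F,G$ are continuous, $F=G$ on $\cU$.

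The main obstacle should be the uniqueness of geodesics with prescribed initial position and velocity used in the local claim. Smooth geodesics in $\cH$ need not exist in general, which is exactly why the real analyticity of $\omega_w$ is invoked, to produce $\gamma_\xi$ via Cauchy--Kovalevskaya; but the competitor $H\circ\gamma_\xi$ is a priori only $C^\infty$. One route is to use Theorem 1.2 along $\gamma_\xi$ to see that $H\circ\gamma_\xi$ runs through real analytic potentials and is itself a real analytic geodesic, so that Cauchy--Kovalevskaya uniqueness applies; another is to verify that the difference of two solutions of the geodesic equation $\ddot u=\tfrac14|\partial\dot u|_{\omega_u}^2$ with identical Cauchy data satisfies a linear second order equation of (degenerate) hyperbolic type with smooth coefficients and vanishing Cauchy data, hence vanishes near $t=0$. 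Making either argument rigorous in this weak Riemannian, genuinely degenerate setting is where the real work lies.
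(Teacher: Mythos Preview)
Your argument has a genuine gap at the step ``by an inverse-function argument near the constant geodesic $U\equiv 0$, the potentials joinable to $0$ by such a short real analytic geodesic form a neighbourhood of $0$ within the real analytic potentials''. This is exactly the step that does \emph{not} go through in $\cH$, and the paper says so explicitly at the opening of Section~5: even restricting to the space $\cK$ of analytic potentials with its natural inductive-limit topology, there is no reason why the exponential map should be a local diffeomorphism. The map $\xi\mapsto\gamma_\xi(1)$ has derivative the identity at $0$, but $C^{\an}(X)$ is not a Banach space, and Cauchy--Kovalevskaya incurs loss of analyticity (the domain of holomorphic extension shrinks along the geodesic), so no inverse function theorem is available. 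You therefore cannot conclude that the geodesic endpoints fill out, or are even dense in, a neighbourhood of $0$---and without that, knowing $H=\id$ along each $\gamma_\xi$ says nothing about a full neighbourhood.

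Ironically, the obstacle you flag as the main one---uniqueness of geodesics with prescribed initial data---is not a problem: it holds for smooth geodesics by Bedford--Burns and Rubinstein--Zelditch, and the paper invokes precisely those references elsewhere.

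The paper circumvents the surjectivity issue by abandoning geodesics altogether in favour of very analytic curves whose existence is trivial: the straight segments $\tau\mapsto(1-\tau)u_0+\tau u_1$ with $u_1$ analytic, which obviously sweep out a dense subset of a convex neighbourhood of $u_0$. The price is that rigidity can no longer come from the geodesic equation. Instead the paper introduces the \emph{tempo} of a curve---its velocity parallel-transported back to $T_{u_0}\cH$ via the explicit formula $\theta_\tau=(du_\tau/d\tau)\circ\varphi_\tau$---and proves (Lemma~5.1) that a very analytic curve is determined by its initial point and its tempo, by recursively recovering all $\tau$-jets at $\tau=0$. Since parallel transport commutes with $C^2$ isometries, and since $C^\infty$ isometries send very analytic curves to very analytic curves (Theorem~3.3), the curve $F^{-1}\circ G\circ u$ has the same initial point and tempo as $u$, hence equals it. Your globalisation via connectedness of $\cU\cap\cH^{\an}$ is fine and in fact more explicit than what the paper writes.
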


This theorem, unlike the previous two, applies only to $C^\infty$ isometries. It could be extended to $C^1$ isometries if Theorem 3.3 further down could be extended; or if Lemma 5.1 could be proved for $C^1$ curves (or even only for $C^\infty$ curves).

\begin{thm}Let $u\in\cH$, $u'\in\cH'$ be such that $\omega_u$, $\omega_{u'}$ are real analytic, and let $\Phi\colon T_u\cH\to T_{u'}\cH'$ be an
isomorphism of vector spaces.
Suppose that $|\Phi\xi|_{u'}=|\xi|_u$ for all $\xi\in T_u\cH$, 
$$
\{\Phi\xi,\Phi\eta\}_{u'}=\pm \Phi\{\xi,\eta\}_u\qquad\text{for all }\xi,\eta\in T_u\cH\approx C^\infty(X),
$$
and $\Phi$ maps real analytic functions in $C^\infty(X)$ to real analytic functions in $C^\infty(X')$.
Then there is a $C^\infty$ isometry $F\colon\cU\to\cH'$ of some neighborhood $\cU$ of $u$ such that $F(u)=u'$ and $F_*|T_u\cU=\Phi$.
This $\cU$ can be chosen a neighborhood in the $C^2$ topology.
\end{thm}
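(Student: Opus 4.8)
The plan is to build $F$ by conjugating $\Phi$ with the exponential maps of $\cH$ and $\cH'$, and to recognize the result as an isometry through the Cartan--Ambrose--Hicks comparison, which is available here because the curvature of Mabuchi's metric is covariantly constant. Two inputs set this up. First, since $\omega_u$ is real analytic, the short geodesics issuing from $u$ stay in $\cH$ and are cut out by the real analytic solution of the homogeneous complex Monge--Amp\`ere equation (cf.\ \cite{Se} and the geodesic analysis in \cite{L2}), so that $\exp_u$ is a $C^\infty$ diffeomorphism of a neighborhood of $0\in T_u\cH$ onto a neighborhood of $u$ --- one that may be taken open in the $C^2$ topology; likewise for $\exp_{u'}$, as $\omega_{u'}$ is real analytic. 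Second, the curvature tensor of Mabuchi's metric at $u$ is $R_u(\xi,\eta)\zeta=-\tfrac14\{\{\xi,\eta\}_u,\zeta\}_u$ (\cite{Do1,M,Se}). Hence from $|\Phi\xi|_{u'}=|\xi|_u$ --- which by polarization makes $\Phi$ a linear isometry of the Euclidean spaces $(T_u\cH,\langle\ ,\ \rangle_u)$ and $(T_{u'}\cH',\langle\ ,\ \rangle_{u'})$ --- together with $\{\Phi\xi,\Phi\eta\}_{u'}=\pm\Phi\{\xi,\eta\}_u$, a one-line computation in which the sign $\pm$ squares away gives $R_{u'}(\Phi\xi,\Phi\eta)\Phi\zeta=\Phi\,R_u(\xi,\eta)\zeta$; and since $\nabla R=0$ on both manifolds, $\Phi$ trivially intertwines all covariant derivatives of the curvature as well.

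With this I would set $F=\exp_{u'}\circ\,\Phi\circ\exp_u^{-1}$ on a small enough neighborhood $\cU$ of $u$. For this to be well defined and $C^\infty$ one needs $\Phi$ continuous: a closed-graph argument using the identity $|\Phi\xi-\Phi\eta|_{u'}=|\xi-\eta|_u$ (which forces $\Phi\xi_n\to\Phi\xi$ in $L^2$ whenever $\xi_n\to\xi$) already shows $\Phi$ is continuous on $C^\infty$, while for the sharper statement that $\cU$ may be chosen $C^2$-open one invokes the linear part of the proof of Theorems 1.1 and 1.2, which from the present hypotheses yields $\Phi\xi=a\varphi^*\xi-b\int_X\xi\,\omega_u^n$ for a real analytic diffeomorphism $\varphi\colon X'\to X$ with $\varphi^*\omega_u=\pm\omega_{u'}$ and in particular shows $\Phi$ is $C^2$-continuous and carries $\cH$-geodesic initial data to $\cH'$-geodesic initial data of the same regularity. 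After shrinking $\cU$ so that $\Phi(\exp_u^{-1}(\cU))$ lies in the domain of $\exp_{u'}$, the map $F$ is a $C^\infty$ diffeomorphism of $\cU$ onto an open subset of $\cH'$; and since $\exp_u(0)=u$, $\Phi(0)=0$, $\exp_{u'}(0)=u'$ while the differential of an exponential map at the origin is the identity, one reads off $F(u)=u'$ and $F_*|T_u\cU=\Phi$ at no further cost.

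It then remains to verify $|F_*\xi|_{F(w)}=|\xi|_w$ for every $w\in\cU$, and this is the step I expect to carry the real weight. Fixing $w=\exp_u(\xi_0)$ and the radial geodesic $\gamma(t)=\exp_u(t\xi_0)$, its $F$-image is $\gamma'(t)=\exp_{u'}(t\Phi\xi_0)$, a geodesic run at the same speed because $\Phi$ is a linear isometry. Choose parallel frames along $\gamma$ and $\gamma'$ agreeing via $\Phi$ at $t=0$; because $R$ is parallel and $\Phi$ intertwines it, the Jacobi equation $J''+R(J,\dot\gamma)\dot\gamma=0$ along $\gamma$ --- with $R(J,\dot\gamma)\dot\gamma=-\tfrac14\{\{J,\dot\gamma\},\dot\gamma\}$, a first-order linear operator in $J$ whose well-posedness along analytic geodesics belongs to the geodesic theory one must set up --- has, in these frames, the same coefficients as the corresponding equation along $\gamma'$. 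Consequently parallel transport and Jacobi fields correspond under $\Phi$, so that $F_*$ at $\gamma(1)=w$, read through the chosen identifications, is again $\Phi$, and in particular preserves length; as $w\in\cU$ was arbitrary, $F$ is the desired isometry, and $\cU$ is $C^2$-open by the first step. The main obstacle is precisely running this Cartan--Ambrose--Hicks argument in the Fr\'echet category: one must make sure that $\gamma'$, the parallel frames and the Jacobi fields genuinely live in $\cH'$ and not merely in some H\"older or Sobolev completion, and that the governing linear ODEs are solvable there --- and it is here that the real analyticity of $\omega_{u'}$ and the hypothesis that $\Phi$ preserves real analyticity are indispensable, guaranteeing that $\Phi$ transports the analytic objects constructed over $\gamma$ to honest analytic objects over $\gamma'$. (Alternatively, $F$ may be produced concretely by adapting the generalized Legendre transform of \cite{BCKR}, which realizes these transports explicitly.)
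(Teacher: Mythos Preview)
Your strategy rests on the assertion that, when $\omega_u$ is real analytic, the exponential map $\exp_u$ is a $C^\infty$ diffeomorphism of a neighborhood of $0\in T_u\cH\approx C^\infty(X)$ onto a neighborhood of $u$. This is false, and it is precisely the obstruction that forces the paper to take an entirely different route. The geodesic equation in $\cH$ is equivalent to an initial value problem for the homogeneous complex Monge--Amp\`ere equation, which is ill--posed: even when $\omega_u$ is analytic, short geodesics with initial velocity $\xi\in T_u\cH$ exist only for $\xi$ in a set $E_u$ that is dense in $C^\infty(X)$ but never contains an open set (Theorem 2.2 and the remarks following Theorem 2.4; Rubinstein--Zelditch show $T_u\cH\setminus E_u$ is dense as well). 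The Cauchy--Kovalevskaya argument gives geodesics for \emph{analytic} $\xi$, not smooth ones. So $\exp_u^{-1}$ is not defined on any neighborhood of $u$ in $\cH$, and $F=\exp_{u'}\circ\Phi\circ\exp_u^{-1}$ cannot be built this way. The paper flags this explicitly in Section~5: ``In $\cH$ this line of reasoning fails because the exponential map is not defined in a neighborhood of $0\in T_u\cH$,'' and adds that even restricting to analytic potentials gives no reason for $\exp$ to be a local diffeomorphism.

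The same ill--posedness undermines your Jacobi field step: the equation $J''+R(J,\dot\gamma)\dot\gamma=0$ has $R(J,\dot\gamma)\dot\gamma=-\tfrac14\{\{J,\dot\gamma\},\dot\gamma\}$, which is a first--order differential operator in the $X$ variables, so the ``linear ODE'' is in fact a linear PDE with the same loss--of--derivative structure as the geodesic equation itself; there is no well--posedness theory to invoke. The paper's construction (Section~6) avoids exponentials and Jacobi fields altogether. It uses the Atkin--Grabowski theorem to reduce $\Phi$ to $\varphi^*$ for an analytic diffeomorphism $\varphi$, then complexifies $\varphi$ to a holomorphic symplectomorphism $\Theta$ between neighborhoods of the zero sections in the Semmes--Donaldson deformed cotangent bundles $X(\omega)$, $X'(\omega')$, and finally defines $F$ by the explicit integral formula (6.10). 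Theorem~6.6 then verifies directly, via (6.11) and the formula $F_*\xi=\xi\circ\theta_u^{-1}$, that $F$ is an isometry on a $C^2$ neighborhood. Your parenthetical pointer to the generalized Legendre transform is the right instinct: that is the kind of concrete construction needed, and the paper carries it out in full.
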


In light of Theorems 1.1, 1.2 the sufficient conditions on $\Phi$ are also necessary.

Semmes in \cite{Se} produced related transformations when $(X',\omega')=(X,\omega)$.
One difference is that he transforms closed 1--forms rather than functions.
His transformations can be used to transform K\"ahler forms, and to produce  isometries of K\"ahler potentials through a Riemannian splitting $\cH=\bR\times \{$K\"ahler forms $\sim\omega\}$.
Instead of going that route, though, we will construct the isometries $F$ directly from the transformations we introduced in \cite{L1}.

The prominent role that analytic K\"ahler forms play in the theorems above suggests to consider the subspaces of $\cH,\cH'$ consisting of relative
potentials of such forms.
In any K\"ahler class analytic K\"ahler forms are dense, see Proposition 2.1.
If we let 
$$
\cK=\cK(\omega)=\{u\in\cH\colon\omega_u\text{ is analytic}\}
$$
and similarly $\cK'\subset\cH'$, then the theorems above hold with $\cK,\cK'$ replacing $\cH,\cH'$.
The analytic version of Theorem 1.2 is tautological; we will discuss the analytic versions of the rest in section 7.

The initial idea of this work is that isometries map geodesics to geodesics. This is quite obvious, on formal grounds, for $C^2$ isometries, but for $C^1$ isometries it lies deeper, see Lemma 3.1. We then show that various notions in $\cH$ can be expressed in terms of geodesics: curvature, parallel transport, and somewhat surprisingly, regularity of maps into $\cH$. We deal with these issues in sections 4, 5, and 2, 3, respectively. Putting all this together results in enough structure for isometries to prove Theorems 1.1, 1.2, and 1.3, in sections 4, 3, and 5. Theorem 1.4 is proved in section 6, along different lines.

It would be interesting to clarify which among the local isometries extend to global isometries $\cH\to\cH'$. We conjecture 
that this happens only very exceptionally.\footnote{Darvas has recently answered this question when $\cH=\cH'$ 
in \cite{Da3}.} 
But an even more interesting question is, what are the isometries between the metric completions of $\cH,\cH'$ that Darvas constructed in \cite{Da1}? The results we formulated raise a few more natural questions. 
Must $C^1$ isometries be automatically $C^\infty$? Does uniqueness, Theorem 1.3, hold at non--analytic $\omega_u$? Suppose $u\in\cH$ is the fixed point of a symmetry $F\colon \cU\to\cH$, i.e., of an isometry such that $F_*|T_u\cH=-\text{id}$. Must $\omega_u$ be analytic?\footnote{In the meantime we proved that this is so, see \cite{L3}.}

In our proofs we will use infinite dimensional variants of basic facts of Riemannian geometry.
We invite the reader who is reluctant to accept these facts on faith to refer to \cite{L2}, where we collected background material in infinite
dimensional Riemannian geometry.

A note on notation: if $M$ is a manifold, $C^k(M)$ stands for the space of real valued $C^k$ functions, except in parts of section 2, where the
reader will be warned.
If $M$ is a real analytic manifold, we write $C^{\an}(M)$ for the space of real analytic functions $M\to\bR$. In general, $C^k(M,N)$ stand for spaces of maps between manifolds $M,N$.

The paper incorporates suggestions that referees have made, and a correction in the calculation of $b$ in Theorem 1.1 that Ming-Chen Xia brought to my attention.

\section{Analyticity}

In this section, after proving that analytic K\"ahler forms are dense in K\"ahler classes, we turn to our main technique of recognizing analyticity by connecting it with properties of geodesics.
The proofs of Theorems 1.1, 1.2, and 1.3 will be based on this technique.

\begin{prop}Potentials $u\in\cH$ with $\omega_u$ analytic are dense in $\cH$.
\end{prop}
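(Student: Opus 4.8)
The plan is to approximate $\omega_u$ in the $C^\infty$ topology on $X$ by real analytic Kähler forms in the same class, i.e.\ of the form $\omega_{u_j}$ with $u_j\to u$ in $C^\infty(X)$. The starting point is the observation that $C^{\an}(X)$ is dense in $C^\infty(X)$: this can be obtained either from the Whitney approximation theorem (embed $X$ real-analytically into some $\bR^N$, extend a given smooth function to a neighborhood, and mollify, noting that mollification of a smooth function produces a real analytic function), or, more intrinsically, by running the heat flow $e^{t\Delta}$ for small $t$ with respect to some fixed real analytic Riemannian metric on $X$, since the heat kernel is real analytic for $t>0$ and $e^{t\Delta}f\to f$ in $C^\infty$ as $t\to0^+$. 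Either way we fix, for a given $u\in\cH$, a sequence $w_j\in C^{\an}(X)$ with $w_j\to u$ in $C^\infty(X)$.

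The difficulty is that $\omega_{w_j}=\omega+i\partial\opartial w_j$ need not be positive, and more importantly $\omega$ itself need not be real analytic, so $\omega_{w_j}$ need not be real analytic even though $w_j$ is. To handle the second point I would first reduce to the case where the \emph{background} form $\omega$ has been replaced by a real analytic Kähler form in the same class lying close to $\omega$: choose a smooth function $h$ on $X$ with $h$ small in $C^\infty$ such that $\omega+i\partial\opartial h$ is real analytic. Concretely, pick any real analytic Kähler form $\tilde\omega$ in the class $[\omega]$ (these exist—e.g.\ if $X$ is projective, pull back a Fubini--Study form, and in general one can use that $X$ carries \emph{some} real analytic Kähler metric and adjust the class, or invoke that a small real analytic perturbation of $\omega$ stays Kähler and one may choose it within the class by the $\partial\opartial$-lemma), so that $\tilde\omega=\omega+i\partial\opartial h$ for a smooth $h$; by scaling $\tilde\omega$ toward $\omega$ within the class we may take $h$ as small as we like in $C^\infty$. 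Then $u-h\in\cH(\tilde\omega)$ is close to $u$, and it suffices to approximate $u-h$ by potentials $v$ with $\tilde\omega+i\partial\opartial v$ real analytic; adding back $h$ gives $\omega_{v+h}=\tilde\omega+i\partial\opartial v$ real analytic with $v+h$ close to $u$. So without loss of generality $\omega$ itself is real analytic.

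Once $\omega$ is real analytic, the first point is routine: if $w_j\to u$ in $C^\infty(X)$ then $i\partial\opartial w_j\to i\partial\opartial u$ in $C^\infty$, hence $\omega_{w_j}=\omega+i\partial\opartial w_j\to\omega_u>0$ uniformly, so $\omega_{w_j}>0$ for all large $j$; and $\omega_{w_j}$ is real analytic because $\omega$ and $w_j$ are. Thus $w_j\in\cK(\omega)$ for large $j$ and $w_j\to u$, proving density. The step I expect to be the genuine obstacle is producing the real analytic background form $\tilde\omega$ arbitrarily close to $\omega$ within its Kähler class on an arbitrary compact Kähler manifold; I would either cite the standard fact that $(X,\omega)$ admits a real analytic Kähler metric after a $C^\infty$-small perturbation together with the $\partial\opartial$-lemma to keep the class fixed, or argue directly that $\omega$ can be written, locally, with real analytic local potentials whose globalization error can be corrected—but the cleanest route is the perturbative one, and I would present it as above.
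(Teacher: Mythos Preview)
Your overall architecture matches the paper's: first produce a real analytic closed $(1,1)$-form $\theta$ cohomologous to $\omega$, then use density of $C^{\an}(X)$ in $C^\infty(X)$. The second step you handle correctly. Incidentally, you do not need $h$ small: if $\theta=\omega+i\partial\opartial h$ is analytic and you approximate $u-h$ by analytic $v_j$, then $\omega_{v_j+h}=\theta+i\partial\opartial v_j$ is already analytic and $v_j+h\to u$. Your ``scaling $\tilde\omega$ toward $\omega$ within the class'' remark is both unnecessary and wrong, since a nontrivial convex combination of $\omega$ and $\tilde\omega$ is not real analytic unless $\omega$ is.

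The genuine gap is exactly where you flag it, and your proposed fixes do not close it. The projective case is too special. ``$X$ carries some real analytic K\"ahler metric, now adjust the class'' is the entire problem restated. The perturbative route---approximate $\omega$ by a nearby real analytic K\"ahler form $\omega'$ and invoke the $\partial\opartial$-lemma to restore the class---does not preserve analyticity as stated: if $[\omega']\neq[\omega]$, the $\partial\opartial$-lemma does not apply, and you would instead need a real analytic closed $(1,1)$-form representing $[\omega]-[\omega']$. That can be arranged (harmonic representatives for a real analytic metric are real analytic), but you did not say so, and this is the substantive content.

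The paper carries out precisely the idea you mention last and then abandon. Choose local smooth potentials $p_V$ with $i\partial\opartial p_V=\omega|V$. The transition functions $p_V-p_W$ are pluriharmonic, hence real analytic, so they form a $1$-cocycle in the sheaf of real analytic functions. By Cartan's theorem (together with Grauert's real analytic embedding to get a Stein complexification) this cocycle is a coboundary: there exist real analytic $q_V$ with $q_V-q_W=p_V-p_W$. Then $r:=p_V-q_V$ glues to a global smooth function, and $\theta:=\omega-i\partial\opartial r$ is real analytic because locally $\theta=i\partial\opartial q_V$. One does not even need $\theta>0$: approximating $r+u$ by real analytic $r+u_k$ gives $\omega_{u_k}=\theta+i\partial\opartial(r+u_k)\to\omega_u>0$, whence $u_k\in\cK$ for large $k$.
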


\begin{proof}This will follow from Theorem 1B of H. Cartan and Grauert's embedding theorem. Consider a real 
analytic manifold $Y$ that can be analytically embedded in a complex manifold $Z$ as a totally real submanifold, and let
$\cA\to Y$ be the sheaf of germs of real analytic functions on $Y$. According to [CH, Th\'eor\`eme 1B] the sheaf cohomology 
groups $H^\nu(Y,\cA)$ vanish for $\nu=1,2,\dots$, provided the image of $Y$ in $Z$ has a Stein neighborhood basis. 
Subsequently Grauert proved in [G] that any real analytic manifold $Y$ satisfies Cartan's assumptions.

To prove the proposition, choose an open cover $\fV$ of $X$ and $p_V\in C^\infty(V)$ for $V\in\fV$ such that 
$i\partial\opartial p_V=\omega|V$.
Then $p_V-p_W$ are pluriharmonic, hence analytic on $V\cap W$. We apply H.~Cartan's vanishing theorem with
$Y$ the real analytic manifold underlying $X$. It is a simple general fact of sheaf theory that the \v Cech cohomology group
$H^1(\fV,\cA)$ of the cover $\fV$ embeds into $H^1(Y,\cA)$ (this for any sheaf of Abelian groups over any topological 
space). Hence $H^1(\fV,\cA)=0$ and there are real valued analytic $q_V$ on $V\in\fV$ such that $p_V-p_W=q_V-q_W$.
This implies that there is an $r\in C^\infty(X)$ with $q_V=p_V-r|V$.
The form $\theta=\omega-i\partial\opartial r$ is analytic, since on $V\in\fV$ it agrees with $i\partial\opartial q_V$.
Given $u\in\cH$, we can approximate $r+u$ in $C^\infty(X)$ by a sequence $r+u_k\in C^{\an}(X)$. Indeed, again by 
\cite{G}, we can assume $X$ real analytically embedded in some $\bR^m$. We extend $r+u$ to a function in 
$C^\infty(\bR^m)$, and use the fact that polynomials are dense in $C^\infty(\bR^m)|B$ for any closed ball $B\subset\bR^m$.
For large $k$ then $\omega_{u_k}=\theta+i\partial\opartial (r+u_k)>0$ are analytic, and $u_k\to u$ in $\cH$.
\end{proof}

The rest of this section expounds on the idea that analyticity properties of a K\"ahler form $\omega_u$ and of $\xi\in T_u\cH$ can be detected by considering how they relate to geodesics.
If $u\in\cH$, let $E_u\subset T_u\cH$ denote the set of tangent vectors $\xi\in T_u\cH$ for which there are $\var>0$ and a geodesic $f\colon (-\var,\var)\to\cH$ such that $f(0)=u$ and $df/dt|_{t=0}=\xi$. Further, if $\xi\in T_u\cH$, let 
$$
E_\xi=\{\eta\in T_u\cH\colon \xi+\eta,\ \xi-\eta\in E_u\}.
$$
Theorem 1.2 will be an easy consequence of the following.

\begin{thm}Let $u\in\cH$ and $\xi\in T_u\cH$. If $\omega_u,\xi$ are analytic, then $E_u, E_\xi\subset T_u\cH$ are dense.
If $\omega_u$ is not analytic, then $E_u$ is contained in a proper closed subspace of $T_u\cH$.
Similarly, if $\xi\in E_u$ is not analytic, then $E_\xi$ is contained in a proper closed subspace of $T_u\cH$.
\end{thm}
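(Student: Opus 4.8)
The argument has a constructive half (analytic data produces geodesics) and a rigidity half (geodesics force analyticity), and I would organize it accordingly.

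\emph{$\omega_u,\xi$ analytic.} I would prove $C^{\an}(X)\subset E_u$. By density of $C^{\an}(X)$ in $C^\infty(X)$ (established in the proof of Proposition 2.1) this makes $E_u$ dense, and then, $\xi$ being analytic, every $\eta\in C^{\an}(X)$ has $\xi\pm\eta\in C^{\an}(X)\subset E_u$, so $C^{\an}(X)\subset E_\xi$ is dense as well. To construct the geodesics, first reduce to $\omega$ analytic: the proof of Proposition 2.1 supplies $r\in C^\infty(X)$ and an analytic form $\theta=\omega-i\partial\opartial r$, and $v\mapsto v+r$ is an isometry $\cH(\omega)\to\cH(\theta)$ taking geodesics to geodesics, so the sets $E_u$ are unchanged; with $\omega$ analytic, analytic elliptic regularity applied to $i\partial\opartial u=\omega_u-\omega$ makes $u$ analytic. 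The Mabuchi geodesic equation, written in local holomorphic coordinates and solved for $\partial_t^2 f$, has the form $\partial_t^2 f=\Psi$, where $\Psi$ depends on $x$, on the second space derivatives of $f$, and on the first space derivatives of $\partial_t f$, and is real analytic in these arguments wherever $\omega_f>0$ (the coefficients being analytic because $\omega$ is, the only non‑polynomial ingredient being the inverse of the positive Hermitian matrix $(g_{j\bar k}+\partial_j\partial_{\bar k}f)$). It is thus of Kovalevskaya type in $t$: with analytic Cauchy data $f|_{t=0}=u$, $\partial_t f|_{t=0}=\xi$, the Cauchy--Kovalevskaya theorem yields in each chart a real analytic solution for $|t|<\var$; by uniqueness these patch to an analytic $f$ on $(-\var,\var)\times X$, which after shrinking $\var$ (so $\omega_{f(t)}>0$) is a geodesic through $u$ with velocity $\xi$. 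Hence $\xi\in E_u$.

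\emph{$\omega_u$ not analytic.} Here the mechanism is that a geodesic issuing from $u$ secretly complexifies the pair $(\omega_u,\xi)$. Let $\xi\in E_u$ with geodesic $f\colon(-\var,\var)\to\cH$, $f(0)=u$, $df/dt|_0=\xi$. By Semmes' correspondence \cite{Se}, $f$ is encoded by a smooth solution $\Phi$ of the homogeneous complex Monge--Amp\`ere equation on $X\times S$, $S\subset\bC$ a strip, translation invariant in $\mathrm{Im}\,\tau$, with $\Phi(\cdot,\tau)=f(\mathrm{Re}\,\tau)$; since $\omega_{f(t)}>0$ throughout, the Monge--Amp\`ere foliation of $\Phi$ is smooth and transverse to the $X$-slices, so its leaves are graphs $\tau\mapsto(\gamma_x(\tau),\tau)$ of holomorphic $\gamma_x\colon S\to X$ with $\gamma_x(0)=x$. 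Following the transformations of \cite{L1}, this displays the flow of the Hamiltonian vector field $\sgru_{\omega_u}\xi$ as the real slice $\mathrm{Im}\,\tau=0$ of a family of self-maps of $X$ depending holomorphically on $\tau$, so that $\sgru_{\omega_u}\xi$ extends to a holomorphic vector field on a fixed complex (Grauert-tube) neighbourhood of $X$. Therefore $E_u$ is contained in the linear subspace $V$ of $\xi\in C^\infty(X)$ enjoying this extension property. It then remains to see that $\overline V$ is proper: $V=C^\infty(X)$ would make all of $\omega_u^{-1}$, hence $\omega_u$, analytic, so $V$ is proper; and one produces, from a point of $\wf(\omega_u)$, a nonzero distribution $T$ on $X$ with $\langle T,\xi\rangle=0$ for $\xi\in V$ (heuristically, $T$ probes the analytic singularity of $\omega_u^{-1}$ that the extension of $\sgru_{\omega_u}\xi$ forbids), so $E_u\subset\overline V\subset\Ker T$.

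\emph{$\xi\in E_u$ not analytic, and the obstacle.} By the previous paragraph $\sgru_{\omega_u}\xi$ still extends holomorphically; since $\xi$ does not, the relation $d\xi=\pm\iota_{\sgru_{\omega_u}\xi}\omega_u$ forces $\omega_u$ not analytic, whence the space $V$ above is proper. As $\sgru_{\omega_u}$ is linear in $\xi$, for $\eta\in E_\xi$ both $\sgru_{\omega_u}(\xi\pm\eta)$ extend, hence so does $\sgru_{\omega_u}\eta$, i.e.\ $\eta\in V$; thus $E_\xi\subset\overline V$. The crux of the whole proof is the middle step of the second part: turning ``a smooth geodesic through $u$ exists'' into a genuine holomorphic extension of $\sgru_{\omega_u}\xi$ on a \emph{fixed} complex neighbourhood, and organizing the resulting constraint into a proper closed subspace (equivalently, constructing the annihilating distribution $T$). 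This is precisely why one must route through the global Monge--Amp\`ere foliation and the constructions of \cite{L1}, rather than through the formal Taylor expansion of $f$ in $t$ --- whose coefficients are algebraically determined by $\xi$ and $\omega_u$ but which need not converge when $\omega_u$ is merely smooth.
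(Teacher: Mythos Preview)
Your treatment of the analytic case is correct and matches the paper: Cauchy--Kovalevskaya applied to the geodesic equation, after reducing to analytic $\omega$, shows $C^{\an}(X)\subset E_u$ and hence $C^{\an}(X)\subset E_\xi$ when $\xi$ is analytic.

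The non-analytic half, however, has a real gap. You correctly locate the Monge--Amp\`ere foliation as the relevant structure, but the inference ``the leaves are Riemann surfaces $\Rightarrow$ $\sgru_{\omega_u}\xi$ extends holomorphically to a fixed Grauert tube of $X$'' is not justified and, as stated, is false. The leaves live in $S\times X$ and are holomorphic in the \emph{strip variable} $\tau$; following them gives a family $\gamma_x\colon S\to X$ holomorphic in $\tau$ but only smooth in $x$. That is a complexification of time, not of $X$: nothing here produces a holomorphic vector field on a complex neighbourhood of $X$ inside $X^{\bC}$. Consequently your space $V$ is not shown to contain $E_u$, and the construction of the annihilating distribution $T$ remains a heuristic (``$T$ probes the analytic singularity'') rather than an argument. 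Your third paragraph inherits this problem, and has an additional issue: you deduce ``$\omega_u$ not analytic'' from ``$\xi\in E_u$ not analytic'', but this implication is itself what you would need to prove, and in the case $\omega_u$ analytic your space $V$ would have dense closure, so $E_\xi\subset\overline V$ gives nothing.

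The paper resolves this by working microlocally. The key result (Theorem 2.4) is that for a geodesic with $f(0)=u$, $\dot f(0)=\xi$, any covector $a\in(\wf u\cup\wf\xi)\cap T_x^*X$ must satisfy $\langle a,\sgru_u\xi(x)\rangle=0$. This is proved by showing (Lemma 2.5) that if a function on a real hypersurface $H$ extends harmonically along a transverse holomorphic foliation, then its analytic wave front set at a point is annihilated by the tangent line to the leaf; the proof is a contour-shift estimate on the FBI transform. Applied to the Monge--Amp\`ere potential $\tilde w$ and to $\partial\tilde w/\partial s$ (harmonic, resp.\ holomorphic, along the foliation), with $H=\bR\times X$ and the leaf tangent at $0$ spanned by $4\,\Ree\partial_s-\sgru_u\xi(0)$, this yields Theorem 2.4. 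Once you have it, the closed hyperplane is explicit: pick $a\in\wf u$ (or $a\in\wf\xi$) and use the continuous linear functional $\eta\mapsto\langle a,\sgru_u\eta(x)\rangle$. The $E_\xi$ case needs the extra trick of splitting $\xi=\tfrac12\big((\xi+\eta)+(\xi-\eta)\big)$ to conclude $a\notin\wf\xi$ from $a\notin\wf(\xi\pm\eta)$. So the missing idea is precisely the analytic wave front set: it converts ``harmonic along the Monge--Amp\`ere leaves'' into a pointwise linear constraint on $\sgru_u\xi$, which is what your $T$ was groping for.
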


In \cite{M} Mabuchi already noted that the statement  concerning analytic $u,\xi$ follows from the Cauchy--Kovalevskaya theorem. 
Indeed, geodesics $f\colon (a,b)\to\cH$ are characterized by the equation
\begin{equation*}
{d^2f\over dt^2}={1\over 2}\Big|\text{grad } {df\over dt}\Big|^2,
\end{equation*}
where on $X$ grad and $|\quad|$ are taken with respect to $\omega_{f(t)}$.
 Thus the geodesic equation  has local solutions with $f(0)=u$, $df/dt|_{t=0}=\xi$ or $\xi\pm\eta$, if $u$, $\xi,\eta$ are analytic;   and analytic $\xi,\eta\in C^\infty(X)\approx T_u\cH$ 
are dense.---Incidentally, Rubinstein and Zelditch in \cite{RZ} prove a result that implies that  $T_u\cH\setminus E_u\subset T_u\cH$ is also dense.

What is new in Theorem 2.2 is the constraint on $E_u$, $E_\xi$ when $\omega_ u,\xi$ are not analytic. Assuming $\omega$ itself is analytic, the constraint on, say, $E_u$ is determined by the analytic wave front set of $u$.
The connection between wave front sets and geodesics is as follows.
Geodesics in $\cH$ give rise to solutions of a homogeneous complex Monge--Amp\`ere equation.
These solutions are known to have certain analyticity properties, and analytic wave front sets were designed  to detect precisely these properties.

Wave front sets are part of microlocal analysis, but we will only need the rudiments of the theory.
Here is a quick review.
If $Y$ is a finite dimensional real analytic manifold and $v$ a continuous function (or distribution or hyperfunction) on it, the analytic wave front set $\text{WF}_{\text A} v$ is a subset of $T^*Y\setminus\text{zero section}$.
Write $\pi$ for the projection $T^* Y\to Y$.
Whether $\alpha\in\text{WF}_\text{A} v$ or not depends only on the behavior of $v$ near $\pi\alpha$.
For example, $v$ is analytic in a neighborhood of $y\in Y$ if and only if $y\not\in \pi\text{WF}_\text{A}v$, see [H, Theorem 8.4.5 or Sj, Th\'eor\`eme 6.3].
Since $\text{WF}_\text{A}v$ is locally determined, it will suffice to define wave front sets when $Y\subset\bR^m$ is an open subset.
There are several equivalent ways to do this.
We will follow the one based on the FBI transformation (the acronym honoring the investigations of Fourier, Bros, and Iagolnitzer).

Write $xy=\sum x_j y_j$ for the inner product of $x=(x_j), y= (y_j)\in\bR^m$, and $x^2$ for $xx$.
Identify $T^*Y$ with $Y\times\bR^m$, and write $\alpha\in T^* Y$ as $\alpha=(\alpha',\alpha'')$ with $\alpha'=\pi\alpha\in Y$, $\alpha''\in\bR^n\approx T_{\pi\alpha} Y$.

\begin{defn}Given $v\in C(Y)$, an
$$ 
a\in T^* Y\backslash\text{ zero section }\approx Y\times (\bR^m\backslash \{0\})
$$
is not in $\text{WF}_{\text A}v$ if there are $\var, C\in (0,\infty),\ \chi\in C_0^\infty(Y)$ that is equal to 1 near $a'=\pi a$, and a neighborhood $\Omega\subset T^* Y$ of $a$ such that for $\lambda\in (1,\infty)$ and $\alpha\in\Omega$ 
\begin{equation}
\Big|\int_Y e^{i\lambda\{i(\alpha'-y)^2-\alpha'' y\}}\chi(y) v(y) dy_1\ldots dy_m\Big|\leq Ce^{-\var\lambda}.
\end{equation}
\end{defn}
It does not matter which $\chi$ we use. If (2.1) holds for some $\chi$, it holds for any $\chi$, possibly with different $\var, C$.

The integral in (2.1) is an FBI transform of $v$.
In [Sj, Chapter 6] Sj\"ostrand allows for more general phase functions $\phi$ than $\phi(y,\alpha)=i(\alpha'-y)^2-\alpha'' y$ occuring in (2.1).
In addition to analyticity near $(a',a)$ what is required is
\begin{equation}
\begin{gathered}
\text{Im}\, \phi(\alpha',\alpha)=0,\quad {\partial\phi\over\partial y}\ (\alpha',\alpha)=-\alpha'',\quad\text{ and}\\
\text{Im}\, \phi (y,\alpha)\geq\text{ const}(\alpha'-y)^2.
\end{gathered}
\end{equation}
(In Sj\"ostrand's definition the minus sign in the first line is omitted, but this is because in his version of the integral in (2.1) $\overline v$ is integrated rather than $v$. 
Also, instead of $\text{Im}\, \phi(\alpha',\alpha)=0$ he requires $  \phi(\alpha',\alpha)=0$. 
This difference is irrelevant, since from a $\phi$ satisfying (2.2) one can pass to $\phi'(y,\alpha)=\phi(y,\alpha)-\phi(\alpha',\alpha)$ without affecting $|\int_Ye^{i\lambda\phi}\chi v|$.)
Allowing more general phases is useful for understanding how  analytic changes of coordinates affect $\text{WF}_{\text A}v$---they do not, if $\text{WF}_{\text A}v$ is considered  as a subset of $T^*Y$.
We will get by by using the phase in (2.1) only, although at one point we will be obliged to estimate integrals with more general phases.
For the equivalence of various definitions of $\text{WF}_{\text A}$ see [H, Chapters VIII and IX, Sj, Th\'eor\`eme 6.5].

The main result of this section is the following.
Let $I\subset\bR$ be  an open neighborhood of 0, and $(X,\omega)$ a compact K\"ahler manifold. Denote by  
$\text{sgrad}_u$  symplectic gradient  with respect to the symplectic form $\omega_u$. If $\xi\in C^1(X)$, $\sgru_u\xi$ (also 
called Hamiltonian vector field) is the vector field that corresponds to the one--form $d\xi$ under the isomorphism 
$TX\approx T^*X$ induced by $\omega_u$.

\begin{thm}Suppose that $\omega$ is analytic.
Let $f\colon I\to\cH$ be a geodesic and
\begin{equation}
u=f(0)\in C^{\infty} ( X),\qquad \xi={df\over dt}(0)\in C^{\infty} ( X).
\end{equation}
If for some $x\in  X$ and $a\in  T^*_x  X$
\begin{equation}
a\in \wf u\cup\wf\xi,
\end{equation}
then $\langle a,\text{sgrad}_{u }\xi (x)\rangle=0$. That is, if $\xi\in E_u$ then (2.4) implies  $\langle a,\text{sgrad}_{u }\xi (x)\rangle=0$.
\end{thm}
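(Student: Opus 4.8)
The plan is to pass to the Semmes--Donaldson picture, extract the Monge--Amp\`ere foliation, and then play the microlocal regularity of the resulting solution off against the fact that it does not depend on one real variable.

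\emph{Step 1 (the Monge--Amp\`ere solution and its foliation).} Write $t=\operatorname{Re}\tau$, $s=\operatorname{Im}\tau$, let $S=\{\tau\in\bC:t\in I\}$, and set $U(\tau,x)=f(t)(x)$ on $S\times X$; as $f$ is a smooth curve in $\cH$, $U\in C^\infty(S\times X)$. Let $\varpi$ be the pullback of $\omega$ under $S\times X\to X$. By Semmes and Donaldson the geodesic equation for $f$ is equivalent to $\Theta:=\varpi+i\partial\opartial U\geq0$ together with $\Theta^{n+1}=0$. Along $\{t=0\}\times X$ the restriction of $\Theta$ to the $X$-directions is $\omega_{f(t)}>0$, so near this slice $\Theta$ has constant rank $n$ and $\Ker\Theta$ is a complex line field. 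A short computation---using $\partial_\tau U=\tfrac12\partial_tU$ since $U$ is $s$-independent---gives, at $(0,x)$,
\begin{equation*}
\Ker\Theta_{(0,x)}=\bC\cdot\bigl(\partial_\tau-\tfrac12\,\gru^{1,0}_{\omega_u}\xi(x)\bigr),
\end{equation*}
$\gru^{1,0}_{\omega_u}$ denoting the $(1,0)$ part of the $\omega_u$-gradient. It is classical that the null leaves of such a $\Theta$ are holomorphic curves; near the slice each is a graph $\tau\mapsto(\tau,g(\tau))$ with $g$ holomorphic into $X$, and $\Theta|_{\mathrm{leaf}}=0$ forces $i\partial\opartial(U|_{\mathrm{leaf}})=-g^*\omega$, whose right side is analytic because $\omega$ is; hence $U$ extends holomorphically off each leaf, i.e.\ $U$ is real analytic along leaves. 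The real tangent plane of the leaf $L_x$ through $(0,x)$ is $\bR\bigl(\partial_t-\tfrac12\gru_{\omega_u}\xi(x)\bigr)\oplus\bR\bigl(\partial_s+\tfrac12\,\sgru_u\xi(x)\bigr)$. (The foliation is only transversally smooth, which leaves room for $\omega_u$ not to be analytic.)

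\emph{Step 2 (microlocal regularity---the crux).} Here $\omega$ analytic makes $\Theta^{n+1}=0$ an analytic equation, and the claim to be proved is
\begin{equation*}
\wf U\subset N^*(\text{Monge--Amp\`ere foliation})\quad\text{near }\{t=0\}\times X,
\end{equation*}
that is, every $\alpha\in\wf U$ annihilates the leaf through its base point. Morally this is analytic hypoellipticity of $\Theta^{n+1}=0$ off the characteristic variety of its linearization, whose symbol is the rank-one cofactor matrix of $\Theta$ and vanishes precisely on the conormals of the leaves; but mere analyticity along each single leaf (Step 1) is strictly weaker than this conical statement, so the equation must genuinely be used. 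I would prove the claim by a direct FBI estimate: after an analytic change of coordinates flattening the geometry, integrate $U$ against an FBI kernel whose phase $\phi$ is adapted to the leaf---so $\phi$ satisfies Sj\"ostrand's conditions (2.2) but is not the Gaussian phase of (2.1)---and deform the contour into the complex domain along the leaf, where $U$ extends holomorphically, gaining exponential decay; the deformation is obstructed only in directions conormal to the leaf. This is the step I expect to be the main obstacle, and it is the occasion, announced above, on which more general phases become unavoidable.

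\emph{Step 3 (descent to $X$).} Since $U$ is $s$-independent, every $\alpha\in\wf U$ kills $\partial_s$. Pair this with $\alpha\in N^*_{(0,x)}L_x$ and write $a=\alpha|_{T_xX}$: the relation $\alpha(\partial_s+\tfrac12\sgru_u\xi(x))=0$ reads $\alpha(\partial_s)=-\tfrac12\langle a,\sgru_u\xi(x)\rangle$, so $\alpha(\partial_s)=0$ forces $\langle a,\sgru_u\xi(x)\rangle=0$; the same bookkeeping shows $\wf U$ contains no nonzero conormal of $\{\tau=0\}\times X$. Hence the restriction theorem for analytic wave front sets [H, Ch.\ VIII] applies to $U|_{\{\tau=0\}\times X}=u$ and gives $\wf u\subset\{(x,a):\langle a,\sgru_u\xi(x)\rangle=0\}$. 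Finally $\partial_tU$ is again $s$-independent, $\wf(\partial_tU)\subset\wf U$, and $\partial_tU|_{\{\tau=0\}\times X}=\xi$, so running Step 3 with $\partial_tU$ in place of $U$ gives the same inclusion for $\wf\xi$. Combining, if $a\in\wf u\cup\wf\xi$ lies over $x$ then $\langle a,\sgru_u\xi(x)\rangle=0$, as claimed.
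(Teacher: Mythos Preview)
Your overall architecture matches the paper's: pass to the Monge--Amp\`ere foliation, extract a microlocal constraint via an FBI--type estimate, and descend to $X$. The main difference is the \emph{order of operations}. You work on the full $(2n+2)$--dimensional space $S\times X$, aim to prove $\wf U\subset N^*\cF$ there, and restrict to $\{\tau=0\}\times X$ at the end. The paper instead restricts immediately to the real hypersurface $H=\{\text{geodesic parameter}=0\}$: on $H$ the function reduces to the pullback of $u$ (plus an analytic local potential), and the leaf supplies the off--$H$ direction in which the harmonic extension lives. Lemma~2.5 is then exactly the statement that for a function on $H$ with a continuous harmonic extension along a $C^1$ foliation by Riemann surfaces transverse to $H$, the analytic wave front set annihilates the leaf--trace in $H$. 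This avoids ever complexifying $S\times X$: the contour deformation goes into the already--present imaginary direction of the strip variable.

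Your Step~2 is where the difficulty is concentrated, and two points deserve care. First, you write ``after an analytic change of coordinates flattening the geometry''---but the foliation is only smooth, so no analytic straightening exists in general; the paper straightens by a $C^1$ diffeomorphism $\Theta$ and tracks by hand that the transformed phase and Jacobian remain holomorphic in the leaf variable (this is why the more general Sj\"ostrand phases enter). Second, ``deform the contour into the complex domain along the leaf, where $U$ extends holomorphically'' is ambiguous: $S\times X$ is already complex, and the relevant extension is of the real--analytic $U|_{\text{leaf}}$ into a complexification of $S\times X$ as a \emph{real} manifold; one must check that these leafwise extensions depend continuously on the transverse variable so that a single contour deformation works. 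This is doable (harmonicity gives uniform radii via the Poisson kernel), but it is precisely the content of Lemma~2.5 in the paper's formulation, and by restricting to $H$ first the paper gets the extension for free as the given function $\tilde w$. Also, your remark that ``the equation must genuinely be used'' overstates things: once you have the $C^1$ foliation and harmonicity along leaves, the FBI argument needs nothing further from the Monge--Amp\`ere equation.

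Your Step~3 is fine; the paper handles the $\xi$ case analogously by noting that $\partial\tilde w/\partial s$ is holomorphic (not just harmonic) along the foliation and applying Lemma~2.5 again.
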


Here $\langle \ , \ \rangle$ stands for the pairing between $T^* X$ and $TX$.
Accepting Theorem 2.4 we can complete the proof of Theorem 2.2 as follows.

\begin{proof}[Proof of Theorem 2.2]
Only the part about nonanalytic $\omega_u,\xi$  needs to be proved, and by the density of analytic K\"ahler forms, Proposition 2.1, we can assume $\omega$ is analytic. If  $u\in\cH$  and $\omega_u$ is not analytic, then $u$ is not analytic either, and there is an $a\in \text{WF}_{\text A}u\cap T^*_x X$ with some $x\in X$.
By Theorem 2.4 any $\xi\in E_u$ satisfies
$\langle a,\text{sgrad}_u\xi(x)\rangle=0$; and this constrains $E_u$ to lie in a closed hyperplane.

Next suppose that $\xi\in E_u$. We will show that if $E_\xi$ is not contained in a closed hyperplane, then $\xi$ is  analytic. Let $x\in X$ and $a\in T_xX^*\setminus\{0\}$. 
 By assumption $\langle a,\sgru_u\eta(x)\rangle \neq 0$ for some $\eta\in E_\xi$. There are two possibilities. Either   $\langle a,\sgru_u\xi(x)\rangle$ is nonzero, and by Theorem 2.4  therefore $a\notin\wf\xi$; 
or  $\langle a,\sgru_u\xi(x)\rangle = 0$, and so  $\langle a,\sgru_u\xi(x)\pm\sgru_u\eta(x)\rangle \neq 0$. 
In the second case again by Theorem 2.4  $a\notin\wf(\xi\pm\eta)$; but since $\wf\xi=\wf\big((\xi+\eta)+(\xi-\eta)\big)\subset\wf(\xi+\eta)\cup\wf(\xi-\eta)$,
this also implies $a\notin\wf\xi$. As $a$ was arbitrary, $\xi $ is indeed analytic.
\end{proof}

It is not hard to show that for generic $u\in\cH$
$$
\wf u=T^*X\backslash\text{ zero section},
$$
cf.~\cite[Theorem 8.4.14]{H}, that constructs one $u$ with prescribed wave front set.
Theorem 2.4 then implies $\xi=\text{const}$. Since according to  Rubinstein and Zelditch, or Bedford and Burns, geodesics are uniquely determined by their initial positions and velocities, see \cite[Proposition 1.1]{BB}, \cite[Theorem 2.2]{RZ}, the only geodesics through $u$ are $t\mapsto u+\const t$.
Similarly, if $u\in\cH$ is arbitrary, $\wf(\xi)=T^*X\backslash$ zero section for generic $\xi\in T_u\cH$.
Hence, again by Theorem 2.4, such $\xi$ cannot serve as the velocity vector of a geodesic.
This is of course the way it should be with ill--posed initial value problems.

The proof of Theorem 2.4 rests on the Monge--Amp\`ere interpretation of the geodesic equation and on a general result on wave front sets of functions that are analytic along the leaves of certain foliations.
We start with the latter.
In the next lemma and in its proof, $C^k(M)$,  $C(M)$ stand for complex valued functions.

\begin{lem}Let $U\subset\bC\times\bC^n$ be a neighborhood of 0, and $\cF$ a  two dimensional foliation of $U$ of class $C^1$, whose leaves are complex submanifolds (Riemann surfaces).  Assume that the leaf $L$ of $\cF$ through $0$ is transverse to the hyperplane
$$
H=\bR\times\bC^n\subset\bC\times\bC^n.
$$
If $w\in C(U\cap H)$ can be extended to a function $\tilde w\in C(U)$ that is harmonic along the leaves of $\cF$, and $b\in\wf w\cap T_0^*H$, then $b|T_0(L\cap H)=0$.
\end{lem}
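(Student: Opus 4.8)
The plan is to prove the contrapositive: if $b\in T_0^*H$ does not annihilate the line $T_0(L\cap H)$, then $b\notin\wf w$. This will be done by a one--variable contour deformation inside the leaves, followed by an FBI estimate.

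\emph{Coordinates.} Since $L$ is transverse to $H$, the complex line $T_0L$ is not contained in $T_0H=\{dy_0=0\}$ (writing $z_0=x_0+iy_0$), so $T_0L$ projects isomorphically onto the $z_0$--axis; hence near $0$ the leaf $L$ is a graph $z=g(z_0)$ over $z_0$ with $g$ holomorphic, $g(0)=0$, and the same holds for all nearby leaves, which are graphs $z=G(z_0,c)$, where $c$ denotes the point $(0,c)\in H$ at which the leaf meets $\{z_0=0\}$, so $G(0,c)=c$ and $G(\cdot,0)=g$. Because $\cF$ is $C^1$, $G$ is $C^1$ in $(z_0,c)$ and holomorphic in $z_0$; Cauchy's formula in $z_0$ and differentiation of difference quotients in $c$ show that $\partial_{z_0}^jG$ and $\partial_cG$ are again holomorphic in $z_0$ and continuous in $c$. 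Identifying $H$ with $\bR^{2n+1}$ via the real coordinates $(x_0,\operatorname{Re}z,\operatorname{Im}z)$, the map $\Psi(x_0,c)=(x_0,G(x_0,c))$ is a $C^1$ diffeomorphism of a neighbourhood of $0$ in $\bR\times\bC^n$ onto a neighbourhood of $0$ in $H$ (injective because leaves are disjoint, $D\Psi(0,0)$ invertible because $G(0,\cdot)=\id$); its Jacobian $J=\det D\Psi$ is real analytic in $x_0$ and continuous in $c$, with a holomorphic extension in $x_0$ to a fixed disc $\{|z_0|<\rho\}$ uniformly in $c$; and $v:=\partial_{x_0}\Psi(0,0)=(1,\operatorname{Re}g'(0),\operatorname{Im}g'(0))$ spans $T_0(L\cap H)$.

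\emph{Analyticity along leaves and the phase.} For each $c$, $W(x_0,c):=\tilde w(x_0,G(x_0,c))$ is $\tilde w$ restricted to the leaf $L_c$, read in a holomorphic parameter, hence harmonic in $x_0$; writing $W(\cdot,c)=A(\cdot,c)+\overline{B(\cdot,c)}$ as holomorphic plus anti--holomorphic, normalised by $B(0,c)=0$, the Taylor coefficients of $A,B$ are continuous in $c$ and $O(\sup_U|\tilde w|)$, so $W$ extends holomorphically in $x_0$ to $\{|z_0|<\rho\}$ as $A(\zeta,c)+\overline{B(\bar\zeta,c)}$, uniformly and continuously in $c$. The phase $\phi(y,\alpha)=i(\alpha'-y)^2-\alpha''y$ is polynomial in $y$, so $\Phi(x_0,c,\alpha):=\phi(\Psi(x_0,c),\alpha)$ extends holomorphically in $x_0$ as well; for real $x_0$ and real $\alpha$ near $(0,b)$ one computes $\operatorname{Im}\Phi(x_0,c,\alpha)=(\alpha'-\Psi(x_0,c))^2\ge 0$ and $\partial_t|_{t=0}\operatorname{Im}\Phi(x_0+it,c,\alpha)=-\alpha''\cdot\partial_{x_0}\Psi(x_0,c)$, the latter tending to $-\,b\cdot v$ at $(x_0,c)=0$ as $\alpha''\to b$. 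Then, since by the standard localisation of $\wf$ it suffices to bound $\int_{|y|<2r_0}e^{i\lambda\phi(y,\alpha)}w(y)\,dy$ by $Ce^{-\var\lambda}$ for $\alpha$ near $(0,b)$ (the contribution of $\{|y|\ge 2r_0\}$, and of any cut--off transition region, being trivially exponentially small since there $\operatorname{Im}\phi=(\alpha'-y)^2$ is bounded below), one changes variables by $y=\Psi(x_0,c)$ and, discarding at exponential cost the part of a product box $\{|x_0|<r_1,\ |c|<r_2\}$ with $r_1<\rho$ lying outside $\Psi^{-1}(\{|y|<2r_0\})$, is left with $\int_{|c|<r_2}\int_{-r_1}^{r_1}e^{i\lambda\Phi(x_0,c,\alpha)}W(x_0,c)J(x_0,c)\,dx_0\,dc$. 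For each $c$ the inner integrand is holomorphic in $\zeta=x_0+it$ on $\{|z_0|<\rho\}$, so Cauchy's theorem replaces $[-r_1,r_1]$ by the contour $\zeta=x_0+i\sigma t(x_0)$, where $t\ge0$ is a fixed bump equal to a small $\delta_0$ near $0$ and vanishing near $\pm r_1$, and $\sigma=\pm1$ is chosen so that $\sigma(-\,b\cdot v)>0$ (possible since $b\cdot v\ne 0$). On this contour $\operatorname{Im}\Phi\ge\var>0$ — near $x_0=0$ from the term $-\sigma t\,\alpha''\cdot\partial_{x_0}\Psi$, and away from $x_0=0$ from $\operatorname{Im}\Phi\ge(\alpha'_1-x_0)^2$ — once $\delta_0,r_1,r_2$ and the neighbourhood of $(0,b)$ are small; as $W,J$ are bounded there, the integral is $O(e^{-\var\lambda})$, whence $b\notin\wf w$.

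The main obstacle is the regularity bookkeeping behind the first two steps: since $\cF$ is only $C^1$, one must verify that the graph functions $G(\cdot,c)$ — though merely $C^1$ jointly — have all $z_0$--derivatives holomorphic in $z_0$ and continuous in $c$, which is precisely what makes $\Psi$, $J$, $\Phi$ and $W$ holomorphically extendable in the single real variable $x_0$ uniformly in the leaf parameter. Once that is in hand, the phase computation identifying $\partial_t\operatorname{Im}\Phi$ with the pairing $\langle b,T_0(L\cap H)\rangle$ is short, and the cut--off and non--product--region errors are routine FBI bookkeeping.
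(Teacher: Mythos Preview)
Your argument is correct and follows essentially the same strategy as the paper's proof: straighten the foliation so that the leaf direction becomes the $z_0$--axis, exploit leafwise harmonicity to get a holomorphic extension in that variable, and shift the $x_0$--contour into the half--plane where $\text{Im}\,\Phi$ picks up a positive contribution proportional to $\langle b,T_0(L\cap H)\rangle$. The only difference is packaging: the paper first treats the model foliation $\cF_0$ (using the Poisson formula to pass from harmonic to holomorphic) and then pulls back a general $\cF$ to $\cF_0$ via a straightening map $\Theta$, whereas you fold these steps together by working directly in graph coordinates $(x_0,c)\mapsto(x_0,G(x_0,c))$ and handling the harmonic--to--holomorphic step via the decomposition $A+\overline{B}\mapsto A(\zeta)+\overline{B(\bar\zeta)}$; the regularity bookkeeping you flag (that $\partial_c G$ and $J$ inherit holomorphy in $z_0$ from Cauchy's formula despite $\cF$ being only $C^1$) is exactly what the paper encodes in its observation that $\partial\Theta_j/\partial y_k$, $\partial\Theta_j/\partial\bar y_k$, and the Jacobian factors $A_k\overline{B_k}$ depend holomorphically on $y_0$.
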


\begin{proof}For brevity we will say a function is harmonic or holomorphic along $\cF$ to mean it is harmonic or holomorphic along the leaves of $\cF$.
Consider the foliation $\cF_0$ of $U$ whose leaves are $U\cap(\bC\times \{z\})$, $z\in\bC^n$.
First we deal with the special case when $\cF=\cF_0$ and $\tilde w$ is {\sl holomorphic} along $\cF_0$.
We will show that if $b\in T_0^*H$ and $b|T_0(L\cap H)\neq 0$, then $b\not\in\wf w$.

For points $y\in\bC\times\bC^n$ we use complex coordinates $y_0,y_1,\ldots,y_{n}$.
Since $T_0(L\cap H)$ is spanned by $\partial/\partial \Ree y_0$, the assumption $b|T_0(L\cap H)\neq 0$ means that the component $b''_0$ of $b=(0,b'')\in T_0^* H\approx \{0\}\times H$ is nonzero.
Define 
\begin{gather}
 \phi\colon U\times T^* (U\cap H)\approx U\times (U\cap H)\times H\to\bC,\nonumber\\
 \phi(y,\alpha)=i(\alpha'_0-y_0)^2+i\sum_1^n|\alpha'_j-y_j|^2-\alpha''_0 y_0-\text{Re}\sum_1^{n} \alpha''_j\overline y_j.
\end{gather}
In order to show $b\notin\wf w$, according to Definition 2.3 we need to produce $\var, C>0$ and $\chi\in C_0^\infty(U)$ so that $\chi\equiv 1$ in a neighborhood of 0 and
\begin{equation}
\Big|\int_{U\cap H} e^{i\lambda\phi(y,\alpha)}\chi (y) w(y) dy_0\wedge dy_{1}\wedge d\overline y_{1}\wedge\ldots\wedge dy_n\wedge d\overline y_{n}\Big|\leq C e^{-\var\lambda}
\end{equation}
for $\lambda\in (1,\infty)$ and $\alpha$ in a suitable neighborhood of $b\in\ T^*(U\cap H)$.
We will do this for continuous $\phi$ more general than (2.5).
What matters is that $\phi(\cdot,\alpha)$ is holomorphic along $\cF_0$ (i.e., as a function of $y_0$), and for $\alpha$ close to $b$ 
\begin{equation}
\text{Im}\,\phi(y,\alpha) >0\qquad\text{if }\quad \begin{cases} y\in U\cap H\setminus \{\alpha'\}\quad\text{ or }\\
\text{Im}\,y_0\text{ is small and }b''_0\text{ Im}\,y_0<0.\end{cases}
\end{equation}
This latter property is obvious for $\phi$ given in (2.5), since 
$$
\text{Im}\,\phi (y,\alpha)=-\alpha''_0 \text{ Im}\,y_0-2(\text{Im}\,y_0)^2+\sum_0^{n}|\alpha'_j-y_j|^2.
$$

To prove (2.6) we choose any $\chi\in C_0^\infty(U)$ that is 1 in a neighborhood of 0, a smooth $p\colon U\to [0,\infty)$ supported in this neighborhood such that $p(0)>0$, and with $\delta>0$ let
$$
P_\delta=\{(y_0-i\delta b''_0 p(y), y_1,\ldots,y_{n})\colon y\in U\cap H\}.
$$
If $\delta$ is small enough
\begin{multline}
\int_{U\cap H} e^{i\lambda\phi(y,\alpha)}\chi (y) w(y) dy_0\wedge dy_1\wedge d\overline y_1\wedge\ldots\wedge d\overline y_{n}=\\
 \int_{P_\delta} e^{i\lambda\phi(y,\alpha)}\chi (y) \tilde w(y) dy_0\wedge dy_1\wedge d\overline y_1\wedge\ldots\wedge d\overline y_{n},
\end{multline}
because for fixed $y_1,\ldots,y_{n}$ already the $dy_0$ integrals on both sides agree by Cauchy's theorem.
However, if $\Omega\subset T^*(U\cap H)$ is a small neighborhood of $b$ and $\delta>0$ is sufficiently small but fixed, by (2.7)
$$
\inf \{\text{Im}\,\phi(y,\alpha)\colon y\in P_\delta\cap\text{supp }\chi,\,\alpha\in\Omega\}=\var>0.
$$
Hence the right hand side of (2.8) is $O(e^{-\var\lambda})$, and (2.6) is proved in the special case.

Next we still assume $\cF=\cF_0$, but the extension $\tilde w$ to be  just harmonic along $\cF_0$.
In fact, this case is no different, because the assumption implies that $w$ even has an extension $w_1$, perhaps in a smaller neighborhood of 0, that is holomorphic along $\cF_0$.
To convince ourselves, choose $r>0$ so that
$$
\Delta=\{y\in\bC\times\bC^n\colon |y_j|\leq r,\ j=0,\ldots,n\}\subset U,
$$
and for $y=(y_j)\in\text{int}\,\Delta$ define
$$
w_1 (y)={1\over 4\pi}\int_0^{2\pi}\tilde w(re^{it},y_1,\ldots,y_{n}) \left({re^{it}+y_0\over re^{it}-y_0}+{re^{-it}+y_0\over re^{-it}-y_0} \right)\ dt,
$$
clearly holomorphic in $y_0$.
If $y\in\Delta\cap H$ then $y_0\in\bR$ and 
$$
w_1(y)={1\over 2\pi}\int_0^{2\pi}\tilde w(re^{it},y_1,\ldots,y_{n})\ \Ree\ {re^{it}+y_0\over re^{it}-y_0}\ dt=w(y)
$$
by Poisson's formula.

Lemma 2.5 in its full generality can be reduced to this special case as follows.
By applying a local biholomorphism of $(\bC^{n+1},0)$ we can arrange that the leaf $L\in\cF$ through 0 is given by $y_1=y_2=\ldots=y_{n}=0$.
Then again we need to prove that $b=(0,b'')\not\in\wf w$ if $b|T_0 (L\cap H)\neq 0$, i.e., if $b''_0\neq 0$.
We define $\phi$ by (2.5); the goal therefore is to show (2.6).

We construct a local diffeomorphism $\Theta$ of $U$ at 0 that sends (the germ of) $\cF_0$ to (the germ of) $\cF$.
Write $\pi_0$ for the projection $\pi_0(y_0,\ldots,y_{n})=y_0$.
Near $0\in U$, the fibers of $\pi_0$ are transverse to the leaves of $\cF$.
Hence, upon shrinking $U$, we can assume $\pi_0$ is injective on leaves.
For $y\in U$ let $\Theta(y)$ be the point on the leaf through $(0,y_1,\ldots,y_{n})$ for which $\pi_0\Theta(y)=y_0$. 
This $\Theta$ is indeed a $C^1$ diffeomorphism near $0$, of form
$$
\Theta (y)=(y_0,\Theta_1(y),\ldots,\Theta_n(y)),
$$
 and maps the leaves of $\cF_0$ to leaves of $\cF$. Since it fixes $L$, its differential $\Theta_*|T_0U$ is the identity.
That the leaves are complex submanifolds means the $\Theta_j(y)$ depend holomorphically on $y_0$.
This implies that $\partial\Theta_j(y)/\partial y_k$ and  $\partial\Theta_j(y)/\partial \overline y_k$ also depend holomorphically on $y_0$. Hence, modulo terms that are multiples of $dy_0\wedge d\overline y_0$,
$$
dy_0\wedge d\Theta_1\wedge d\overline\Theta_1\wedge\ldots\wedge d\Theta_n\wedge d\overline\Theta_n=J dy_0\wedge dy_{1}\wedge d\overline y_{1}\wedge\ldots\wedge dy_{n}\wedge d\overline y_{n},
$$
where $J=\sum_k A_k\overline B_k$ with $A_k, B_k\in C(U)$ that depend holomorphically on $y_0$.
To estimate the integral in (2.6) we 
substitute $\Theta(y)$ for $y$---which is legitimate if $\chi$ is supported in a sufficiently small neighborhood of 0. This transforms the integral in (2.6) into
\begin{equation}
\int_{U\cap H} e^{i\lambda\phi(\Theta(y),\alpha)}\chi(\Theta(y)) w(\Theta(y))J (y) dy_0\wedge dy_1\wedge\ldots\wedge dy_{n}\wedge d\overline{y}_{n}.
\end{equation}

Now we are in a situation already dealt with. We define a new phase function $\phi'$ by letting 
$(y_0,\ldots,y_n)^\dag=(\overline y_0,y_1,\ldots,y_n)$ and 
\begin{align*}
\phi'(y,\Theta^*\alpha)&=i(\alpha_0'-y_0)^2+i\sum_1^n(\alpha_j'-\Theta_j(y))(\overline{\alpha_j'}-\overline{\Theta_j(y^\dag)})\\
&\phantom{a+b+c+d}
-\alpha''y_0-\sum_1^n(\alpha''_j\overline{\Theta_j(y^\dag)}+\overline{\alpha_j''}\Theta_j(y))/2\\
&=i(\alpha_0'-y_0)^2-\alpha''y_0+\sum_1^n\big(i|\alpha_j'-\Theta_j(y)|^2-\Ree\alpha''_j\overline{\Theta_j(y)}\big)+R\\
&=\phi\big(\Theta(y),\alpha\big)+R,
\end{align*}
where the error term
$$
R=\sum_1^n\big(i\alpha_j'-i\Theta_j(y)+\alpha_j''/2\big)\big(\overline{\Theta_j(y)}-\overline{\Theta_j(y^\dag)}\big).
$$
In particular, $R=0$ when $y\in H$, and the phase  $\phi(\Theta(y),\alpha)$ in (2.9) can be replaced by 
$\phi'(y,\Theta^*\alpha)$.
As $\Theta_j(0)=\partial\Theta_j/\partial y_0(0)=0$ for $j\ge 1$, we can estimate 
$$
\Theta_j(y)-\Theta_j(y^\dag)=O(|y||\text{Im}\,y_0|)\quad\text{and}\quad
|R|= O(|\alpha|+|y|)|y||\text{Im}\,y_0|,
$$
as $y\to 0$. Since $\pi{\Theta^*}^{-1}\beta=\Theta(\pi\beta)$,
$$
\text{Im}\,\phi'(y,\beta)=-\alpha''_0 \text{ Im}\,y_0-2(\text{Im}\,y_0)^2+\sum_0^{n}|\alpha'_j-\Theta_j(y)|^2+\text{Im}\, R,\quad\beta=\Theta^*\alpha,
$$
satisfies for $y$ close to $0$ and  $\beta$ close to $\Theta^*b=b$
$$
\text{Im}\,\phi'(y,\beta)>0\quad{\text{if}}\quad y\in H\setminus\{\beta'\}\quad\text{or}\quad b_0''\,\text{Im}\,y_0<0,
$$
cf. (2.7). Clearly, $\phi'(y,\beta)$ is holomorphic in $y_0$.
The function $w\circ\Theta$, in turn, has a continuous extension to a neighborhood of $0\in U$ that is harmonic along $\cF_0$, hence as we saw, also an extension that is holomorphic along $\cF_0$.
This implies that
$$
w(\Theta(y)) J(y)=w(\Theta(y))\sum_k A_k(y) \overline{B_k(y^\dag)}, \qquad y\in U\cap H,
$$
also has a continuous extension, holomorphic along $\cF_0$.
By applying to (2.9), with the phase written as $\phi'(y,\Theta^*\alpha)$, the estimate we have already proved when $\cF=\cF_0$, it follows that (2.6) holds for general $\cF$, and the proof is complete.
\end{proof}
\begin{proof}[Proof of Theorem 2.4.]
Let $S=\{s\in\bC\colon\text{ Im}\,s\in I\}$ and pull back $\omega$ along the projection $S\times X\to X$ to a $(1,1)$--form $\Omega$. Note that $f$, as any geodesic, is $C^{\infty}$ (see \cite[Lemma 6.2]{L2}), and so  $v$ defined by $v(s,\cdot)=f(\text{Im}\,s)$ is in $C^{\infty}(S\times X)$.
According to Semmes, the geodesy of $f$ means
$$
(\Omega+i\partial\overline\partial v)^{n+1}=0\qquad\text{ on }S\times X.
$$
We will obtain the conclusion of Theorem 2.4 by studying this Monge--Amp\`ere equation.
Our arguments will be local, and for this reason no generality is lost if we take $X$ to be an open subset of $\bC^n$, and $\omega$ to have an analytic potential $p$, so that $\omega=i\partial\overline\partial p$.
Also, $x\in  X$ of the theorem can be taken to be $0\in\bC^n$.
The function
\begin{equation}
\tilde w(s,x)=v(s,x)+p(x)
\end{equation}
then solves
\begin{equation}
(\partial\opartial\tilde w)^{n+1}=0.
\end{equation}
Since $\tilde w(s,\cdot)$ is strongly plurisubharmonic, from (2.11) we conclude that Ker $\partial\opartial\tilde w$ defines a smooth two dimensional subbundle of $T(S\times X)$.

According to Bedford and Kalka  \cite[Theorem 2.4]{BK}  $S\times X$ is smoothly foliated by Riemann surfaces tangent to $\Ker\partial\bar\partial\tilde w$. Call this foliation $\cF$.
Since $\tilde w(s,\cdot)$ is strongly plurisubharmonic, the leaves are transverse  to $H=\bR\times\bC^n$; 
$\tilde w$ is harmonic, $\partial\tilde w/\partial s$ is holomorphic along $\cF$.
(Other components of $\partial\tilde w$ are also holomorphic along $\cF$, but this we will not need in the proof.) 

We apply Lemma 2.5, but to do so we have to consider the leaf $L$ of $\cF$ through $0\in H$ and compute $T_0(L\cap H)$.
We claim that $T_0(L\cap H)$ is spanned by the vector 
$$
4\Ree\partial/\partial s-\sgru_u\xi (0)\in T_0(\bC\times X).
$$
The vector is clearly tangential to $H$, but we need to check it is tangential to $L$, i.e.~to $\Ker\partial\opartial\tilde w|T_0 (S\times X)$. This is essentially known, see \cite[p. 23]{Do1}, but we do the calculation anyway.

To simplify, we choose coordinates $x_j$ on $X$ so that $\omega_u=i\sum d x_j\wedge d\overline x_j$ at $0\in X$.
In view of (2.3), (2.10) this means that at $0\in\bC\times\bC^n$
\begin{eqnarray*}
\partial\opartial\tilde w&=&\tilde w_{s\overline s} ds\wedge d\overline s+\sum_j\tilde w_{s\overline x_j} ds\wedge d\overline x_j+\sum_j\tilde w_{\overline s x_j} dx_j\wedge d\overline s+\sum_j dx_j\wedge d\overline x_j\\
&=&\tilde w_{s\overline s} ds\wedge d\overline s-{i\over 2}\ \sum_j \xi_{\overline x_j} ds\wedge d\overline x_j+{i\over 2}\sum_j\xi_{x_j} dx_j\wedge d\overline s+\sum_j dx_j\wedge d\overline x_j.
\end{eqnarray*}
For a vector Re $(\lambda_0\partial/\partial s+\sum\lambda_j \partial/\partial x_j)$ to be in $\Ker \partial\opartial\tilde w|T_0(S\times X)$, its contraction with $\partial\opartial \tilde w$ should vanish.
Modulo multiples of $ds,d\overline s$, the contraction is
$$
i\,\text{Im}\,\big(\sum\lambda_j d\overline x_j-\frac i2\sum\lambda_0 \xi_{\overline x_j} d\overline x_j\big).
$$
If this vanishes, $\lambda_j=i\lambda_0\xi_{\bar x_j}/2$. Hence $\Ker\partial\opartial\tilde w|T_0 (S\times X)$, known to be two dimensional, consists of vectors of form Re $\lambda_0(\partial/\partial s+(i/2)\sum\xi_{\overline x_j}\partial/\partial x_j)$.
For this vector to be tangent to $H$, $\lambda_0$ must be real, i.e., $T_0(L\cap H)$ is indeed spanned by
$$
\Ree\big(4\frac{\partial}{\partial s}+ 2i \sum \xi_{\overline x_j}(0)\frac \partial{\partial x_j}\big)=4\Ree\frac\partial{\partial s}-\sgru_u\xi (0).
$$

Now $\tilde w|H$ is the pull back of $v(0,\cdot)+p=u+p$ by the projection $\bR\times  X\to  X$, and $2i\partial\tilde w/\partial s|H=\partial\tilde w/\partial\text{Im}\,s|H$ is the pull back of $\xi$. Since $p$ is analytic, $\wf u=\wf(u+p)$.
It is easy to read off from (2.1) that pull back along a projection commutes with taking wave front sets. Hence
 $a\in (\wf u\cup \wf\xi)\cap T_0^* (X)$ pulls back to some
$$
b\in \big(\wf (\tilde w|H)\cup \wf (\partial \tilde w/\partial s|H)\big)\cap T_0^* (\bR \times X).
$$
Lemma 2.5 implies $b|T_0(L\cap H)=0$, and this is equivalent to $\langle a,\sgrad\xi(0)\rangle=0$, as claimed.
\end{proof}

\section{Isometries and analyticity}

In this section we will prove  Theorem 1.2 as well as a generalization. Theorem 1.2  is an easy consequencence of  Theorem 2.2 and 
\begin{lem}If $\cU\subset\cH$ is open, any $C^1$ isometry $F\colon \cU\to\cH'$ maps geodesics to geodesics.\end{lem}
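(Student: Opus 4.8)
The differential of a merely $C^1$ map is itself only continuous, so no Levi--Civita connection is attached to $F$, and the one--line argument that settles the $C^2$ case---an isometry preserves $\nabla$, hence carries geodesics to geodesics---is not available here. The plan is instead to characterize geodesics metrically and exploit that $F$ preserves lengths of curves. I would use two facts about the infinite dimensional Riemannian manifolds $\cH,\cH'$, of the kind collected in \cite{L2} (and resting ultimately on the Donaldson--Semmes description of $\cH$ \cite{Do1,Se} and on the non--positivity of the sectional curvature of the Mabuchi metric): (a) every geodesic is \emph{locally length minimizing}, i.e.\ each of its points has a neighborhood such that, for endpoints in a smaller sub-neighborhood, the geodesic arc has least length among all piecewise $C^1$ curves of the ambient manifold joining them; and (b) conversely, a constant--speed $C^1$ curve that is locally length minimizing is a genuine, indeed $C^\infty$, geodesic.

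First I would record that $F$ preserves lengths of curves. For a $C^1$ curve $\gamma\colon[\alpha,\beta]\to\cU$ the chain rule gives $\frac{d}{dt}(F\circ\gamma)(t)=F_*\dot\gamma(t)$, so $\bigl|\frac{d}{dt}(F\circ\gamma)(t)\bigr|_{F(\gamma(t))}=|\dot\gamma(t)|_{\gamma(t)}$ by the isometry hypothesis, and integrating in $t$ shows $F\circ\gamma$ has the same length as $\gamma$. Since $F\colon\cU\to\cU'$ is a $C^1$ diffeomorphism onto an open subset of $\cH'$, the assignment $\gamma\mapsto F\circ\gamma$ is a length--preserving bijection from the $C^1$ curves in $\cU$ onto the $C^1$ curves in $\cU'$. (Equivalently, $F$ carries the energy functional of $\cU$ to that of $\cU'$ and so sends critical points to critical points; but passing from ``critical point'' to ``geodesic'' runs into the same regularity difficulty discussed below.)

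Now let $f\colon I\to\cU$ be a geodesic. If $f$ is constant the claim is trivial, so assume it has some constant speed $c>0$; then $F\circ f$ also has constant speed $c$. Fix $t_0\in I$ and a ball $W\subset\cU'$ about $F(f(t_0))$. By (a) there is $\delta>0$ such that for $t_1<t_2$ in $(t_0-\delta,t_0+\delta)$ the arc $f|_{[t_1,t_2]}$ has least length among all curves joining $f(t_1)$ to $f(t_2)$---in particular among those lying in $\cU$---and, shrinking $\delta$, we may also assume $F\circ f$ maps $(t_0-\delta,t_0+\delta)$ into $W$. Applying the length--preserving bijection, $F\circ f|_{[t_1,t_2]}$ has least length among all curves in $\cU'$ joining its endpoints, hence a fortiori among those lying in $W\subset\cU'$. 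Since $t_0$ was arbitrary, $F\circ f$ is locally length minimizing in $\cH'$; by (b) it is a geodesic, which is the assertion.

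The main obstacle is fact (b): that a curve assumed only to be $C^1$ and locally minimizing is in truth a smooth geodesic of $\cH'$. This is where the depth of the lemma sits. Its proof would combine the local Riemannian geometry of $\cH'$---existence of short minimizers and a Gauss--lemma / first--variation argument adapted to a Fr\'echet manifold, for which the non--positive curvature of the Mabuchi metric and the Donaldson--Semmes picture are the relevant input---with an elliptic bootstrap: a $C^1$ local minimizer satisfies the geodesic equation weakly, its right--hand side is a continuous expression in $\dot\gamma$ alone, which forces $\gamma\in C^2$, and iterating, $\gamma\in C^\infty$. I would import all of this from \cite{L2}. A $C^2$ isometry needs none of it---its differential can simply be differentiated---which is exactly why the statement lies deeper for merely $C^1$ maps.
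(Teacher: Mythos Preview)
Your approach is essentially the same as the paper's: characterize geodesics variationally as (energy or length) minimizers, observe that a $C^1$ isometry preserves the minimizing property, and then invoke a result from \cite{L2} to conclude that a $C^1$ minimizer is a smooth geodesic. The paper's write--up is a bit tighter in two respects worth noting: for the forward direction it cites Chen \cite[p.~219, Corollary 3]{Ch} to get that geodesics in $\cH$ are \emph{globally} energy minimizing among $C^1$ competitors with fixed endpoints (so no local--versus--global bookkeeping is needed), and for the reverse direction it simply invokes \cite[Corollary 6.4]{L2}, which packages your fact (b) as a ready--made statement that $C^1$ isometries send energy minimizers to geodesics.
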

\begin{proof}This depends on the characterization of geodesics $[a,b]\to\cH$ as critical points of the energy
$$
\cE(f)=\frac 12\int_a^b|df(t)/dt|^2_{f(t)}\, dt,\qquad f\in C^1\big([a,b],\cH\big).
$$
On the one hand, it follows from \cite[p. 219, Corollary 3]{Ch} that geodesics minimize $\cE(f)$ among $C^1$ curves $f\colon[a,b]\to\cH$ with fixed $f(a), f(b)$. 
(Chen talks about minimizing length instead of energy, but one can be reduced to the other by means of the Cauchy--Schwarz inequality.)  
On the other hand, by \cite[Corollary 6.4]{L2}, quite generally $C^1$ isometries map energy minimizing curves to geodesics, under a mild condition that Mabuchi's metric meets. 
These two facts together prove the lemma.
\end{proof}
\begin{proof}[Proof of Theorem 1.2.]Recall the definitions of the sets $E_u,E_\xi\subset T_u\cH$, $u\in\cH,\,\xi\in T_u\cH$, 
given in the paragraph preceding Theorem 2.2.  Lemma 3.1 implies that an  isometry 
$F$ maps $E_u$ to  $E_{F(u)}$ and $E_\xi$ to $E_{F_*\xi}$.  Suppose $\omega_u$ is analytic.
 By Theorem 2.2, $E_u\subset T_u\cU$ is dense, whence so is $F_*E_u=E_{F(u)}\subset T_{F(u)}\cH'$, and another application of Theorem 2.2 gives that $\omega_{F(u)}$ is analytic.

Next suppose that $\xi\in T_u\cU$ is also analytic. Again, by Theorem 2.2  $E_\xi$, therefore $F_*E_\xi=E_{F_*\xi}$ are dense, which implies $F_*\xi$ is analytic.
\end{proof}

Henceforward in this section we assume that $\omega,\omega'$ are analytic. We generalize Theorem 1.2 to certain maps into $\cH$.
\begin{defn}Let $P$ be a real analytic manifold.
We say that a map $u\colon P\to\cH$ is very analytic if $v\colon P\times X\to\bR$ defined by $v(\tau,\cdot)=u(\tau)$ is analytic.
If $u$ is such, we say a section $\xi$ of $u^*T\cH$, i.e., a map
$$
P\ni\tau\mapsto\xi(\tau)\in T_{u(\tau)}\cH
$$
is very analytic if $\eta\colon P\times X\to\bR$ defined by $\eta(\tau,\cdot)=\xi(\tau)\in T_{u(\tau)}\cH\approx C^\infty(X)$ is also analytic.
\end{defn}
\begin{thm}Let $\cU\subset\cH$ be open, $F\colon\cU\to\cH'$ a $C^\infty$ isometry, and  $P$  a real analytic manifold.
If  $u\colon P \to\cU$ is very analytic and  $\xi$ is a very analytic section of $u^*T\cH$, then $u'=F\circ u$ and $\xi'= F_*\xi$ are very analytic.
\end{thm}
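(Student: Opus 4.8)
The plan is to prove first the assertion about $u'$, for an arbitrary real analytic parameter manifold, and then to deduce the assertion about $\xi'$ from it. For the deduction, grant that $F$ takes very analytic maps into $\cU$ to very analytic maps into $\cH'$. The Cauchy--Kovalevskaya theorem, applied with $\tau$ adjoined to the independent variables and using that $\omega$, $u$, $\xi$ are analytic, produces a very analytic family of geodesics: an analytic $f(\tau,r,\cdot)$ on a neighborhood of $P\times\{0\}$ solving the geodesic equation with $f(\tau,0,\cdot)=u(\tau)$ and $(df/dr)(\tau,0,\cdot)=\xi(\tau)$; this is the parametrized form of the remark of Mabuchi recalled just after Theorem 2.2, and after shrinking we have $f(\tau,r,\cdot)\in\cU$ throughout. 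Viewing $f$ as a very analytic map of the real analytic manifold $P\times I$ into $\cU$ and applying the granted assertion to it, $(\tau,r,x')\mapsto F(f(\tau,r,\cdot))(x')$ is analytic on $P\times I\times X'$; evaluating at $r=0$ recovers that $u'=F\circ u$ is very analytic, and differentiating at $r=0$ gives that $\xi'=F_*\xi$ is very analytic.

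It remains to prove that $F$ takes very analytic maps into $\cU$ to very analytic maps into $\cH'$. Very analyticity being local, fix $\tau_0$ in the parameter manifold, pass to a chart so that the parameter space is an open $P\subset\bR^d$ with $\tau_0=0$, and let $u\colon P\to\cU$ be the given very analytic map; the goal is that $(\tau,x')\mapsto u'(\tau)(x')$ be analytic on $P\times X'$. For each analytic $\eta_0\in C^\infty(X)$, regarded via $T_{u(\tau)}\cH\approx C^\infty(X)$ as a very analytic section of $u^*T\cH$, Cauchy--Kovalevskaya yields a very analytic family of geodesics $f_{\eta_0}(\tau,t,\cdot)\in\cU$ with $f_{\eta_0}(\tau,0,\cdot)=u(\tau)$ and velocity $\eta_0$ at $t=0$. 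By Lemma 3.1, $g_{\eta_0}(\tau,t,\cdot):=F(f_{\eta_0}(\tau,t,\cdot))$ is, for each $\tau$, a geodesic in $\cH'$ through $u'(\tau)$ with velocity $F_*\eta_0$ at $t=0$, and $g_{\eta_0}$ is $C^\infty$ on $P\times I\times X'$. By Theorem 1.2, applied at $f_{\eta_0}(\tau,t)$ and at $u(\tau)$, the form $\omega_{g_{\eta_0}(\tau,t)}$ --- hence $g_{\eta_0}(\tau,t,\cdot)$, since $\omega'$ is analytic --- and $F_*\eta_0$ are analytic on $X'$; Cauchy--Kovalevskaya inside $\cH'$ then shows $g_{\eta_0}(\tau,\cdot,\cdot)$ analytic on $I\times X'$ for each fixed $\tau$. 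What is missing is analyticity in the parameter $\tau$.

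For that I would reprove Theorem 2.4 with $\tau$ carried along. Pull $\omega'$ back to a $(1,1)$--form $\Omega'$ on $S\times X'$, $S=\{s\colon\text{Im}\,s\in I\}$, and set $\mathbf V(\tau,s,\cdot)=f_{\eta_0}(\tau,\text{Im}\,s,\cdot)$ on $P\times S\times X$ and $\mathbf V'(\tau,s,\cdot)=g_{\eta_0}(\tau,\text{Im}\,s,\cdot)=F(\mathbf V(\tau,s,\cdot))$ on $P\times S\times X'$. By Semmes' interpretation of geodesy, for each $\tau$ one has $(\Omega'+i\partial\opartial\mathbf V'(\tau,\cdot))^{n+1}=0$, so $\Ker\partial\opartial\mathbf V'(\tau,\cdot)$ integrates to a $C^\infty$ foliation $\cF_\tau$ of $S\times X'$ by Riemann surfaces, along which $\mathbf V'(\tau,\cdot)$ is harmonic and $\partial_s\mathbf V'(\tau,\cdot)$ is holomorphic; the source objects are very analytic, since $\mathbf V$ is. After the localization on $X'$ used in the proof of Theorem 2.4, and the same computation of the leaf of $\cF_\tau$ through the relevant point of $H'=\bR\times X'$, one is reduced to a parametrized analogue of Lemma 2.5: a $C^1$ foliation and a harmonic-along-leaves extension depending on a real analytic parameter $\tau$, the target extension $\mathbf V'$ arising from the $\tau$--analytic source extension $\mathbf V$ by applying the $C^\infty$ map $F$ slice by slice in $\tau$. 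Analytic wave front covectors of $(\tau,x')\mapsto u'(\tau)(x')$ with nonzero component along $X'$ are handled exactly as in Section 2: deforming the FBI integral in the complex directions of $S\times X'$, where $\mathbf V'(\tau,\cdot)$ is analytic, forces such a covector to annihilate $\sgru_{u'(\tau)}(F_*\eta_0)(x')$, and since the $F_*\eta_0$ run through a dense subset of $C^\infty(X')$ as $\eta_0$ ranges over the analytic functions, the covector must vanish, as in the proof of Theorem 2.2. Covectors along the pure parameter directions are the remaining --- and essential --- case: for these the FBI integral must be deformed so as to move $\tau$ too, which calls for a more general phase (the one foreshadowed in Section 2), and the required control cannot come from $\mathbf V'$ but must be extracted from the analyticity of $\mathbf V$ in $\tau$ together with the fact that $F$ carries the Monge--Amp\`ere structure of $\mathbf V$ onto that of $\mathbf V'$. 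Granting this, $(\tau,x')\mapsto u'(\tau)(x')$ has empty analytic wave front set, and $u'$ is very analytic.

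The step I expect to be the main obstacle is this parametrized analogue of Lemma 2.5, above all the pure-parameter wave front directions. For an arbitrary $C^\infty$ slicewise perturbation, applying the merely $C^\infty$ map $F$ slice by slice to a family of homogeneous complex Monge--Amp\`ere solutions analytic in a parameter $\tau$ need not yield a family analytic in $\tau$, so the argument must genuinely use that $g_{\eta_0}(\tau,\cdot)=F(f_{\eta_0}(\tau,\cdot))$ is constrained to solve the Monge--Amp\`ere equation (Lemma 3.1) and that the source family is analytic in $\tau$. Casting this into a clean FBI estimate with a general phase, parallel to the proof of Lemma 2.5 but with $\tau$ on the same footing as the variable along which the extension is harmonic, is the technical heart of the proof.
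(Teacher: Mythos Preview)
Your reduction of the claim for $\xi'$ to that for $u'$ via a very analytic family of geodesics is sound. The gap is in the argument for $u'$, and you have located it precisely: covectors in the analytic wave front set of $(\tau,x')\mapsto u'(\tau)(x')$ that lie purely in the parameter direction. For a covector with nonzero $X'$ component the parametrized version of Lemma 2.5 you sketch is plausible, since the leaf of $\cF_\tau$ gives a complex direction to deform into and the $\tau$ variable rides along. But for $b=(b_\tau,0)$ the leaf sits entirely inside the slice $\{\tau\}\times S\times X'$, so deforming along it cannot couple to $b_\tau$ at all; and there is no complex direction in $\tau$ available. Your proposed remedy, to extract control from the $\tau$-analyticity of $\mathbf V$ together with the fact that $F$ ``carries the Monge--Amp\`ere structure of $\mathbf V$ onto that of $\mathbf V'$'', does not bite: the Monge--Amp\`ere equation for $\mathbf V'(\tau,\cdot)$ is only $\tau$-wise, $F$ is merely $C^\infty$, and applying a $C^\infty$ map slice by slice to a $\tau$-analytic family has no reason, on those grounds alone, to remain $\tau$-analytic. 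You invoke an FBI estimate with a more general phase but do not indicate how it would produce the needed decay in the $\tau$ direction, and I do not see a way to make this line work.

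The paper sidesteps the obstacle by a device that is worth knowing: it turns the parameter into part of the base of a larger K\"ahler manifold and then invokes Theorem 1.2 once. After a local reduction (via $\tau\mapsto\sin 2\pi\tau$) one may take $P=\bR^m/\bZ^m$; form the complex torus $\bT=P\oplus iP$ with a sufficiently large K\"ahler form $\omega_\bT$, and work in $\cH(\omega_\bT\oplus\omega)$ on $\bT\times X$. Setting $\hat F(v)(t,\cdot)=F(v(t,\cdot))$ defines a $C^\infty$ isometry $G$ between open subsets of $\cH(\omega_\bT\oplus\omega)$ and $\cH(\omega_\bT\oplus\omega')$. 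Theorem 1.2 applied to $G$ says analytic $v,\eta$ go to analytic $G(v),G_*\eta$. Feeding in $v(t,\cdot)=u(\operatorname{Re}t)$ and $\eta(t,\cdot)=\xi(\operatorname{Re}t)$, which are analytic on $\bT\times X$, yields directly that $u'$ and $\xi'$ are very analytic. The point is that the $\tau$ variable, for you stubbornly real, has been complexified as part of $\bT$; the geodesic and wave-front machinery already packaged in Theorem 1.2 then covers the $\tau$ directions automatically, and no parametrized microlocal lemma is needed.
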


\begin{proof} We will reduce Theorem 3.3 to Theorem 1.2 as follows. 
First assume that $P=\bR^m/\bZ^m$ is a torus, and consider the complex torus $\bT=\bC^m/(\bZ+i\bZ)^m=P\oplus iP$, endowed with a K\"ahler form $\omega_\bT$. 
On $\bT\times X$ we have a K\"ahler form $\omega_\bT\oplus\omega$ and the corresponding space $\cH(\omega_\bT\oplus\omega)$ of K\"ahler  potentials; and similarly, the space  $\cH(\omega_\bT\oplus\omega')$ of potentials on $\bT\times X'$. 
Our $F$ induces a $C^\infty$ map $\hat F$ of
$$
\hat\cU=\{v\in  \cH(\omega_\bT\oplus\omega)\colon v(t,\cdot)\in\cU\text{ for all }t\in\bT\}
$$
into $C^\infty(\bT\times X')$, 
$$
\hat F(v)(t,\cdot)=F\big( v(t,\cdot)\big).
$$
We let $\cV=\hat F^{-1} \cH(\omega_\bT\oplus\omega')$. 
The restriction $G=\hat F|\cV$ is  a diffeomorphism on its image, and its differential acts on $\eta\in T_v  \cH(\omega_\bT\oplus\omega)$ by
$$
(G_*\eta)(t,\cdot)=F_*\big(\eta(t,\cdot)\big).
$$
This implies that $G$ is an isometry. Hence by Theorem 1.2 it maps analytic $v$, $\eta$ to analytic $G(v)$, $G_*\eta$.

Take $u,\xi$ as in the theorem. Denoting by $\Ree$ the projection $\bT=P\oplus iP\to P$, define $v,\eta\in C^\infty(\bT\times X)$ by
$$
v(t,\cdot)=u(\Ree t),\qquad\eta(t,\cdot)=\xi(\Ree t).
$$
If the K\"ahler form $\omega_\bT$ is chosen sufficiently large, $v\in  \cH(\omega_\bT\oplus\omega)$ and $\hat F(v)\in  \cH(\omega_\bT\oplus\omega')$, that is, $v\in\cV$. 
Since $v,\eta$ are analytic, so are $G(v),G_*\eta$, which is the same thing as saying that $u',\xi'$ are very analytic.

Next consider $P=(-2,2)^m\subset\bR^m$. For $t\in\bR^m$ let $p(t)=(\sin 2\pi t_1,\ldots,\sin 2\pi t_m)\in P$. The maps
$$
u\circ p\colon\bR^m\to\cH,\qquad \xi\circ p\colon \bR^m\to  C^\infty(X)\approx T_{u(p(t))}\cH
$$
descend to very analytic maps $\check u\colon\bR^m/\bZ^m\to\cH$,  $\check\xi\colon \bR^m/\bZ^m\to C^\infty(X)$. It follows that $F\circ\check u$ and $F_*\check\xi$ are very analytic, and so are $F\circ u$ and $F_*\xi$ over $(-1/4,1/4)^m$. Hence the case of general $P$ follows, as Theorem 3.3 is of local nature.
\end{proof}

It would be of some interest to clarify whether Theorem 3.3 holds for $C^1$ isometries as well. If it did, then so would the uniqueness theorem, Theorem 1.3.
By different means we could only prove that $C^1$ isometries send very analytic maps to $C^\infty$ maps, something that is not quite as useful.

\section{The proof of Theorem 1.1}

We consider K\"ahler manifolds $(X,\omega),(X',\omega')$, their spaces of potentials $\cH,\cH'$, an open $\cU\subset\cH$ and a $C^1$ isometry
$F\colon\cU\to\cH'$.
We denote by $\{\ , \ \}$, $\{\ , \ \}'$ the Poisson brackets on $T_u\cU\approx C^\infty (X)$, $T_{F(u)} \cH'\approx C^\infty (X')$, induced by
$\omega_u$, $\omega_{F(u)}$.
Finally, we write $\Phi$ for $F_*|T_u\cU$, viewed as a map $C^\infty(X)\to C^\infty(X')$.

\begin{lem}
For any $\xi,\eta,\zeta\in C^\infty(X)$ 
\begin{equation}
\{\{\Phi\xi,\Phi\eta\}', \Phi\zeta\}'=\Phi \{\{\xi,\eta\},\zeta\}.
\end{equation}
\end{lem}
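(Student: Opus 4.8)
The identity (4.1) is precisely the assertion that $\Phi=F_*|T_u\cU$ intertwines the Riemann curvature tensors of $\cH$ at $u$ and of $\cH'$ at $F(u)$. Indeed, by the classical computation of Mabuchi, Semmes and Donaldson \cite{M,Se,Do1} (cf.\ also \cite{L2}), the curvature of Mabuchi's metric at an arbitrary potential $w$ is, under the identification $T_w\cH\approx C^\infty(X)$,
$$
R_w(\xi,\eta)\zeta=c\,\{\{\xi,\eta\}_w,\zeta\}_w,
$$
with one and the same nonzero universal constant $c$ (equal to $-1/4$ in standard conventions) for $\cH$ and $\cH'$. So $\Phi\big(R_u(\xi,\eta)\zeta\big)=R_{F(u)}(\Phi\xi,\Phi\eta)\Phi\zeta$ is, after cancelling $c$, exactly (4.1). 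For a $C^2$ isometry the preservation of curvature would be automatic, but our $F$ is only $C^1$; so the plan is to recover the curvature at $u$ from data a $C^1$ isometry manifestly respects --- geodesics through $u$ (Lemma 3.1) and Mabuchi length.

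Assume first that $\omega_u$ is analytic, and take the reference form to be $\omega_u$. Let $v,w\in C^\infty(X)\approx T_u\cH$ be analytic. By the Cauchy--Kovalevskaya theorem applied to the geodesic equation (see the remarks following Theorem 2.2) there is a family of geodesics $\gamma_s\colon(-\var,\var)\to\cH$, analytic in $(s,t)$ for $|s|$ small, with $\gamma_s(0)=u$ and $\dot\gamma_s(0)=v+sw$. By Lemma 3.1 each $\tilde\gamma_s=F\circ\gamma_s$ is a geodesic in $\cH'$, all issuing from $F(u)$, and one differentiation of $F$ gives $\dot{\tilde\gamma}_s(0)=\Phi(v+sw)$. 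By Theorem 1.2 the data $(F(u),\Phi(v+sw))$ of $\tilde\gamma_s$ are analytic, so $\tilde\gamma_s$ too is analytic in $(s,t)$ (again Cauchy--Kovalevskaya, together with uniqueness of geodesics with analytic data). Consequently $J(t)=\partial\gamma_s(t)/\partial s|_{s=0}$ and $\tilde J(t)=\partial\tilde\gamma_s(t)/\partial s|_{s=0}=(dF)_{\gamma_0(t)}J(t)$ are genuine Jacobi fields along $\gamma_0$, resp.\ $\tilde\gamma_0$, with $J(0)=\tilde J(0)=0$; and since the curves $s\mapsto\gamma_s(0)$ and $s\mapsto\tilde\gamma_s(0)$ are constant, their covariant $t$--derivatives at $0$ coincide with ordinary derivatives, whence $D_tJ(0)=w$ and $D_t\tilde J(0)=\Phi w$ --- once more only the first differential of $F$ enters. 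Solving the Jacobi equation to third order and parallel--transporting back to $\gamma_0(0)$ gives
$$
|J(t)|_u^2=t^2|w|_u^2-\frac{t^4}{3}\langle R_u(w,v)v,w\rangle_u+O(t^5),
$$
and the analogous expansion for $|\tilde J(t)|_{F(u)}^2$ with $\Phi w,\Phi v$ in place of $w,v$. As $F$ is an isometry, $(dF)_{\gamma_0(t)}$ is a fiberwise isometry, so $|\tilde J(t)|_{F(u)}\equiv|J(t)|_u$; comparing the coefficients of $t^4$ gives $\langle R_u(w,v)v,w\rangle=\langle R_{F(u)}(\Phi w,\Phi v)\Phi v,\Phi w\rangle$ for all analytic $v,w$. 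Analytic functions being dense and $R_u,R_{F(u)},\Phi$ continuous, the same holds for all $v,w\in T_u\cH$; polarizing in $v,w$ and using the symmetries of the curvature tensor together with the fact that $\Phi$ is an isometric isomorphism, we get $\Phi\big(R_u(\xi,\eta)\zeta\big)=R_{F(u)}(\Phi\xi,\Phi\eta)\Phi\zeta$, i.e.\ (4.1), whenever $\omega_u$ is analytic.

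For general $u\in\cU$, pick $u_k\to u$ in $\cU$ with $\omega_{u_k}$ analytic (Proposition 2.1 and the openness of $\cU$); then (4.1) holds at each $u_k$, and letting $k\to\infty$ --- using that $F$ is $C^1$, so $F_*|T_{u_k}\cH\to F_*|T_u\cH$, and that the Poisson brackets $\{\ ,\ \}_{u_k}$ and $\{\ ,\ \}'_{F(u_k)}$ depend continuously on $u_k$ and $F(u_k)$ --- yields (4.1) at $u$.

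The crux is the middle step: because $F$ cannot be differentiated twice, curvature has to be extracted from lengths of Jacobi fields built by varying geodesics through a common initial point, the key being that in such a variation every datum one needs in $\cH'$ --- the initial velocity $\Phi v$ and the value $\Phi w$ of $D_t\tilde J$ at $t=0$ --- is in fact a first derivative of $F$. The remaining points --- existence and joint analyticity of $\gamma_s$ and $\tilde\gamma_s$ via Cauchy--Kovalevskaya and uniqueness of analytic geodesics, the density and continuity passages, the polarization of the curvature tensor, and the validity of the relevant infinite dimensional Riemannian geometry (Jacobi fields, Taylor expansion along parallel transport) --- are routine against the background collected in \cite{L2}.
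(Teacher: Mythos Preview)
Your argument is correct and follows the paper's strategy essentially step for step: reduce (4.1) to preservation of sectional curvature, build an analytic family of geodesics through $u$ via Cauchy--Kovalevskaya, use Lemma~3.1 and Theorem~1.2 plus uniqueness of geodesics to conclude the image family is again analytic, and then read off the sectional curvature from a length quantity that a $C^1$ isometry manifestly preserves, finishing by density. The only variation is cosmetic---the paper packages the family of geodesics as an ``exponential surface'' $e\colon\Delta\to\cH$ and recovers $K(P)$ from the circumference formula $L_r=2\pi r(1-K(P)r^2/6+o(r^2))$ of \cite[Lemma~7.1]{L2}, whereas you use the equivalent Jacobi field expansion $|J(t)|^2=t^2|w|^2-\tfrac{t^4}{3}\langle R(w,v)v,w\rangle+O(t^5)$; both extract the same sectional curvature from the same underlying fan of geodesics.
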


As we will see, the proof would be very quick if we had assumed that $F$ was $C^2$. To prove for $C^1$  isometries, we need a substitute for the exponential map: an exponential surface.
\begin{defn} An exponential surface is a $C^1$ map $e\colon\Delta\to\cH$ of some disc $\Delta\subset\bR^2$ centered at the origin, whose restrictions to radii of $\Delta$ are unit speed geodesics. 
\end{defn}
In particular,  $e_*$ is isometric between $T_0\Delta$, with the Euclidean metric, and  its image, with the metric inherited from $\cH$.
\begin{lem}If $\omega_u$ is analytic and $L\colon T_0\bR^2\to T_u\cH$ is a linear map that is an isometry on a plane $P$ consinsting of analytic $\xi\in  T_u\cH$, then there is a disc $\Delta\subset\bR^2$ and a unique exponential surface $e\colon\Delta\to\cH$ such that $e_*|T_0\Delta=L$. 
This $e$ is very analytic.
\end{lem}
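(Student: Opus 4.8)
The statement of Lemma 4.5 asks for a unique exponential surface $e\colon\Delta\to\cH$ with $e_*|T_0\Delta=L$, given that $L$ is an isometry on a plane $P$ of analytic tangent vectors at a point $u$ with $\omega_u$ analytic. (Note $L$ need only be isometric on $P$, so implicitly $L(T_0\bR^2)=L(P)$, and we should read $L=L\circ(\text{projection onto }P)$; an exponential surface only sees $e_*$ restricted to directions, and the surface is swept out by geodesics in the directions $L(v)$, $v$ a unit vector in $\bR^2$, so only $L|P$ matters. I will treat $P=T_0\bR^2$.) The natural approach is to build $e$ as a solution of the geodesic PDE with prescribed Cauchy data along a line, using Cauchy--Kovalevskaya, and then to verify that the resulting solution genuinely has the exponential-surface property (radii are unit-speed geodesics) and is unique.

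First I would set up coordinates: write points of $\Delta$ as $(r,\theta)$ or rather as $(s,t)\in\bR^2$, and recall from section 2 that a geodesic $f$ in $\cH$ satisfies $d^2f/dt^2=\tfrac12|\grad\,df/dt|^2$, the $\grad$ and $|\ |$ taken with respect to $\omega_{f(t)}$. I want a two-parameter family: fix the first coordinate as an affine parameter along a ``central'' geodesic and the second as the geodesic parameter along each ray. Concretely, pick a unit vector $v_0\in T_0\bR^2$, let $\gamma$ be the geodesic through $u$ with initial velocity $L v_0$ — this exists and is analytic (very analytic as a map of a $1$-manifold) by the Cauchy--Kovalevskaya argument of Mabuchi recalled after Theorem 2.2, since $\omega_u$ and $L v_0$ are analytic. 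Along $\gamma$ I have an analytic family of initial points $\gamma(\sigma)$ and I prescribe initial velocities by parallel-transporting $L v_0^\perp$ (the image under $L$ of the orthogonal unit vector) — but parallel transport in $\cH$ is itself governed by an ODE along $\gamma$ with analytic coefficients, so this velocity field is very analytic in $\sigma$. Now solve the geodesic equation with this analytic Cauchy data transverse to $\gamma$: Cauchy--Kovalevskaya (applied on $X$ realized as an analytic submanifold of some $\bR^m$, exactly as in the proof of Proposition 2.1) produces an analytic — hence very analytic — solution $v(s,t,x)$ on a neighborhood of $0$, giving $e(s,t)=v(s,t,\cdot)\in\cH$ after shrinking. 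By construction, for each fixed $s$ the curve $t\mapsto e(s,t)$ is a geodesic; and $t\mapsto e(0,t)$ starts at $u$ with velocity $Lv_0^\perp$. I still need the radii of $\Delta$ to be geodesics, not the coordinate lines $\{s=\text{const}\}$.

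The key step — and the main obstacle — is to pass from this ``product of a geodesic with geodesics'' to a genuine exponential surface, i.e. to show the radii are unit-speed geodesics and that $e_*|T_0\Delta=L$. Here is where I would invoke the Gauss-lemma-type rigidity: the surface $e$ I built is a totally geodesic-in-one-direction ruled surface, and I want to show it is (reparametrizes to) the exponential image of the plane $L(T_0\bR^2)$ under $\exp_u$. The cleanest route is to observe that $e_*$ is then automatically an isometry onto its image: the induced metric $e^*(\text{Mabuchi metric})$ on $\Delta$ has Gaussian curvature equal to the sectional curvature of $\cH$ along the tangent $2$-plane, which by section 4's computation of the curvature is a constant (the curvature of $\cH$ being covariantly constant — indeed, as recalled in the introduction, this is the central structural fact), and a surface metric of constant curvature with a geodesic polar-type parametrization is determined; matching the initial data $L$ pins down the parametrization, forcing the radii to be geodesics. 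Concretely I would instead build $e$ directly in polar form: let $e(r,\theta)$ be the geodesic at parameter $r$ in the direction $L(\cos\theta,\sin\theta)$; the content is that this is $C^1$ (indeed analytic) jointly in $(r,\theta)$ near $r=0$, which again follows from Cauchy--Kovalevskaya with data depending analytically on $\theta$, together with the fact that the apparent singularity at $r=0$ is removable because $L$ is linear and isometric — this is the classical smoothness-of-$\exp$ argument, transplanted to the analytic category where it is cleaner.

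For uniqueness: any exponential surface $e'$ with $e'_*|T_0\Delta=L$ has radii that are unit-speed geodesics with the prescribed initial velocities $L(\cos\theta,\sin\theta)$; by the uniqueness of geodesics given initial position and velocity — which, as noted in section 2 after Theorem 2.4, holds in $\cH$ by Bedford--Burns or Rubinstein--Zelditch (\cite{BB}, \cite{RZ}) — each such radius is determined, so $e'=e$ wherever both are defined. Finally, very analyticity of $e$ is exactly the output of Cauchy--Kovalevskaya: the defining function $v(r,\theta,x)$, or in rectangular coordinates $v(s,t,x)$, is real analytic on a neighborhood of the origin in $\Delta\times X$, which is the definition of very analytic (Definition 3.2). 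The one genuinely delicate point deserving care in the full proof is the behavior at $r=0$: Cauchy--Kovalevskaya with polar data has a coordinate singularity there, so one should either do the construction in rectangular coordinates transverse to a central geodesic as above and then argue a posteriori that the radial curves are geodesics using constancy of curvature, or quote the standard removability argument; I expect this is where the real work lies.
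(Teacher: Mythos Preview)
Your uniqueness argument is fine and matches the paper. The gap is in existence, precisely at the point you flag as ``where the real work lies.''

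Your first route (build a ruled surface from a central geodesic with parallel-transported transverse velocities, then argue via curvature that the radii must be geodesics) does not work. The curvature of $\cH$ is covariantly constant as a tensor, but its sectional curvature is \emph{not} constant: $K(P)=-\|\{\xi,\eta\}\|^2/4$ depends on the plane $P$. So the induced metric on your surface need not have constant Gaussian curvature, and in any case $e_*$ is certainly not an isometry onto its image (exponential maps are not isometries except in flat space). There is no Gauss-lemma shortcut here that turns your ruled surface into an exponential surface.

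Your second route (polar coordinates, then claim removability at $r=0$) is the right instinct but you do not actually carry it out, and ``standard removability'' is exactly what must be proved. The paper handles this with a clean scaling trick that you are missing: introduce a \emph{third} variable $t$ and solve, by Cauchy--Kovalevskaya on a neighborhood of $\{0\}\times X\subset\bR^3\times X$,
\[
\frac{\partial^2 v}{\partial t^2}=\tfrac12\bigl|\gru\,\partial v/\partial t\bigr|^2,\qquad v(0,\sigma,\tau,\cdot)=u,\quad \frac{\partial v}{\partial t}(0,\sigma,\tau,\cdot)=\sigma\xi+\tau\eta,
\]
with $\xi=L(\partial/\partial\sigma)$, $\eta=L(\partial/\partial\tau)$. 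The geodesic equation is invariant under the reparametrization $v_\lambda(t,\sigma,\tau,x)=v(t/\lambda,\lambda\sigma,\lambda\tau,x)$, and uniqueness of analytic solutions forces $v_\lambda=v$, i.e.\ $v(\lambda t,\sigma,\tau,x)=v(t,\lambda\sigma,\lambda\tau,x)$. This single identity does two things at once: it extends the domain so that $v$ is analytic on $(-2,2)\times\Delta\times X$ for a small disc $\Delta$, and it shows that $e(\sigma,\tau):=v(1,\sigma,\tau,\cdot)$ restricted to a radius $t\mapsto(t\sigma_0,t\tau_0)$ equals $v(t,\sigma_0,\tau_0,\cdot)$, a unit-speed geodesic. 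No polar singularity, no curvature argument, no a posteriori verification needed.
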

\begin{proof}Uniqueness follows because a geodesic is uniquely determined by its initial position and velocity (\cite[Proposition 1.1]{BB}, \cite[Theorem 1.1]{RZ}). 
As to existence and analyticity, write $\sigma,\tau$ for the coordinates on $\bR^2$, and let $\xi=L(\partial/\partial\sigma)$, $\eta=L(\partial/\partial\tau)$, orthonormal vectors. We can assume that $\omega$ is analytic; then $u$ will be analytic, too.
By  the Cauchy--Kovalevskaya theorem on a neighborhood of $\{0\}\times X\subset\bR^3\times X$ we can solve the equation
\begin{align*}
\frac{\partial^2v(t,\sigma,\tau,x)}{\partial t^2}&=\frac 12\Big|\grad\frac{\partial v(t,\sigma,\tau,x)}{\partial t}\Big|^2\\
v(0,\sigma,\tau,\cdot)&=u,\qquad \frac{\partial v}{\partial t}(0,\sigma,\tau,\cdot)=\sigma\xi+\tau\eta,
\end{align*}
where $\text{grad}$, taken on $X$, and length $|\quad|$ are with respect to the metric determined by $\omega+i\partial\bar\partial v(t,\sigma,\tau,\cdot)$.
The meaning of these equations is that for fixed $\sigma,\tau$ the curve $t\mapsto v(t,\sigma,\tau,\cdot)\in\cH$ is a geodesic through $u$, of speed $\sqrt{\sigma^2+\tau^2}$. 
Any solution can be reparametrized to another solution $v_\lambda(t,\sigma,\tau,x)=v(t/\lambda,\lambda\sigma,\lambda\tau,x)$, $\lambda\in\bR\setminus\{0\}$. Since the germ of an analytic solution is unique, $v_\lambda=v$, or
$$
v(\lambda t,\sigma,\tau,x)=v(t,\lambda\sigma,\lambda\tau,x).
$$
In particular, $v$ is analytic on $(-2,2)\times\Delta\times X$ if $\Delta$ is a sufficiently small disc centered at $0$. We let $e(\sigma,\tau)=v(1,\sigma,\tau,\cdot)\in\cH$ for $(\sigma,\tau)\in\Delta$. 
Clearly $e$ is very analytic, and its restriction to a radius $t\mapsto(\sigma_0t,\tau_0t)$ of $\Delta$ (where $\sigma_0^2+\tau_0^2=1$) is
$$
t\mapsto e(\sigma_0t,\tau_0t)=v(t,\sigma_0,\tau_0,\cdot),
$$
a unit speed geodesic. Therefore $e$ is the exponential surface sought.
\end{proof}

\begin{proof}[Proof of Lemma 4.1]By \cite[Theorem 4.3]{M}, the curvature $R$ of the Mabuchi metric acts on $T_u\cH$ by
$$
R(u,\xi,\eta)\zeta=-\{\{\xi,\eta\},\zeta\}/4,\qquad \xi,\eta,\zeta\in C^\infty (X)\approx T_u\cH,
$$
and similarly for the curvature $R'$ of $\cH'$. (Mabuchi's formula does not contain the factor $-1/4$, due to differing
 conventions).
Therefore what we need to show is that $R$ and $R'$ correspond under isometries. This is rather obvious for $C^2$ isometries, see \cite[Lemma 5.1]{L2}, but not so for the $C^1$ isometries we are studying here. 
Since curvature at $u\in\cH$ is determined by sectional curvatures $K(P)$ along planes $P\subset T_u\cH$, proving
\begin{equation}
K'(F_*P)=K(P),
\end{equation}
where $K'$ denotes sectional curvature in $\cH'$, would suffice.

Sectional curvature can be computed from exponential surfaces.
Suppose an exponential surface $e$ is even $C^\infty$. Let  $e_*T_0\Delta=P\subset T_u\cH$ and $L_r$ be the length of the circle $[0,2\pi]\ni\theta\mapsto e(r\cos\theta,r\sin\theta)$. Then
\begin{equation}
L_r=2\pi r\big(1-K(P)r^2/6+o(r^2)\big),\qquad\text{as }r\to 0,
\end{equation}
see \cite[Lemma 7.1]{L2}.

We prove (4.2) first when $\omega_u$ is analytic and $P$ is spanned by analytic $\xi,\eta$.
By Lemma 4.3 there is a very analytic exponential surface $e\colon \Delta\to\cU$ such that $P=e_*T_0\Delta$.
By Lemma 3.1 $F$ maps geodesics to geodesics, with the same speed; hence $e'=F\circ e\colon\Delta\to\cH'$ is also an exponential surface. 
Finally, by Theorem 1.2 $u'=F(u) $ and $\xi'=F_*\xi$, $\eta'=F_*\eta$ are analytic. It follows, again by Lemma 4.3, that $e'$ is very analytic near the origin. 
Since the circles
$$
\theta\mapsto e(r\cos \theta,r\sin \theta)\quad\text{and}\quad \theta\mapsto e'(r\cos \theta,r\sin \theta),\qquad 0\le\theta\le 2\pi,
$$
have the same lengths $L_r=L'_r$,  (4.3) implies $K'(F_*P)=K(P)$.

Once we know this when $u,\xi,\eta$ are analytic, the general case follows by density, since $K(\xi\wedge\eta)$ depends continuously on $\xi,\eta$.
\end{proof}

For the rest of this section, until the last two paragraphs, we will work with arbitrary connected, compact, smooth symplectic manifolds $(X,\omega),(X',\omega')$ that we now fix, and
the induced Poisson brackets $\{\ , \ \}$, $\{\ , \ \}'$ on $C^\infty(X)$, $C^\infty(X')$.
We will obtain the general form of linear isomorphisms $\Phi\colon C^\infty(X)\to C^\infty (X')$ that satisfy (4.1), eventually also using that $\Phi$
preserves the $L^2$ norm.

\begin{lem}Suppose a linear isomorphism
$\Phi\colon C^\infty (X)\to C^\infty (X')$ satisfies (4.1) for\newline
all $\xi,\eta,\zeta\in C^\infty (X)$.
Then either $\{\Phi\xi,\Phi\eta\}'=\Phi\{\xi,\eta\}$ for all $\xi,\eta\in C^\infty(X)$ or $\{\Phi\xi,\Phi\eta\}'=-\Phi\{\xi,\eta\}$ for all $\xi,\eta\in C^\infty(X)$.
\end{lem}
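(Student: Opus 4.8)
The plan is to convert (4.1) into an identity among operators and then invoke the algebraic rigidity of the Poisson Lie algebra of a compact symplectic manifold. Introduce, for $\xi\in C^\infty(X)$, the operators $\ad_\xi=\{\xi,\cdot\}$ on $C^\infty(X)$ and the conjugated operators $\ad^Q_\xi:=\Phi^{-1}\ad'_{\Phi\xi}\Phi=\Phi^{-1}\{\Phi\xi,\Phi\cdot\}'$, both linear in $\xi$. The map $\Phi$ transports the Poisson bracket of $C^\infty(X')$ to a Lie bracket $[\xi,\eta]_Q:=\Phi^{-1}\{\Phi\xi,\Phi\eta\}'$ on $C^\infty(X)$, so $[\ad^Q_\xi,\ad^Q_\eta]=\ad^Q_{[\xi,\eta]_Q}$ by the Jacobi identity for $[\ ,\ ]_Q$; rewriting (4.1) as $\{\Phi[\xi,\eta]_Q,\Phi\zeta\}'=\Phi\{\{\xi,\eta\},\zeta\}$, i.e. $\ad^Q_{[\xi,\eta]_Q}=\ad_{\{\xi,\eta\}}$, one sees that (4.1) is equivalent to $[\ad^Q_\xi,\ad^Q_\eta]=[\ad_\xi,\ad_\eta]$ for all $\xi,\eta$. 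Two preliminary facts, valid because $X,X'$ are compact and connected: since a Poisson bracket on a compact manifold integrates to $0$, a constant bracket vanishes, and combining this with (4.1) and the surjectivity of $\Phi$ gives $\{\xi,\eta\}=0\iff\{\Phi\xi,\Phi\eta\}'=0$; in particular, since the center of each Poisson algebra is the constants, $\Phi$ maps constants to constants and descends to an isomorphism $C^\infty(X)/\mathrm{const}\to C^\infty(X')/\mathrm{const}$.

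Let $\mathcal B\subset C^\infty(X)$ be the span of all Poisson brackets; on a compact connected symplectic manifold $\mathcal B$ is the hyperplane $\{f\colon\int_X f\,\omega^n=0\}$ and $(\mathcal B,\{\ ,\ \})$ is a simple Lie algebra (equivalently, the Lie algebra of Hamiltonian vector fields is simple). Put $\delta_\xi:=\ad^Q_\xi-\ad_\xi$; it is linear in $\xi$, kills constants, and hence is determined by $\delta|_{\mathcal B}$. From (4.1) and the Jacobi identity on $X'$ one first derives the mixed relation $[\ad^Q_\xi,\ad_h]=\ad^Q_{\{\xi,h\}}$ for every $h\in\mathcal B$; substituting this and the operator identity of the first paragraph into one another yields, for $h,k\in\mathcal B$, $[\ad_h,\delta_k]=\delta_{\{h,k\}}$ and then $[\delta_h,\delta_k]=-2\,\delta_{\{h,k\}}$. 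Consequently $\rho:=-\tfrac12\,\delta|_{\mathcal B}$ is a representation of $\mathcal B$ on $C^\infty(X)$, it is $\mathcal B$-equivariant as a map from the adjoint module ($[\ad_h,\rho_k]=\rho_{\{h,k\}}$), and $\sigma:=\ad|_{\mathcal B}-\rho$ is a second representation with $[\sigma_h,\rho_k]=0$ and $\sigma+\rho=\ad|_{\mathcal B}$. All of $\ad,\ad^Q,\rho,\sigma$ kill the constants, and $\ad|_{\mathcal B}$ acts on $C^\infty(X)=\mathcal B\oplus\mathrm{const}$ as the adjoint representation plus a trivial summand.

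Now $\ker\rho$ is a submodule of the adjoint representation, hence an ideal of $\mathcal B$, so by simplicity $\ker\rho=\mathcal B$ or $\ker\rho=0$. If $\ker\rho=\mathcal B$, then $\delta$ vanishes on $\mathcal B$, i.e. $\ad^Q_\xi=\ad_\xi$ for $\xi\in\mathcal B$; writing an arbitrary $\xi\in C^\infty(X)$ as an element of $\mathcal B$ plus a constant (constants do not affect Poisson brackets), we get $\{\Phi\xi,\Phi\eta\}'=\Phi\{\xi,\eta\}$ for all $\xi,\eta$, the first alternative. If $\ker\rho=0$, then $\rho$ and $\sigma$ are commuting representations of $\mathcal B$ whose sum $\ad|_{\mathcal B}$ acts irreducibly on $\mathcal B$; since the spaces of $\sigma$-invariants and of $\rho$-invariants in $C^\infty(X)/\mathrm{const}$ are each both $\rho$- and $\sigma$-invariant, hence $\ad$-invariant, a Schur/tensor-decomposition argument for two commuting representations summing to an irreducible one forces one of $\rho,\sigma$ to vanish; as $\rho\neq0$ this gives $\sigma=0$, i.e. $\delta_h=-2\,\ad_h$ on $\mathcal B$, and the same constant-shift argument yields $\{\Phi\xi,\Phi\eta\}'=-\Phi\{\xi,\eta\}$ for all $\xi,\eta$, the second alternative.

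The main obstacle is the dichotomy in the last paragraph. It rests on two nontrivial ingredients: the simplicity of the Poisson Lie algebra $C^\infty(X)/\mathrm{const}$ of a compact connected symplectic manifold (so that the adjoint representation is irreducible and $\ker\rho$ can only be $0$ or everything), and, in the case $\ker\rho=0$, the assertion that two commuting representations of a simple Lie algebra whose sum is the (absolutely irreducible) adjoint representation cannot both be nonzero. In finite dimensions the latter follows from Jacobson density together with the decomposition $W=W_\sigma\boxtimes W_\rho$ of an irreducible $\mathfrak g\oplus\mathfrak g$-module and the fact that $\mathfrak g$ has no nontrivial one-dimensional representations; here $\mathcal B$ is infinite dimensional, so this step needs care. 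The extra information $\rho_k=-\tfrac12(\ad^Q_k-\ad_k)$, which keeps $\rho(\mathcal B)$ and $\sigma(\mathcal B)$ inside the concrete associative algebra generated by $\ad(\mathcal B)$ and $\ad^Q(\mathcal B)$, is what I would use to push the finite-dimensional reasoning through.
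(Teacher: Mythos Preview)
Your approach is genuinely different from the paper's and the operator identities you derive are correct: from (4.1) you correctly obtain $[\ad^Q_\xi,\ad^Q_\eta]=[\ad_\xi,\ad_\eta]$, then the mixed relation $[\ad^Q_\xi,\ad_h]=\ad^Q_{\{\xi,h\}}$ for $h\in\mathcal B$, and hence that $\rho=-\tfrac12\delta|_{\mathcal B}$ and $\sigma=\ad|_{\mathcal B}-\rho$ are commuting representations of $\mathcal B$ summing to $\ad|_{\mathcal B}$. The reduction of the problem to ``one of two commuting representations summing to the adjoint must vanish'' is an attractive reformulation.

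The gap is precisely where you say it is, and it is not minor. You need: if $\rho,\sigma$ are commuting representations of the infinite--dimensional simple Lie algebra $\mathcal B$ on $\mathcal B$ with $\rho+\sigma=\ad$, then one of them vanishes. Your sketch (``Schur/tensor decomposition'', $\sigma$- and $\rho$-invariants) does not close this. Knowing that the $\rho$-invariants in $\mathcal B$ form an $\ad$-submodule, hence are $0$ or $\mathcal B$, only tells you that $\rho$ has no nonzero invariants when $\rho\neq 0$; it does not force $\sigma=0$. The Jacobson density / tensor factorization argument you allude to requires complete reducibility or at least a highest-weight theory that is simply unavailable for $\mathcal B=C^\infty(X)/\mathrm{const}$. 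The vague remark that $\rho(\mathcal B),\sigma(\mathcal B)$ live in the associative algebra generated by $\ad(\mathcal B)$ and $\ad^Q(\mathcal B)$ does not by itself supply the missing finiteness. (You also use, without citation, that $\mathcal B$ is simple; this is true but nontrivial and should be referenced.)

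For comparison, the paper avoids this infinite--dimensional obstacle entirely by going pointwise. It first shows, via the triple bracket, that $\Phi$ preserves the family of infinite--codimensional maximal ideals $I_p$, giving a bijection $\psi\colon X\to X'$; then it identifies, again through triple--bracket characterizations, subalgebras $B_p\subset A_p\subset C^\infty(X)$ with $A_p/B_p\cong\mathrm{sp}(2n,\bR)$, and shows $\Phi$ descends to a triple--bracket isomorphism of these finite--dimensional simple quotients. Cartan's theorem (Theorem~4.5) then gives a sign $\varepsilon_p=\pm1$ at each point, and a short bootstrapping plus continuity argument shows $\varepsilon_p$ is globally constant. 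So the paper's key maneuver is to trade the hard infinite--dimensional rigidity you are after for a well--understood finite--dimensional one, applied fiberwise. If you want to salvage your global approach, you would need an honest replacement for the tensor--factorization step in infinite dimensions; absent that, the pointwise reduction is what makes the argument go through.
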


A vector space $V$ endowed with a trilinear map $V\times V\times V\to V$ satisfying certain axioms is called a Lie triple system.
An example is $C^\infty(X)=V$ endowed with the triple bracket $\{\{\ , \ \},\}$.
Lemma 4.4 says that an isomorphism of these Lie triple systems is either an isomorphism or an anti--isomorphism of the corresponding  Lie algebras.
A related finite dimensional result,  that we are going to use, is due to E. Cartan.

\begin{thm}
Let $(\fg,[\ ,])$ and $(\fg',[\ ,]')$ be finite dimensional Lie algebras over $\bR$ and $f\colon\fg\to\fg'$ a linear isomorphism that satisfies
$$
[[f(a),f(b)]',f(c)]'=f[[a,b],c]\quad\text{ for }a,b,c\in\fg.
$$
If $\fg$ is simple, then either $[f(a),f(b)]'=f[a,b]$ for all $a,b\in\fg$ or $[f(a),f(b)]' =-f[a,b]$ for all $a,b\in\fg$.
\end{thm}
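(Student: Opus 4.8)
The plan is to reduce the statement to a fact about the adjoint representation and then invoke the rigidity of finite-dimensional simple Lie algebras. Write $\mathrm{ad}\colon\mathfrak g\to\mathfrak{gl}(\mathfrak g)$ and $\mathrm{ad}'\colon\mathfrak g'\to\mathfrak{gl}(\mathfrak g')$ for the adjoint representations. The hypothesis $[[f(a),f(b)]',f(c)]'=f[[a,b],c]$ says precisely that for all $a,b\in\mathfrak g$ the operator $f\circ\mathrm{ad}([a,b])\circ f^{-1}$ on $\mathfrak g'$ equals $\mathrm{ad}'([f(a),f(b)]')$. Since $\mathfrak g$ is simple, it equals its own derived algebra, so the brackets $[a,b]$ span $\mathfrak g$; hence $f\circ\mathrm{ad}(c)\circ f^{-1}=\mathrm{ad}'(g(c))$ for a well-defined linear map $g\colon\mathfrak g\to\mathfrak g'$, where $g(c)$ is characterized by $\mathrm{ad}'(g(c))=f\circ\mathrm{ad}(c)\circ f^{-1}$ (this uses that $\mathrm{ad}'$ is injective, i.e. $\mathfrak g'$ has trivial center, which follows because $\mathfrak g'\cong\mathfrak g$ as Lie triple systems forces $\mathfrak g'$ simple as well — or one argues directly that the image $f(\mathfrak g)$ generates, so $\mathfrak g'$ is perfect with trivial center). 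The map $g$ is a linear isomorphism, and from $\mathrm{ad}'(g([a,b]))=f\circ\mathrm{ad}([a,b])\circ f^{-1}=[\mathrm{ad}'(g(a)),\mathrm{ad}'(g(b))]=\mathrm{ad}'([g(a),g(b)]')$ we get, again by injectivity of $\mathrm{ad}'$, that $g$ is a genuine Lie algebra isomorphism $\mathfrak g\to\mathfrak g'$.

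The second step compares $f$ with $g$. Both intertwine the adjoint actions in the sense that $f\circ\mathrm{ad}(c)=\mathrm{ad}'(g(c))\circ f$ for all $c$; that is, $f$ is a morphism from $\mathfrak g$ with its adjoint $\mathfrak g$-module structure to $\mathfrak g'$ viewed as a $\mathfrak g$-module via $g$. But $g$ is also such a morphism (it is a Lie isomorphism). Hence $h=g^{-1}\circ f\colon\mathfrak g\to\mathfrak g$ is a $\mathfrak g$-module endomorphism of the adjoint representation, i.e. $h\circ\mathrm{ad}(c)=\mathrm{ad}(c)\circ h$ for all $c\in\mathfrak g$. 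Since $\mathfrak g$ is simple, its adjoint representation is irreducible, so by Schur's lemma over $\bR$ the commutant is a division algebra, and $h$ lies in it; the point now is to extract the two cases.

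The third step is to pin down this commutant. Over $\bR$ Schur's lemma gives that $\mathrm{End}_{\mathfrak g}(\mathfrak g)$ is $\bR$, $\bC$, or $\mathbb H$. Here is where I would use the extra structure coming from the triple-bracket hypothesis: applying the original identity once more, or comparing $f[[a,b],c]$ computed two ways, one finds that $h$ must satisfy $h[a,b]=[h(a),b]=[a,h(b)]$, and from $[h(a),b]=[a,h(b)]$ together with the invariance of the Killing form $B$ one gets $B(h(a),[b,c])+B([b,a],h(c))=0$ type relations forcing $h$ to be $B$-self-adjoint; combined with commutation with all $\mathrm{ad}(c)$ and irreducibility, $h^2$ is a scalar $\lambda\,\mathrm{id}$ with $\lambda>0$ (as $h$ is invertible and self-adjoint for a form of definite-enough signature on the relevant pieces), so $h=\pm\sqrt{\lambda}\,\mathrm{id}$. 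Rescaling, one shows $\sqrt\lambda=1$ by testing the identity on a single pair, say $f[[a,b],c]=[[f(a),f(b)]',f(c)]'$ with $f=\sqrt\lambda\, g$ expands to $\sqrt\lambda\,g[[a,b],c]=\lambda^{3/2}g[[a,b],c]$ wait — more carefully, $f=\sqrt\lambda\,g$ gives $[[f(a),f(b)]',f(c)]'=\lambda^{3/2}g[[a,b],c]$ while $f[[a,b],c]=\sqrt\lambda\,g[[a,b],c]$, so $\lambda^{3/2}=\sqrt\lambda$, hence $\lambda=1$. Thus $f=\pm g$, and $g$ is a Lie isomorphism, which is exactly the dichotomy claimed.

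**The main obstacle** I expect is the third step: over $\bR$ one must genuinely rule out that $h$ is multiplication by a nonreal element of $\bC$ or $\mathbb H$ inside the commutant. The self-adjointness argument above is the natural route, but making "self-adjoint plus central-in-the-commutant forces real scalar" airtight requires care about the signature of the Killing form (it is definite on $\mathfrak g$ only in the compact case) and about whether the commutant could be $\bC$ with $h$ corresponding to $i$. A cleaner alternative, which I would fall back on if the signature bookkeeping gets unpleasant, is to complexify: pass to $\mathfrak g_{\bC}$, where $\mathfrak g_{\bC}$ is either simple or a sum of two conjugate simples; in the simple case Schur over $\bC$ gives $h=\lambda\,\mathrm{id}$ outright and the scaling computation $\lambda^{3/2}=\lambda^{1/2}$ forces $\lambda=\pm1$, whence $f=\pm g$; in the non-simple (i.e. $\mathfrak g$ a complex simple Lie algebra regarded over $\bR$) case $h$ is a pair of scalars swapped by the real structure, and reality of $h$ together with $\lambda^2=1$ on each factor again yields $h=\pm\mathrm{id}$. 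Either way the conclusion is that $f$ is, up to sign, a Lie algebra isomorphism.
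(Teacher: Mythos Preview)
Your approach is correct and genuinely different from the paper's. The paper (Appendix, Theorem~8.1) transports $[\ ,\ ]'$ to $V=\fg$ via $f$, notes that even products $\ad_{a_1}\cdots\ad_{a_{2k}}$ coincide for the two brackets, and then runs a root-theoretic argument: it finds a common Cartan subalgebra through Fitting null components, shows the two root systems coincide, partitions the nonzero roots according to whether $\fg'_\alpha=\fg_\alpha$ or $\fg'_\alpha=\fg_{-\alpha}$, proves this partition is compatible with root addition, and extracts an ideal decomposition $V=V_+\oplus V_-$ on which $[\ ]'=\pm[\ ]$. Galois descent handles non-split Cartan subalgebras. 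This structure-theoretic route proves the stronger semisimple statement directly, which your method does not.

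Your argument, by contrast, avoids root theory altogether. The construction of the Lie isomorphism $g$ with $\ad'(g(c))=f\,\ad(c)\,f^{-1}$ is clean; the triviality of the center of $\fg'$ follows from the dimension count $\dim\ad'(\fg')\ge\dim\fg=\dim\fg'$, since $c\mapsto f\,\ad(c)\,f^{-1}$ is injective with image in $\ad'(\fg')$. The only wobble is your third step. The $B$-self-adjointness of $h$ is correct but does \emph{not} rule out the complex-centroid case: when $\fg$ is a complex simple Lie algebra viewed over $\bR$, the centroid is $\bC$ and the complex structure $J$ is itself $B$-self-adjoint, so self-adjointness alone cannot force $h\in\bR\cdot\id$. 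Your complexification fallback does work, but you are one line away from something cleaner. Once you know $h[a,b]=[h(a),b]=[a,h(b)]$, the map $h$ lies in the centroid of $\fg$, which for a simple real Lie algebra is a commutative finite-dimensional division $\bR$-algebra, hence the field $\bR$ or $\bC$. Substituting $f=g\circ h$ into the triple-bracket hypothesis gives $g\,h^3[[a,b],c]=g\,h[[a,b],c]$ for all $a,b,c$; since $[[\fg,\fg],\fg]=\fg$ and $g$ is invertible, $h^3=h$, and as $h$ is a unit in a field, $h=\pm\id$. No Killing-form signature analysis and no base change are needed.
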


In \cite{CE} Cartan does not provide a proof but gives a hint what tools to use.
In the Appendix we will write out a proof that, sure enough, uses the Cartan--Killing theory of semisimple Lie algebras.

To prove Lemma 4.4, in addition to Cartan's theorem we will also need

\begin{lem}The commutator algebra $\{C^\infty(X),C^\infty(X)\}$ is
\begin{equation}
N=\{\xi\in C^\infty(X)\colon\int_X\xi\omega^n=0\},
\end{equation}
where $\dim_{\bR}X=2n$.
More precisely, suppose $(\zeta_1,\ldots,\zeta_m)\colon X\to\bR^m$ is a smooth embedding.
Then $\xi\in C^\infty(X)$ is in $N$ if and only if it can be represented $\xi=\sum_1^m \{\eta_j,\zeta_j\}$ with suitable $\eta_j\in C^\infty(X)$.
\end{lem}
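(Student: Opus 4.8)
The plan is to prove the ``more precisely'' statement; the first sentence then follows, since a single bracket $\{\eta,\zeta\}$ lies in the linear subspace $N$ (shown below), so $\{C^\infty(X),C^\infty(X)\}\subset N$, while the ``more precisely'' part applied to a Whitney embedding gives the reverse inclusion. Everything hinges on one reformulation. Write $\mu=\omega^n$ for the induced volume form, $\sgru\,h$ for the Hamiltonian vector field of $h\in C^\infty(X)$, and $\text{div}$ for divergence with respect to $\mu$. Then $\{\eta,\zeta\}=\pm\,\mathcal L_{\sgru\,\zeta}\,\eta$ (with a sign depending on conventions that will be irrelevant), and since $\mathcal L_{\sgru\,\zeta}\,\mu=0$ the Leibniz rule $\text{div}(gW)=g\,\text{div}\,W+\mathcal L_W g$ gives $\text{div}(\eta\,\sgru\,\zeta)=\mathcal L_{\sgru\,\zeta}\,\eta=\pm\{\eta,\zeta\}$, whence
\[
\sum_{j=1}^m\{\eta_j,\zeta_j\}=\pm\,\text{div}\Big(\sum_{j=1}^m\eta_j\,\sgru\,\zeta_j\Big).
\]
As $X$ is closed, the divergence theorem makes the right side integrate to $0$ against $\mu$, which is the trivial inclusion $\sum_j\{\,\cdot\,,\zeta_j\}\subset N$ (and, taking $m=1$, $\{\eta,\zeta\}\in N$).

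For the substantive inclusion I would combine two standard facts. First, since $(\zeta_1,\dots,\zeta_m)\colon X\to\bR^m$ is an immersion, the covectors $d\zeta_1(x),\dots,d\zeta_m(x)$ span $T_x^*X$ for every $x$; applying the isomorphism $T^*X\to TX$ furnished by $\omega$, the vectors $\sgru\,\zeta_1(x),\dots,\sgru\,\zeta_m(x)$ span $T_xX$ for every $x$. Hence the bundle map $X\times\bR^m\to TX$, $(x,c)\mapsto\sum_j c_j\,\sgru\,\zeta_j(x)$, is surjective on each fiber, so — splitting off its kernel sub-bundle with any Riemannian metric — it admits a smooth right inverse: every vector field $V$ on $X$ can be written $V=\sum_{j=1}^m\eta_j\,\sgru\,\zeta_j$ with $\eta_j\in C^\infty(X)$. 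Second, given $\xi\in N$, i.e.\ $\int_X\xi\,\mu=0$, the top-degree form $\xi\,\mu$ is exact (here one uses that $X$ is compact, connected and oriented, so $H^{2n}_{\text{dR}}(X)\cong\bR$ via integration); writing $\xi\,\mu=d\beta$ and using that $W\mapsto\iota_W\mu$ is an isomorphism onto $(2n-1)$--forms, there is a vector field $V$ with $\text{div}\,V=\xi$ (replace $V$ by $-V$ to absorb the sign above). Applying the first fact to this $V$ and then the displayed identity yields $\sum_j\{\eta_j,\zeta_j\}=\xi$, as required.

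The argument is short and I do not foresee a genuine obstacle; the crux is the observation that $\sum_j\{\eta_j,\zeta_j\}$ is, up to sign, a $\mu$--divergence of a vector field whose direction is constrained to the frame $\{\sgru\,\zeta_j\}$, after which the problem reduces to solving $\text{div}\,V=\xi$, i.e.\ to $H^{2n}_{\text{dR}}(X)\cong\bR$ — and for that, the connectedness of $X$ is essential (a disconnected $X$ would make $\{C^\infty(X),C^\infty(X)\}$ have codimension equal to the number of components). The one technical point deserving a sentence is that a fiberwise--surjective bundle morphism admits a global smooth right inverse, which is routine. Sign conventions for the Poisson bracket and the Hamiltonian vector field affect nothing beyond an overall $\pm$ and may be ignored throughout.
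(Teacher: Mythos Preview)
Your argument is correct. The key identity $\sum_j\{\eta_j,\zeta_j\}=\pm\,\text{div}_\mu\big(\sum_j\eta_j\,\sgru\,\zeta_j\big)$ (using $\mathcal L_{\sgru\,\zeta}\mu=0$) cleanly reduces both inclusions to elementary facts: the divergence theorem for $\{C^\infty(X),C^\infty(X)\}\subset N$, and the combination of $H^{2n}_{\text{dR}}(X)\cong\bR$ with a smooth splitting of the surjection $X\times\bR^m\to TX$, $(x,c)\mapsto\sum c_j\,\sgru\,\zeta_j(x)$, for the converse. Each step is standard and holds exactly under the hypotheses in force (compact, connected, oriented by $\mu$).

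As for comparison with the paper: there is nothing to compare. The paper does not prove this lemma; it only remarks that the statement is known and refers to Atkin--Grabowski \cite{AG}, specifically the proofs of their (5.2) Theorem and (2.6) Proposition. Your write-up therefore supplies a self-contained proof where the paper gives only a citation. The divergence reformulation you use is in the same spirit as what one finds in the literature on Poisson commutator ideals, so this is the expected argument rather than a novel route.

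Two cosmetic remarks. You can drop the sign hedging once you fix a convention, since the ambiguity cancels when you pass from $V$ to $-V$. And your parenthetical about disconnected $X$ is a nice sanity check but unnecessary here, since the ambient section fixes $X$ connected.
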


This fact has been discovered and rediscovered in different contexts.
Possibly the earliest reference is \cite{AG} by Atkin and Grabowski, where the result is shown if not explicitly formulated, see the proofs of (5.2) Theorem and (2.6) Proposition.

\begin{lem}For any $p\in X$
$$
I_p= \{\xi\in C^\infty(X)\colon\xi-\xi(p)\text{ vanishes to infinite order at }p\}
$$
is a maximal ideal of the Lie algebra $(C^\infty(X), \{\ , \ \})$.
These and $N$ are all the maximal ideals.
\end{lem}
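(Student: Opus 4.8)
The plan is to deduce everything from one assertion: \textbf{(C)} \emph{if $J\subseteq C^\infty(X)$ is an ideal not contained in $I_p$ for any $p$, then $N\subseteq J$}. Granting (C), note first that $I_p$ is an ideal—if $\xi\in I_p$ then every partial derivative of $\xi$ of positive order vanishes to infinite order at $p$, so in Darboux coordinates each term of $\{\xi,\eta\}=\sum_i(\partial_{q_i}\xi\,\partial_{p_i}\eta-\partial_{p_i}\xi\,\partial_{q_i}\eta)$ does too—and it is proper, since some function has nonzero differential at $p$. By Lemma 4.7, $N=\{C^\infty(X),C^\infty(X)\}$ is an ideal, proper (as $1\notin N$) of codimension $1$, hence maximal; also $N\not\subseteq I_p$, $1\in I_p\setminus N$, and for $p\neq q$ neither of $I_p,I_q$ contains the other (a function supported near $q$ that equals a Darboux coordinate near $p$ lies in $I_q\setminus I_p$). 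Now if $J$ is a maximal ideal contained in some $I_p$, then $J=I_p$; otherwise (C) gives $N\subseteq J$, and codimension $1$ forces $J=N$. Running the same dichotomy on an ideal strictly between $I_p$ and $C^\infty(X)$ shows $I_p$ is maximal as well: such an ideal either avoids every $I_q$ and so contains $N+\bR\cdot 1=C^\infty(X)$, or it sits inside some $I_q$, forcing $I_p\subsetneq I_q$, which is impossible. Hence the maximal ideals are exactly $N$ and the $I_p$, $p\in X$.

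To prove (C) I would first show that for each $p$ the covectors $d\theta(p)$, $\theta\in J$, fill out $T^*_pX$. Since $J\not\subseteq I_p$ there is $\xi\in J$ with $\partial^\alpha\xi(p)\neq0$ for some multi-index of minimal length $|\alpha|\geq1$ in Darboux coordinates at $p$. If $|\alpha|\geq2$, bracket $\xi$ with a global function agreeing near $p$ with an appropriate coordinate; since $\{q_i,\cdot\}=\partial/\partial p_i$ and $\{p_i,\cdot\}=-\partial/\partial q_i$ locally, and brackets stay in $J$, this yields an element of $J$ with a nonzero partial of order $|\alpha|-1$ at $p$; iterating produces $\psi\in J$ with $d\psi(p)\neq0$. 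Completing $\psi$ to Darboux coordinates near $p$ with $\psi$ as first coordinate, so that $\{\psi,f\}=\partial f/\partial p_1$ there, and taking $f=\chi\, p_1\,\ell$ with $\chi$ a cutoff equal to $1$ near $p$ and $\ell$ linear, one gets $\{\psi,f\}\in J$ with $d(\{\psi,f\})(p)$ an essentially arbitrary covector, as wanted.

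Then, by continuity and compactness, there are $p_1,\dots,p_r$ with neighborhoods $W_\mu\ni p_\mu$ covering $X$ and, for each $\mu$, functions $\theta^\mu_1,\dots,\theta^\mu_{2n}\in J$ whose differentials form a coframe on $W_\mu$; equivalently $\sgrad\theta^\mu_1,\dots,\sgrad\theta^\mu_{2n}$ is a frame on $W_\mu$. The proof of Lemma 4.7 uses only this spanning-at-every-point property (not injectivity of the associated map into Euclidean space): given $\xi\in N$, write $\xi=\sum_\mu\xi_\mu$ with $\xi_\mu$ supported in $W_\mu$ and $\int_X\xi_\mu\,\omega^n=0$ (possible because $X$ is connected); then $\xi_\mu=\operatorname{div}Z_\mu$ for a compactly supported vector field $Z_\mu$ on $W_\mu$; expand $Z_\mu=\sum_j h^\mu_j\,\sgrad\theta^\mu_j$ in the frame, and use $\operatorname{div}(h\,\sgrad\theta)=\{\theta,h\}$ (valid since $\sgrad\theta$ is $\omega^n$-divergence free) to conclude $\xi=\sum_{\mu,j}\{\theta^\mu_j,h^\mu_j\}\in J$. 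Hence $N\subseteq J$, which proves (C).

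The main obstacle is the first step of (C): converting the qualitative hypothesis ``$J\not\subseteq I_p$'' into the quantitative statement that $J$ realizes every $1$-jet $d\theta(p)$ at $p$, for which one needs the bracketing-with-truncated-coordinates maneuver together with the symplectic normal form for a function with nonvanishing differential. A lesser point to verify is that Lemma 4.7—or rather its proof—goes through for immersions (differentials spanning everywhere) and not only for embeddings.
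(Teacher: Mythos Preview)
Your argument is correct and follows essentially the same route as the paper: reduce to the dichotomy (C), prove (C) by repeatedly bracketing to lower the order of vanishing until one obtains functions in the ideal whose symplectic gradients span every tangent space, and then invoke Lemma~4.6. The one tactical difference is that the paper goes one step further---it also separates points, upgrading the immersion to an embedding, so that Lemma~4.6 applies verbatim---whereas you stop at the immersion and instead unwind the proof of Lemma~4.6 via the partition-of-unity/divergence argument; both work, and your version is essentially what the Atkin--Grabowski proof of Lemma~4.6 does anyway. Two small slips: your references to ``Lemma~4.7'' in the body of the argument should read ``Lemma~4.6,'' and in the sentence showing $I_p\not\subset I_q$ you want a function \emph{supported near $p$} (hence vanishing near $q$) that equals a Darboux coordinate near $p$, not one supported near $q$.
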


Further down, if $q\in X'$, we will write $I'_q\subset C^\infty(X')$ for the ideal analogously defined.

\begin{proof}To prove Lemma 4.7 it will suffice  to show that if $I\subset C^\infty(X)$ is an ideal, then either $I\subset I_p$ for some $p\in X$ or $I\supset N$ (note that codim $N=1$).

Suppose $I\not\subset I_p$ for any $p$, so that for every $p\in X$ there is a $\xi\in I$ such that $\xi-\xi(p)$ vanishes to a finite order only.
If this order is $>1$, with a suitable $\alpha\in C^\infty(X)$ and
$$
\xi'=(\sgrad \alpha)(\xi-\xi(p))=\{\alpha,\xi\}\in I,
$$
$\xi'=\xi'-\xi'(p)$ will vanish to one order less.
Applying this repeatedly we obtain $\xi''\in I$ such that $\sgrad\xi''\neq 0$ at $p$.
Given a nonzero tangent vector $t\in T_pX$, with suitable $\beta\in C^\infty(X)$ 
$$
\eta=(\sgrad \xi'')\beta=\{\xi'',\beta\}\in I
$$
has nonzero $t$--derivative.
We can choose finitely many such $\eta_1,\ldots,\eta_k\in I$ so that the map $(\eta_1,\ldots,\eta_k)\colon X\to\bR^k$ is an immersion.
Furthermore, given distinct $p,q\in X$, with a suitable $j=1,\ldots,k$ and $\gamma\in C^\infty(X)$
$$
\zeta=(\sgrad\eta_j)\gamma=\{\eta_j,\gamma\}\in I
$$
will vanish at $p$ but not at $q$.
Adjoining finitely many such $\zeta$'s to the $\eta$'s we obtain $\zeta_1,\ldots,\zeta_m\in I$ that embed $X$ in $\bR^m$.
But then Lemma 4.6 implies $N\subset \{C^\infty (X),I\}\subset I$, as needed.
\end{proof}

\begin{proof}[Proof of Lemma 4.4]
It follows from Lemma 4.6 that various Lie theoretic notions in $(C^\infty(X),\{\ \})$ and in $(C^\infty (X'),\{ \ \}')$ can be explained in terms of the triple brackets, and so they must correspond under $\Phi$.
For example, $\zeta\in C^\infty(X)$ is in the center (i.e., is constant) if and only if $\sgrad\zeta$ annihilates $C^\infty(X)$.
But this is the same as annihilating $\{C^\infty(X), C^\infty(X)\}$.
The upshot is that $\zeta$ is in the center if and only if $\{\{\xi,\eta\},\zeta\}=0$ for all $\xi,\eta\in C^\infty(X)$.
Hence $\Phi$ maps constants to constants.
Similarly, a subspace $I\subset C^\infty (X)$ is a Lie ideal if and only if $\{\{C^\infty(X),C^\infty(X)\},I\}\subset I$.
Hence ideals, and so maximal ideals too, correspond under $\Phi$.
Our subsequent analysis was inspired by Omori's treatment of isomorphisms of Lie algebras of symplectic vector fields (who, in turn, was inspired by Pursell--Shanks and their referee, forever anonymous now), see \cite{O,PS}.

Since the infinite codimensional maximal ideals in $C^\infty(X)$, $C^\infty(X')$ correspond under $\Phi$, by Lemma 4.7 there is a bijection $\psi\colon X\to X'$ such that $\Phi(I_p)=I'_{\psi(p)}$.
Consider for $p\in X$ the subalgebra $A_p\subset C^\infty(X)$ and the ideal $B_p\subset A_p$
$$
A_p=\{\xi\colon\sgrad \xi(p)=0\},\qquad B_p=\{\xi\colon\sgrad\xi\text{ vanishes at }p\text{ to order }\ge 2\},
$$
and similarly $B'_q\subset A'_q\subset C^\infty(X')$.
It is immediate that if $\xi\not\in A_p$ then any $\zeta\in C^\infty(X)$ can be written near $p$ as $\zeta=(\sgrad\xi)\eta$ with some $\eta\in \{C^\infty(X),C^\infty(X)\}=N$.
In particular, $\{\{C^\infty(X),C^\infty(X)\},\xi\}+I_p=C^\infty(X)$.
This leads to the characterization
\begin{gather*}
A_p=\{\xi\colon \{\{C^\infty(X),C^\infty(X)\},\xi\}+I_p\neq C^\infty(X)\},\text{ and similarly}\\
B_p=\{\xi\colon \{\{C^\infty(X),C^\infty(X)\},\xi\}\subset A_p\}.
\end{gather*} 
It follows that $\Phi(A_p)=A'_{\psi(p)}$ and $\Phi(B_p)=B'_{\psi(p)}$.
Hence $\Phi$ descends to a linear isomorphism of the quotient algebras
$$
f\colon A_p/B_p=\fg\to A'_{\psi(p)}/B'_{\psi(p)}=\fg',
$$
that is compatible with the inherited triple brackets.

It is not hard to see that $\fg$ is isomorphic to the symplectic Lie algebra sp$(2n,\bR)$.
Indeed, if we introduce local coordinates $x_1,\ldots,x_{2n}$ in $X$ centered at $p$ so that $\omega=\sum_1^n dx_i\wedge dx_{i+n}$, and we associate with $\xi\in A_p$ the quadratic part of its Taylor series at $p$, this map descends to a linear isomorphism between $A_p/B_p$ and the space $Q$ of 2--homogeneous polynomials in $x_1,\ldots,x_{2n}$.
The isomorphism respects the Poisson bracket, and so $\fg\approx (Q,\{\ , \})\approx \text{sp} (2n,\bR)$.

We apply Cartan's theorem above to conclude that for $\xi,\eta\in A_p$
$$
\{ \Phi\xi,\Phi\eta\}'=\var \Phi \{ \xi,\eta\}\qquad \text{mod } B'_{\psi(p)},
$$
with $\var=\var_p=\pm 1$ independent of $\xi,\eta$.
This relation can be extended as follows.
Suppose first $\xi\in A_p$ but $\eta\in C^\infty(X)$ is arbitrary.
Since $\fg=\text{sp} (2n,\bR)$ is simple, there are  $\xi_1,\xi_2\in A_p$ such that $\xi=\{\xi_1,\xi_2\}$ mod $B_p$.
Then
$$
\{\Phi\xi,\Phi\eta\}'=\var \{\{\Phi\xi_1,\Phi\xi_2\}' , \Phi\eta\}'=\var\Phi\{\{\xi_1,\xi_2\},\eta\}=\var\Phi\{\xi,\eta\}\qquad\text{mod }A'_{\psi(p)}.
$$
If now $\xi,\eta\in C^\infty(X)$ are arbitrary, there are $\alpha\in A_p$, $\beta\in C^\infty(X)$ such that $\{\alpha,\beta\}=\xi-\xi(p)$ near $p$, which gives at $\psi(p)$ 
$$
\{\Phi\xi,\Phi\eta\}'=\var\{\{\Phi\alpha,\Phi\beta\}',\Phi\eta\}'=\var\Phi\{\{\alpha,\beta\},\eta\}=\var\Phi\{\xi,\eta\}.
$$
Writing $\psi(p)=q$, this means 
$$
\{\Phi\xi,\Phi\eta\}' (q)=\var_{\psi^{-1}(q)}(\Phi\{\xi,\eta\})(q).
$$
Since in this equation all quantities except $\var_{\psi^{-1}(q)}$ obviously depend continuously on $q$, it follows that $\var_p=1$ for all $p$ or $\var_p=-1$ for all $p$, which proves the lemma.
\end{proof}

\begin{proof}[Proof of Theorem 1.1]
By continuing with our analysis of $\Phi=F_*|T_u\cU$ in the same spirit, it would not take too much to find $\varphi$ (it would be $\psi^{-1}$) and $a,b$ of the theorem.
However, instead we can conveniently refer to a theorem of Atkin and Grabowski concerning isomorphisms of Poisson--Lie algebras of symplectic manifolds.
For this we only need $(X,\omega),(X',\omega')$ to be connected compact smooth symplectic manifolds; then any Lie algebra isomorphism $\Phi\colon C^\infty(X)\to C^\infty(X')$ determines a diffeomorphism $\varphi\colon X'\to X$ and numbers $a,b$ so that
\begin{equation}
a\varphi^*\omega=\omega'\qquad\text{ and }\qquad\Phi\xi=a\varphi^*\xi-b\int_X\xi\omega^n
\end{equation}
$(\dim_{\bR}X=\dim_{\bR} X'=2n)$.
\cite[(8.10) Theorem]{AG} is formulated differently, among other things because it covers disconnected manifolds as well; but for connected $X,X'$ it boils down to (4.5), if one takes into account that $N$ is the commutator algebra of $C^\infty(X)$ and the center consists of constants.
To identify $a,b$, let $V=\int_X\omega^n$, $V'=\int_{X'}\omega'^n$. Thus
\begin{equation*}
V'=a^n\int_{X'}\varphi^*\omega^n=\sigma a^n\int_X\omega^n=\sigma a^n V,
\end{equation*}
where $\sigma=\pm 1$ depending on whether $\varphi$ preserves orientation or not.
If $\Phi$ also preserves $L^2$ norms, then 
\begin{gather*}
V'\int_X\xi^2\omega^n=V\int_{X'} (\Phi\xi)^2 \omega'^n=V a^n\int_{X'}\Big(a\varphi^*\xi-b\int_X\xi\omega^n\Big)^2(\varphi^*\omega)^n, \qquad\text{i.e.},\\
\int_X\xi^2\omega^n=a^2\int_X \xi^2\omega^n+ b(bV-2a)\Big(\int_X\xi\omega^n\Big)^2,\qquad\xi\in C^\infty(X).
\end{gather*}
When $\int_X\xi\omega^n=0$, we obtain $a^{2}=1$, so $a=\pm 1$, whence $b=0\text{ or } 2a/V$ follows.
This takes care of the form of $\Phi=F_*|T_u\cU$, assuming it preserves Poisson brackets.
If it does not, then Lemmas 4.1, 4.4  imply that $-\Phi$ preserves brackets, and $\Phi$ again turns out to be of form claimed in the theorem.
\end{proof}

All values $a,b$ admitted in Theorem 1.1 and both signs in $\varphi^*\omega=\pm\omega'$ can occur for any $(X,\omega)$, $u\in\cH$ such that $\omega_u$ is analytic, and a suitable $(X',\omega')$.
It suffices to check this when $u=0$.
An example with $\varphi^*\omega=-\omega'$ occurs if we choose $X'$ to be $X$ with the opposite complex structure, so that holomorphic functions in $X$ turn into antiholomorphic functions on $X'$.
The form $\omega'= -\omega$ is K\"ahler on $X'$, $\cH=\cH'$, and $F=\id_{\cH}\colon\cH\to\cH'$ is the isometry sought, with $\varphi=\id_X$.

The generalized Legendre transformation is an example of a local isometry $F$ with $(X,\omega)=(X',\omega')$ and $\varphi=\text{id}_X$, for which $\varphi^*\omega=\omega'$ and $a=-1$, $b=0$, see \cite[Proposition 7.1]{BCKR}. Finally, 
to realize $\varphi^*\omega=\omega'$, $a= 1$, and  $b=0 \text{ or }2/\int_X\omega^n$, we need the Monge--Amp\`ere energy $E\colon\cH\to\bR$ (that goes under other names as well).
Its differential $E_*$ on $T_u\cH$ is given by $E_*\xi=\int_X\xi\omega_u^n$.
If we again choose $(X',\omega')=(X,\omega)$, then $F(u)=u-b E(u)$ will map $\cH$ isometrically to $\cH=\cH'$ (the corresponding $\varphi=\id_X$).

\section{The proof of Theorem 1.3}

For finite dimensional Riemannian manifolds $M,M'$ instead of $\cH,\cH'$ the uniqueness result corresponding to Theorem 1.3 is straightforward.
Since isometries $F\colon M\to M'$ send geodesics to geodesics, the exponential maps at $u\in M$ and $F(u)\in M'$
$$
\exp\colon T_u M\to M,\qquad \exp'\colon T_{F(u)} M'\to M',
$$
diffeomorphisms in neighborhoods of 0 in $T_u M$, resp.~$T_{F(u)}M'$, satisfy $F\circ \exp=\exp'\circ F_*|T_u M$.
This shows that near $u$ at least, $F$ is determined by $F_*|T_u M$.

In $\cH$ this line of reasoning fails because the exponential map is not defined in a neighborhood of $0\in T_u\cH$.
Even if one restricts attention to the space $\cK\subset\cH$ of potentials $u$ with $\omega_u$ analytic, and as in section 7 defines
the natural inductive limit topologies on $\cK$ and $T_u\cK$, there is no reason
why the exponential map should be a local diffeomorphism.
It turns out, nevertheless, that it is possible to modify the finite dimensional argument to apply in $\cH$.
Instead of geodesics we can use arbitrary very analytic curves $u\colon I\to\cH$, and replace uniqueness of geodesics by the fact that such a curve is uniquely determined if it is known how $du/d\tau$  evolves.

To prove Theorem 1.3  we need the notion of parallel transport.
In general infinite dimensional Riemannian manifolds parallel lifts of smooth curves do not necessarily exist and even if they do they may not be unique (cf.~\cite[section 6]{L2}).
Therefore parallel transport along curves cannot be defined in general.
Nevertheless, in $\cH$ parallel transport does exist, and has a simple description.
Let $I\subset\bR$ be an open interval, $0\in I$, and $u\colon I\to\cH$ a $C^1$ curve. It will be convenient to write the map as $\tau\mapsto u_\tau$.
By integrating the time dependent vector field
$$
{1\over 2}\ \text{grad}_{u_\tau} {du_\tau\over d\tau}
$$
on $X$ (gradient with respect to the metric of $\omega_{u_\tau}$), we obtain a $C^1$ family $\varphi_\tau\colon X\to X$ of diffeomorphisms.
Thus
\begin{equation}
{\partial\varphi_\tau\over\partial\tau}={1\over 2} \big(\text{grad}_{u_\tau}{du_\tau\over d\tau}\big)\circ \varphi_\tau,\qquad\varphi_0=\id_X.
\end{equation}
The parallel transport of $\xi\in T_{u_\tau}\cH$ along the curve $u$ is $\xi\circ\varphi_\tau\in T_{u_0}\cH$, see [M, p.~234, or D,S].

We define the tempo of a curve $u\in C^1 (I,\cH)$ as the continuous function $\theta\colon I\to T_{u_0}\cH\approx C^\infty (X)$ given by
\begin{equation}
\theta_\tau={du_\tau\over d\tau}\circ\varphi_\tau,
\end{equation}
the parallel transport of the velocity $du_\tau/d\tau \in T_{u_\tau}\cH$ to $T_{u_0}\cH$. 

From now on we assume $\omega$ is analytic. By Proposition 2.1 this does not restrict the generality.

\begin{lem}Suppose two very analytic curves $u,u'\colon I\to\cH$ start at the same point $u_0=u'_0$, and their tempi $\theta,\theta'\colon I\to C^\infty(X)$ agree.
Then $u=u'$.
\end{lem}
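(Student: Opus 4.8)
The plan is to exploit the ODE \eqref{eq:5.1} defining the parallel transport diffeomorphisms $\varphi_\tau$, $\varphi'_\tau$ for the two curves, together with the hypothesis that the tempi coincide, to set up an identity that forces $u_\tau=u'_\tau$ by an analytic-continuation (unique continuation) argument. First I would observe that from \eqref{eq:5.2}, knowing $\theta=\theta'$ means
$$
{du_\tau\over d\tau}\circ\varphi_\tau={du'_\tau\over d\tau}\circ\varphi'_\tau\qquad\text{for all }\tau\in I.
$$
So if I can show $\varphi_\tau=\varphi'_\tau$ for all $\tau$, then $du_\tau/d\tau$ and $du'_\tau/d\tau$ agree wherever $\varphi_\tau$ is onto (it is a diffeomorphism of $X$, hence onto), and since $u_0=u'_0$, integrating in $\tau$ gives $u=u'$. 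Thus the whole problem reduces to proving $\varphi_\tau=\varphi'_\tau$.

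To get $\varphi_\tau=\varphi'_\tau$ I would run a bootstrap/continuity argument on the set $J=\{\tau\in I:u_\sigma=u'_\sigma\text{ and }\varphi_\sigma=\varphi'_\sigma\text{ for all }\sigma\text{ between }0\text{ and }\tau\}$. This set is nonempty ($0\in J$), closed by continuity, and the point is to show it is open. Suppose $\tau_0\in J$, so $u_{\tau_0}=u'_{\tau_0}$. Near $\tau_0$ I want to compare the two curves; the natural idea is to reparametrize by $\varphi_{\tau_0}^{-1}=\varphi'^{-1}_{\tau_0}$ so that WLOG $\tau_0$ is an interior point and the curves and their frames coincide there, and then argue that $u$ and $u'$ satisfy "the same" first-order evolution. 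Concretely, set $\psi_\tau=\varphi'^{-1}_\tau\circ\varphi_\tau$; from \eqref{eq:5.1} and the displayed equality of $du_\tau/d\tau\circ\varphi_\tau$ with $du'_\tau/d\tau\circ\varphi'_\tau$ one computes an ODE for $\psi_\tau$ with $\psi_{\tau_0}=\id_X$, and the goal is to see that $\psi_\tau\equiv\id_X$ forces $u_\tau=u'_\tau$ locally. The cleanest route: from $\psi_\tau=\id$ we get $\varphi_\tau=\varphi'_\tau$, hence $du_\tau/d\tau=du'_\tau/d\tau$ on all of $X$, hence $u_\tau=u'_\tau$; so everything again collapses onto showing $\psi_\tau\equiv\id$.

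The substantive step — and the main obstacle — is therefore to show that the pair $(u_\tau,\varphi_\tau)$ is \emph{uniquely} determined by the data $(u_{\tau_0},\id_X)$ and the tempo $\theta$. This is where very analyticity enters. Regard $v,v'\in C^{\an}\big((I\setminus\{\text{endpoints}\})\times X\big)$, the functions with $v(\tau,\cdot)=u_\tau$, and similarly extend $\varphi_\tau,\varphi'_\tau$ to analytic maps of a neighborhood of $\{\tau_0\}\times X$ in $\bR\times X$ (one must check that solutions of \eqref{eq:5.1} with analytic right-hand side are jointly analytic in $(\tau,x)$ — this follows from analytic dependence of ODE solutions on parameters). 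The equations \eqref{eq:5.1}, \eqref{eq:5.2} together express $\partial\varphi_\tau/\partial\tau$ and $\partial v/\partial\tau$ in terms of quantities analytic in $(v,\varphi,\theta)$; since $\theta=\theta'$ is a fixed analytic datum and the initial conditions at $\tau_0$ agree, the analytic version of the Cauchy–Kovalevskaya / ODE-with-parameters uniqueness theorem (already invoked, e.g., in the proof of Lemma 4.3) yields $v\equiv v'$ and $\varphi\equiv\varphi'$ on a neighborhood of $\{\tau_0\}\times X$, i.e. $\tau_0$ is interior to $J$. Hence $J=I$, and in particular $u=u'$, completing the proof. The delicate points to get right are (i) that $\mathrm{grad}_{u_\tau}(du_\tau/d\tau)$ depends analytically on $(u_\tau,du_\tau/d\tau)$, so that \eqref{eq:5.1} really is an ODE with analytic coefficients, and (ii) that $du_\tau/d\tau$ can be recovered analytically from $\theta_\tau$ and $\varphi_\tau$ via $du_\tau/d\tau=\theta_\tau\circ\varphi_\tau^{-1}$, closing the system.
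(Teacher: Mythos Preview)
Your overall strategy---open/closed argument reducing to a local uniqueness statement, then using real analyticity to pin down the curve from its initial data and tempo---is exactly the paper's. The redundant detour through $\psi_\tau=\varphi'^{-1}_\tau\circ\varphi_\tau$ is harmless: as you yourself note, everything collapses to the same local uniqueness question.

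The genuine gap is in the step where you invoke ``the analytic version of the Cauchy--Kovalevskaya / ODE-with-parameters uniqueness theorem.'' The coupled system you write down is \emph{not} in Cauchy--Kovalevskaya form. Equation (5.2) does not give $\partial_\tau v(\tau,x)$; it gives $\partial_\tau v$ at the shifted point $\varphi_\tau(x)$, so that recovering $\partial_\tau v(\tau,x)$ requires composing with $\varphi_\tau^{-1}$, as your point (ii) says. But $\varphi_\tau^{-1}(x)$ is not a local differential expression in $(\varphi_\tau,v)$ at $x$; it is a nonlocal functional of $\varphi_\tau$. Likewise the right side of (5.1) involves $v$ and its spatial derivatives evaluated at $\varphi_\tau(x)$ rather than at $x$. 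So neither equation is of the pointwise form $\partial_\tau U=F(\tau,x,U,\partial_x U,\dots)$ to which Cauchy--Kovalevskaya applies, and appealing to ``ODE with parameters'' doesn't help either, since the coupling between $v$ and $\varphi$ runs through spatial derivatives of $v$.

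The paper sidesteps this by \emph{not} inverting $\varphi_\tau$. It keeps (5.2) in the implicit form $\theta_\tau(x)=\partial_\tau v(\tau,\psi(\tau,x))$ and differentiates repeatedly in $\tau$: one obtains recursions (5.3)--(5.4) in which $\partial_\tau^k v$ and $\partial_\tau^k\psi$ at $\tau=0$ are expressed in terms of lower $\tau$-derivatives and $\theta$. The key observation is that at $\tau=0$ one has $\psi(0,x)=x$, so the composition trivializes and the leading term of (5.4) is simply $\partial_\tau^k v(0,x)$; and the structure of the differentiations guarantees that the remainder $Q_k$ involves only strictly lower $\tau$-derivatives. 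Thus all $\partial_\tau^k v(0,\cdot)$ are determined inductively from $u_0$ and $\theta$, and very analyticity finishes the job. If you want to rescue your formulation, you would have to introduce $\chi_\tau=\varphi_\tau^{-1}$ as an additional unknown and check that the enlarged system for $(v,\chi)$ really is of Cauchy--Kovalevskaya type; that works, but it is extra work you have not done, and the paper's direct Taylor recursion is both shorter and more transparent.
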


\begin{proof}Since the set $\{\tau\in I\colon u_\tau=u'_\tau\}$ is closed in $I$, it will suffice to show it is open, and in fact only that $u_\tau=u'_\tau$ for $\tau$ in a neighborhood of 0; or equivalently, that knowing $u_0$ and $\theta$, equations (5.1), (5.2) uniquely determine $u_\tau$ for small $\tau$.
Our argument will be local. We choose local coordinates $x_j$ on $X$, write $\omega=i\sum \omega_{jk} dx_j\wedge d\overline x_k$, and
define $v\colon I\times X\to\bR$ and $\psi\colon I\times X\to X$ by
$$
v(\tau,\cdot)=u_\tau,\qquad\psi(\tau,\cdot)=\varphi_\tau.
$$
(5.1) and (5.2) can be rewritten as functional equations
$$
{\partial\psi(\tau,x)\over\partial\tau}=P(\psi,v)(\tau,x),\qquad
\theta_\tau(x)={\partial v\over\partial\tau} (\tau,\psi(\tau,x)),
$$
where $P(\psi,v)$ is an expression involving $\det(\omega_{jk}+v_{x_j\overline x_k})^{-1}$ and various partials of $\psi,v$ that are added, multiplied, and composed with each other.
However, no $\tau$ derivative of $\psi$ and only first $\tau$ derivative of $v$ occur.
Differentiating these equations $k-1$ times we obtain
\begin{eqnarray}
& & {\partial^k\psi(\tau,x)\over\partial\tau^k}=P_k (\psi,v) (\tau,x),\\
& & {\partial^{k-1}\theta_\tau(x)\over\partial\tau^{k-1}}={\partial^k v\over\partial\tau^k} \big(\tau,\psi(\tau,x)\big)+Q_k (\psi,v)(\tau,x).
\end{eqnarray}
Induction shows that $P_k,Q_k$ are composed of various partials of $\psi,v$; but $P_k$ contains $\tau$ derivatives of $\psi$ only of order $<k$ and of $v$ of order $\leq k$, while in $Q_k$, $\tau$ derivatives of both $\psi,v$ are of order $<k$.
Therefore (5.3), (5.4) recursively determine $\partial^kv/\partial\tau^k,\partial^k\psi/\partial\tau^k$ at $\tau=0$, starting with
\begin{equation*}
 v(0,x)=u_0(x),\quad \psi(0,x)=x,\quad
  \frac{\partial v}{\partial\tau}(0,x)=\theta_0(x),\quad
\frac{\partial\psi}{\partial\tau}(0,x)=P_1(\psi,v)(0,x),\ldots
\end{equation*}
In particular, $u_0$ and $\theta$ determine all partials $\partial^k v/\partial\tau^k$ for $\tau=0$, and $v$ being analytic, also $v(x,\tau)$ for small $\tau$, as required.
\end{proof}

\begin{proof}[Proof of Theorem 1.3]
We are considering an analytic $u_0\in\cH$, its connected open neighborhood $\cU\subset\cH$, $C^\infty$ isometries $F,G\colon\cU\to\cH'$ such that $F(u_0)=G(u_0)$ and $F_*|T_{u_0}\cU=G_*|T_{u_0}\cU$.
To show $F$ and $G$ agree it will suffice to show they agree in some neighborhood of $u_0$, for example in one that is convex in $C^\infty(X)$ and on which $F^{-1}\circ G$ is defined.
Given an analytic $u_1$ in such a neighborhood, the curve $u\colon \tau\mapsto u_\tau=(1-\tau) u_0+\tau u_1\in\cH$ is very analytic for $\tau$ in some open interval $I\supset [0,1]$. By Theorem 3.3 its image $u'=F^{-1}\circ G\circ u$ is also very analytic.
Since $u_0=u'_0$ and since parallel transport commutes with $C^2$ isometries, see \cite[ Lemma 6.5]{L2}, the tempi of $\tau\mapsto u_\tau$, $\tau\mapsto u'_\tau$ agree.
Hence $u_\tau=u'_\tau$ by Lemma 5.1, in particular $F(u_1)=G(u_1)$. Analytic $u_1$ being dense, $F=G$ in a neighborhood of $u_0$, and the theorem follows.
\end{proof}

Even with $F,G$ assumed only $C^1$ this proof would go through if Lemma 5.1 could be shown for curves $u,u'$ that are only $C^1$. 
In fact,  showing Lemma 5.1 for $C^\infty$ curves would suffice, because  $C^1$ isometries can be proved to send very analytic curves to smooth curves. 
Although \cite[Lemma 6.5]{L2} cannot be used in this generality, the criterion in \cite[Lemma 7.2]{L2} would show that parallel transport commutes even with $C^1$ isometries.

\section{The proof of Theorem 1.4}

In this section we will construct $C^\infty$ local isometries of $\cU\subset\cH$ into $\cH'$, with given differential $\Phi\colon T_u\cH\to T_{u'}\cH'$, provided $\omega_u$, $\omega_{u'}$ are analytic.
Upon replacing $\omega,\omega'$ by $\omega_u,\omega_{u'}$ we can assume $u,u'=0$.
Then $\omega,\omega'$ will be analytic.
This will free us to use $u,u'$ for other purposes.

We start by recalling the transformations constructed in \cite{L1}.
Let $X,X'$ be arbitrary complex manifolds.
We abbreviate the holomorphic cotangent bundle $T^{*(1,0)} X\to X$ as $\pi\colon T^* X\to X$, and employ $\pi'\colon T^* X'\to X'$ similarly.
There is a tautological $(1,0)$ form $A$ on $T^*X$, whose value on a tangent vector $v\in T_z^{1,0} (T^* X)$ is $A(v)=\langle z,\pi_* v\rangle$, where $z\in T^* X$ and $\langle \ , \rangle$ denotes the pairing between $(1,0)$ forms and $(1,0)$ vectors on $X$.
The canonical holomorphic symplectic $(2,0)$ form on $T^*X$ is $\Omega=-dA$.
Local coordinates $x_j$ on $X$ give rise to local coordinates $x_j,p_j$ on $T^*X$; then $A=\sum p_j dx_j$ and $\Omega=\sum dx_j\wedge dp_j$.
The corresponding forms on $T^*X'$ will be denoted $A',\Omega'$.

Any holomorphic section $g\in\cO(X,T^* X)$, i.e., a holomorphic $(1,0)$ form, determines a biholomorphism
\begin{equation}
T^*X\ni z\mapsto z+g(\pi z)\in T^* X.
\end{equation}
It is straightforward to check, for example in local coordinates, that (6.1) preserves $\Omega$ if $\partial g=0$.

Now suppose $v\in C^\infty(X)$ (real valued!), and consider $\partial v$ as a map $g_v\colon X\to T^* X$.
By  \cite[Proposition 2.2]{L1}
\begin{equation}
g_v^* A=\partial v\quad\text{ and }\qquad g_v^*\Omega=\partial\opartial v.
\end{equation}
Suppose furthermore that we are given a holomorphic symplectomorphism $\Psi$ of a neighborhood $N$ of $g_v(X)$ on an open subset of $T^*X'$.
Since $d(\Psi^* A'-A)=\Omega-\Psi^*\Omega'=0$, locally on $N$ there are holomorphic functions $h$ such that $dh=\Psi^*A'-A$.

\begin{lem}Assume that there is an $h\in\cO(N)$ such that $dh=\Psi^*A'-A$, and that $\psi=\psi_v=\pi'\circ\Psi\circ g_v\colon X\to X'$ is a diffeo\-morphism.
Then $v'\in C^\infty(X)$ defined by
\begin{equation}
v'\circ \psi=v+2\Ree h\circ g_v
\end{equation}
satisfies
\begin{equation}
\psi^*\partial\opartial v'=\partial\opartial v.
\end{equation}
\end{lem}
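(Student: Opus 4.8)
The plan is to verify \eqref{} --- rather, to verify the two displayed identities in \eqref{eq:gv} --- pull back along the composite map and check that everything collapses to $\partial\opartial v$. Concretely, set $g=g_v=\partial v\colon X\to T^*X$ and $G=\Psi\circ g_v\colon X\to T^*X'$. The key observation is that $\psi=\psi_v=\pi'\circ G$, so pulling a form back along $\psi$ and then a section of $\pi'$ over it is the same as pulling back along $G$, provided the section is compatible; more precisely I will not pull back $v'$ directly but rather $\partial\opartial v'$, using the relation \eqref{eq:gv} applied on $X'$: if we write $g_{v'}=\partial v'\colon X'\to T^*X'$, then $g_{v'}^*\Omega'=\partial\opartial v'$ on $X'$. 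The natural move is therefore to relate $g_{v'}\circ\psi$ to $G$ as sections of $\pi'$ over $\psi(X)\subset X'$.

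First I would differentiate the defining relation \eqref{eq:vprime}. Taking $\partial$ of $v'\circ\psi=v+2\Ree(h\circ g)$ and using $2\Ree = \,\cdot\, + \overline{\,\cdot\,}$ together with the fact that $h\circ g$ is the pullback of a holomorphic function (so $\opartial(h\circ g)$ involves only the antiholomorphic differential of the real map $g$, which one must track carefully), I expect to get
$$
\psi^*(\partial v') = \partial v + g^*d(h) = \partial v + g^*(\Psi^*A' - A).
$$
Here I am using $dh=\Psi^*A'-A$ and that pulling $dh$ back along the holomorphic-in-disguise map $g$, restricted to its $(1,0)$ part, reproduces $g^*dh$ correctly; this is where \cite[Proposition 2.2]{L1} and the structure of $g=\partial v$ as "$\opartial$-exact plus holomorphic" enters. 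By \eqref{eq:gv}, $g^*A=\partial v$, so the $-A$ term cancels $\partial v$ and we are left with $\psi^*(\partial v')=g^*\Psi^*A'=(\Psi\circ g)^*A'=G^*A'$.

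Next I apply $\opartial$ to $\psi^*\partial v'=G^*A'$. On the left, $\opartial\psi^*=\psi^*\opartial$ up to the usual care about $\psi$ being only smooth (not holomorphic), so $\opartial\psi^*(\partial v')=\psi^*(\opartial\partial v')=-\psi^*(\partial\opartial v')$. On the right, $\opartial G^*A' = G^*d A' = -G^*\Omega'$ because $A'$ is of type $(1,0)$ and $G$ is a map from the complex manifold $X$... wait, $G$ is only smooth. The correct statement is $d(G^*A')=G^*\Omega'$ with a sign, and splitting into types: since $\psi^*\partial v'$ is of type $(1,0)$ on $X$, its $d$ equals $\partial(\psi^*\partial v')+\opartial(\psi^*\partial v')$; the $(2,0)$ part $\partial\psi^*\partial v'$ is what one must show vanishes or be absorbed, and the $(1,1)$ part gives the claim. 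Finally, since $\Psi$ is a holomorphic symplectomorphism, $\Psi^*\Omega'=\Omega$, so $G^*\Omega'=g^*\Psi^*\Omega'=g^*\Omega=\partial\opartial v$ by the second identity in \eqref{eq:gv}. Matching the $(1,1)$ parts yields $\psi^*\partial\opartial v'=\partial\opartial v$, which is \eqref{eq:psistar}.

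The main obstacle I anticipate is bookkeeping the type decomposition when pulling back holomorphic forms along the merely-smooth real map $g_v=\partial v$: one cannot simply say "$g^*$ commutes with $\partial$" since $g$ is not holomorphic. The clean way around this is to work intrinsically on $T^*X$, where $A$ and $\Omega=-dA$ are genuinely holomorphic, perform all the exterior-algebra manipulations there (where $dh=\Psi^*A'-A$ is an honest identity of holomorphic forms and $\Psi^*\Omega'=\Omega$ is honest), and only at the end pull everything back by $g_v$, invoking \eqref{eq:gv} as the single bridge between the complex-geometric computation upstairs and the potential-theoretic statement \eqref{eq:psistar} downstairs. A secondary point to watch is the factor $2\Ree$ in \eqref{eq:vprime}: the holomorphic piece $h\circ g_v$ contributes $\partial(h\circ g_v)=g_v^*\,\partial h=g_v^*dh$ to $\partial(v'\circ\psi)$ since $h$ is holomorphic, while its conjugate contributes only to $\opartial$; checking that this asymmetry is exactly what makes the $-A$ term cancel $\partial v$ and nothing else is the crux of the short computation.
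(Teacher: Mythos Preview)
The paper does not prove this lemma here; it simply cites \cite[Theorem 2.1]{L1}. Your overall strategy---relate $g_{v'}\circ\psi$ to $G=\Psi\circ g_v$ and then pull back $\Omega'$---is the right one and is essentially how \cite{L1} proceeds. But your execution contains a concrete error that leaves a gap.

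The claim that ``the holomorphic piece $h\circ g_v$ contributes $\partial(h\circ g_v)=g_v^*\partial h=g_v^*dh$ \ldots\ while its conjugate contributes only to $\opartial$'' is false. Although $h$ is holomorphic on $N\subset T^*X$, the section $g_v=\partial v\colon X\to T^*X$ is \emph{not} holomorphic: in local coordinates its fiber components are $v_{x_j}$, which depend on $\bar x$. Hence $h\circ g_v$ is not holomorphic on $X$, $\opartial(h\circ g_v)\ne 0$, and your type--by--type computation of $\partial(v'\circ\psi)$ breaks down. The same issue undermines ``$\opartial\psi^*=\psi^*\opartial$'' later: $\psi$ is only smooth, so pullback does not respect type.

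The repair is to abandon $\partial,\opartial$ in favor of $d$, which commutes with pullback by any smooth map. Applying $d$ to the defining relation and using $g_v^*A=\partial v$ gives honestly
\[
\psi^*dv' \;=\; dv + 2\Ree\, g_v^*dh \;=\; dv + 2\Ree\bigl(G^*A'-\partial v\bigr) \;=\; 2\Ree\, G^*A'.
\]
Now comes the step your sketch omits. In coordinates $(y_j,p_j)$ on $T^*X'$ write $G(x)=(\psi(x),p(x))$, so $G^*A'=\sum p_j\,d\psi_j$, while $\psi^*dv'=\sum (v'_{y_j}\!\circ\psi)\,d\psi_j+\overline{(\cdots)}$. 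Since $\psi$ is a diffeomorphism, $\{d\psi_j,d\bar\psi_j\}$ frame the complexified cotangent space, and matching coefficients forces $p_j=v'_{y_j}\circ\psi$, i.e.\ $g_{v'}\circ\psi=G$. This is precisely the paper's (6.5). With it in hand,
\[
\psi^*\partial\opartial v' \;=\; \psi^*g_{v'}^*\Omega' \;=\; (g_{v'}\circ\psi)^*\Omega' \;=\; G^*\Omega' \;=\; g_v^*\Psi^*\Omega' \;=\; g_v^*\Omega \;=\; \partial\opartial v,
\]
using $\Psi^*\Omega'=\Omega$ and $g_v^*\Omega=\partial\opartial v$. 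So your plan can be made to work, but only after you replace the mistaken $\partial$--bookkeeping by the $d$--computation plus the coordinate argument that pins down $g_{v'}\circ\psi=G$.
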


This is \cite[Theorem 2.1]{L1}.

\begin{lem} Under the assumptions of Lemma 6.1
\begin{gather}
g_{v'}\circ \psi=\Psi\circ g_v \qquad\text{and}\\
\Psi^*\pi'^*\partial v'=A+dh\qquad\text{on}\quad T_{g_v(x)}T^*X,\quad x\in X.
\end{gather}
\end{lem}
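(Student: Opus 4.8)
The plan is to derive both identities directly from Lemma 6.1 together with the defining formula (6.3) for $v'$ and the formulas (6.2) describing how $g_v$ pulls back the tautological data. For the first identity (6.5), recall that $g_{v'}=\partial v'$, viewed as a section $X'\to T^*X'$. I would compute $g_{v'}\circ\psi$ by pulling back the identity $g_{v'}=\partial v'$ along $\psi$: since $\psi^*\partial\opartial v'=\partial\opartial v$ by (6.4), and since the section $g_{v'}\circ\psi$ is the unique holomorphic lift of $\psi$ whose associated $(1,0)$--form is $\psi^*\partial v'$, it suffices to show that $\Psi\circ g_v$ is a section of $\pi'$ over $\psi$ (which is immediate, because $\pi'\circ\Psi\circ g_v=\psi$ by the definition of $\psi$) whose pullback of $A'$ equals $\psi^*\partial v'$. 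So the heart of (6.5) is really (6.6): once we know $\Psi^*\pi'^*\partial v'=A+dh$, we can pull back along $g_v$ and use $g_v^*A=\partial v$ (from (6.2)) and $g_v^*dh=d(h\circ g_v)$ to get $(\Psi\circ g_v)^*\pi'^*\partial v'=\partial v+\partial\bigl((h+\bar h)\circ g_v\bigr)=\partial\bigl(v'\circ\psi\bigr)=\psi^*\partial v'$, where the middle step uses (6.3) and that $\bar h\circ g_v$ is anti-holomorphic so contributes nothing to $\partial$; this forces $\Psi\circ g_v$ and $g_{v'}\circ\psi$ to agree as sections over $\psi$.

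So the real content is (6.6), and I would prove it as follows. Start from $dh=\Psi^*A'-A$, i.e. $\Psi^*A'=A+dh$ on $N$. The section $g_{v'}=\partial v'\colon X'\to T^*X'$ satisfies $\pi'\circ g_{v'}=\id_{X'}$, and the tautological form $A'$ has the reproducing property that for any holomorphic $(1,0)$--form $\beta$ on $X'$, $g_\beta^*A'=\beta$ where $g_\beta\colon X'\to T^*X'$ is $\beta$ regarded as a section — this is exactly (6.2) applied on $X'$. In fact I want the stronger \emph{pointwise} statement: $\pi'^*\partial v'$ and $A'$ agree when restricted to the tangent space $T_{g_{v'}(y)}T^*X'$ of the graph of $g_{v'}$, for each $y\in X'$; this is because along the graph both forms, paired with a tangent vector $w$, return $\langle g_{v'}(y),\pi'_*w\rangle$. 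Now I would like to assert $\Psi$ maps the graph of $g_v$ onto the graph of $g_{v'}$ — but that is precisely (6.5), so I should not use it here. Instead I argue on the level of forms along the single submanifold $g_v(X)\subset N$: restricting $\Psi^*A'=A+dh$ to $T_{g_v(x)}T^*X$ gives $\Psi^*A'|_{g_v(x)}=(A+dh)|_{g_v(x)}$, and it remains to identify $\Psi^*A'|_{g_v(x)}$ with $\Psi^*\pi'^*\partial v'|_{g_v(x)}$. For that I use that $\Psi(g_v(x))$ lies over $\psi(x)$, and that $A'$ and $\pi'^*\partial v'$ restrict to the same linear functional on $T_{\Psi(g_v(x))}T^*X'$ \emph{provided} $\Psi(g_v(x))=g_{v'}(\psi(x))$, i.e. provided the point $\Psi(g_v(x))$ has $(1,0)$--cotangent coordinate equal to $\partial v'$ at $\psi(x)$; to see this last fact without circularity, note that $\Psi^*A'|_{g_v(x)}-A|_{g_v(x)}=dh|_{g_v(x)}$ is a \emph{closed} (indeed exact) form whose pullback by $g_v$ is $d(h\circ g_v)$, an honest $1$--form on $X$ whose $(1,0)$--part is $\partial(h\circ g_v)$; combining with $g_v^*A=\partial v$ and (6.3) shows $g_v^*\Psi^*A'=\psi^*\partial v'$, and since $g_v^*\Psi^*A'=(\pi'\circ\Psi\circ g_v)^*(\,\cdot\,)=\psi^*\bigl((\Psi\circ g_v\circ\psi^{-1})^*A'\bigr)$ reads off the cotangent coordinate of $\Psi\circ g_v$ as $\partial v'$, giving (6.5) and hence (6.6) simultaneously.

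I expect the main obstacle to be precisely the circular-looking interdependence of (6.5) and (6.6): each looks like it wants the other. The clean way to break the circle, which I would adopt, is to prove a single combined statement — that the holomorphic map $\Psi\circ g_v\colon X\to T^*X'$ equals the section $\partial v'$ composed with $\psi$ — by checking that both sides (a) project to $\psi$ under $\pi'$, which is the definition of $\psi$, and (b) have the same pullback of the tautological form $A'$, namely $\psi^*\partial v'$, which follows from $\Psi^*A'=A+dh$, (6.2), and (6.3) as computed above. A section into $T^*X'$ over a fixed base map is determined by its cotangent component, so these two checks yield (6.5); then (6.6) is the pointwise refinement obtained by restricting $\Psi^*A'=A+dh$ to $T_{g_v(x)}T^*X$ and using (6.5) to rewrite the left side as $\Psi^*\pi'^*\partial v'$. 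The remaining work is bookkeeping with $(1,0)$ versus $(0,1)$ parts (so that $\opartial v'$ and $\bar h$ drop out) and with the reproducing property of the tautological form, all of which is routine once the logical structure is fixed.
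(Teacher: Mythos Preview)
Your overall architecture---prove (6.5) first by identifying the two lifts of $\psi$ into $T^*X'$, then read off (6.6) by restricting $\Psi^*A'=A+dh$ and invoking the reproducing property $\pi'^*\partial v'=A'$ along the graph of $g_{v'}$---matches the paper's order exactly. The paper simply cites \cite[Proposition 2.4]{L1} for (6.5) and then gives the two--line derivation of (6.6) you describe.

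The gap is in your verification of step (b), the equality $(\Psi\circ g_v)^*A'=\psi^*\partial v'$. You compute the left side correctly as $\partial v+d(h\circ g_v)$, but the chain
\[
\partial v+d(h\circ g_v)\;=\;\partial v+\partial\bigl((h+\bar h)\circ g_v\bigr)\;=\;\partial(v'\circ\psi)\;=\;\psi^*\partial v'
\]
fails at both ends. The map $g_v\colon X\to T^*X$ is only smooth, not holomorphic (the fiber coordinate is $\partial v$, which depends anti--holomorphically on $x$ through $v_{x_j\bar x_k}$), so $\bar h\circ g_v$ is \emph{not} anti--holomorphic and $d(h\circ g_v)\neq\partial(h\circ g_v)$. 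Likewise $\psi=\pi'\circ\Psi\circ g_v$ is only a diffeomorphism, not a biholomorphism, so $\psi^*$ does not preserve bidegree and $\psi^*\partial v'\neq\partial(v'\circ\psi)$. (You also call $\Psi\circ g_v$ ``holomorphic''; it is not, for the same reason.) What (6.3) does give you, after taking $d$, is only that the \emph{real parts} of $\psi^*\partial v'$ and $\partial v+d(h\circ g_v)$ agree; combined with (6.4) you get that their difference is a closed purely imaginary $1$--form, which is not enough to conclude it vanishes.

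So your reduction of (6.5) to the identity of pullbacks of $A'$ is fine, but establishing that identity genuinely requires the argument of \cite[Proposition 2.4]{L1} (a direct coordinate computation exploiting that $\Psi$ and $h$ are holomorphic on $T^*X$), not the type--splitting shortcut you propose. Once (6.5) is in hand, your derivation of (6.6) is the same as the paper's.
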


Here and later, $T$ indicates {\sl real} tangent bundle.

\begin{proof}Like Lemma 6.1, this too is an exercise in the chain rule. (6.5) is \cite[Proposition 2.4]{L1}. As to (6.6), for any $y\in X'$ and $w\in C^\infty(X')$, the definition of $A'$ yields $\pi'^*\partial w=A'$ on $T_{g_w(y)}T^*X'$. When $y=\psi(x)$ and $w=v'$, this implies by (6.5)
\begin{equation*}
\Psi^*\pi'^*\partial v'=\Psi^*A'=A+dh\qquad\text{on}\quad T_{g_v(x)}T^*X.
\end{equation*}
\end{proof}

Consider next an interval $I\subset\bR$ and a $C^1$ curve $I\ni t\mapsto v_t\in C^\infty(X)$.

\begin{lem}Suppose that $g_{v_t} (X)\subset N$ and $\psi_t=\pi'\circ \Psi\circ g_{v_t}\colon X\to X'$ is a diffeomorphism onto some neighborhood of a fixed open $X^0\subset X'$, for all $t\in I$.
Define the transform $v'_t\in C^\infty (X^0)$ of $v_t$ by (6.3), with $v,v'$ replaced by $v_t,v'_t$ and $\psi$ by $\psi_t$.
Then $(dv'_t/dt)\circ \psi_t=dv_t/dt$ on $\psi_t^{-1} (X_0)$.
\end{lem}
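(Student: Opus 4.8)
The plan is to differentiate the defining relation (6.3) in $t$ and read off the identity. Fix $t\in I$ and $x\in X$ with $\psi_t(x)\in X^0$, and put $\zeta=\partial g_{v_t}(x)/\partial t\in T_{g_{v_t}(x)}T^*X$. Because $\pi\circ g_{v_t}=\id_X$ does not depend on $t$, the vector $\zeta$ is vertical: $\pi_*\zeta=0$. Differentiating the left side of (6.3) by the chain rule gives $(\partial v'_t/\partial t)(\psi_t(x))+(dv'_t)_{\psi_t(x)}\big(\partial\psi_t(x)/\partial t\big)$, while the right side gives $(\partial v_t/\partial t)(x)+2\Ree\big[(dh)_{g_{v_t}(x)}(\zeta)\big]$ since $h$ is holomorphic. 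Hence everything reduces to the ``cross term'' identity
$$
(dv'_t)_{\psi_t(x)}\big(\partial\psi_t(x)/\partial t\big)=2\Ree\big[(dh)_{g_{v_t}(x)}(\zeta)\big],
$$
from which $(dv'_t/dt)\circ\psi_t=dv_t/dt$ on $\psi_t^{-1}(X^0)$ is immediate.

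To prove the cross term identity I would pass to $(1,0)$--forms. For a real function $w$ and a real tangent vector $\eta$ one has $dw(\eta)=2\Ree[\partial w(\eta^{1,0})]$, so the left side is $2\Ree\big[(\partial v'_t)_{\psi_t(x)}\big((\partial\psi_t(x)/\partial t)^{1,0}\big)\big]$. Next, $\partial\psi_t(x)/\partial t=\pi'_*\Psi_*\zeta$, and since $\Psi$ and $\pi'$ are holomorphic their differentials commute with taking $(1,0)$--parts; thus $(\partial\psi_t(x)/\partial t)^{1,0}=\pi'_*\Psi_*(\zeta^{1,0})$ and therefore $(\partial v'_t)_{\psi_t(x)}\big((\partial\psi_t(x)/\partial t)^{1,0}\big)=\big(\Psi^*\pi'^*\partial v'_t\big)_{g_{v_t}(x)}(\zeta^{1,0})$. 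Now apply (6.6) of Lemma 6.2 to $v_t$ --- whose transform is $v'_t$, with the diffeomorphism $\psi_t$ --- to get $\Psi^*\pi'^*\partial v'_t=A+dh$ on $T_{g_{v_t}(x)}T^*X$, so the quantity equals $A(\zeta^{1,0})+dh(\zeta^{1,0})$. Finally $A(\zeta^{1,0})=\langle g_{v_t}(x),\pi_*\zeta^{1,0}\rangle=0$ because $\zeta$, hence $\zeta^{1,0}$, is vertical, and $dh(\zeta^{1,0})=dh(\zeta)$ because $dh=\partial h$ is of type $(1,0)$; taking $2\Ree$ gives the identity.

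Like Lemmas 6.1 and 6.2, this is fundamentally a chain-rule computation, so I do not anticipate a real difficulty. The only things that need care are the type bookkeeping --- keeping the real differentials of the real functions $v_t,v'_t$ separate from the $(1,0)$--forms $\partial v_t,\partial v'_t$, and using holomorphy of $\Psi$ and $\pi'$ to move the $(1,0)$--projection across them --- and the matter of domains: $v'_t$ is defined only on $X^0$, so (6.3) and (6.6) are available precisely at the points $g_{v_t}(x)$ with $\psi_t(x)\in X^0$, which is exactly the set on which the conclusion is claimed. Since the whole argument uses only a single $t$--derivative of $v_t$ (and only first $t$--derivatives of $\psi_t$), the hypothesis that $t\mapsto v_t$ be merely $C^1$ suffices.
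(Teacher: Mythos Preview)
Your proof is correct and follows essentially the same approach as the paper: differentiate (6.3) in $t$, invoke (6.6) to rewrite $\Psi^*\pi'^*\partial v'_t$ as $A+dh$, and kill the $A$--term using verticality of $\zeta=\partial g_{v_t}(x)/\partial t$. The paper compresses the type bookkeeping by working with $dv'_t=2\Ree\partial v'_t$ directly rather than passing to $(1,0)$--parts of vectors, but the computation is the same.
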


\begin{proof}For brevity, write $g_{v_t}=g_t$.
From $v'_t\circ \psi_t=v_t+2\Ree h\circ g_t$ and $\psi_t=\pi'\circ\Psi\circ g_t$ the chain rule gives, with $x\in\psi_t^{-1} (X^0)$.

\begin{align*}
{dv_t(x)\over dt}-{dv'_t\over dt}(\psi_t(x))&=(dv'_t)(\pi'\circ\Psi)_*\frac{dg_t(x)} {dt}-2(\Ree dh)\big(\frac {dg_t(x)}{ dt}\big)\\
&=\big(\Psi^*\pi'^*dv'_t-2\Ree dh\big)\big(\frac{dg_t(x)}{dt}\big)=2(\Ree A)\big(\frac{dg_t(x)}{dt}\big)=0,
\end{align*}
by (6.6) and because $dg_t/dt$ is vertical, $\pi_* dg_t/dt=0$.
This proves the lemma.
\end{proof}

There are other ways as well to define the transformation $v\mapsto v'$.
One is based on (6.5).
Given $\Psi$ and $v$, this determines $g_{v'}$, i.e.~$\partial v'$, uniquely, hence the real valued $v'$ up to an additive constant---assuming $X$ is connected.
Of course, in formula (6.3) there is ambiguity as well, namely $\Psi$ determines $h$ only up to a locally constant function.
One way to get around this, without mentioning $h$, is to fix a choice of $v'_0$ for some $v_0\in C^\infty(X)$; this then determines $h$ uniquely in a neighborhood of $g_{v_0}(X)$, hence it also determines $v'$ for $v$ close to $v_0$.
Alternatively, given $v_0$ and $v'_0$, we can connect $v_0$ with a nearby $v$ by a $C^1$ curve $[0,1]\ni t\mapsto v_t$, $v_1=v$, and use Lemma 6.3 to define
\begin{equation}
v'=v'_0+\int_0^1\ {dv_t\over dt}\circ \psi_t^{-1} dt,\qquad \psi_t=\pi'\circ \Psi\circ g_{v_t}.
\end{equation}
By virtue of Lemma 6.3 it does not matter which curve $t\mapsto v_t$ we choose, as long as it stays close to $v_0$.

It will be convenient to refer to $v'$ as the $\Psi$ transform of $v$, even though $\Psi$ itself determines $v'$ only up to an additive constant.

\begin{lem}If we use $\Psi'=\Psi^{-1}$ and $h'=-h\circ\Psi^{-1}$ to transform functions in $C^\infty(X')$, then the transform of $v'$ given in (6.3) is $v$ itself.
\end{lem}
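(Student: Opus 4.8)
The plan is to check directly that the transform construction of Lemmas 6.1--6.3, applied to the triple $(\Psi',h',v')$, reproduces $v$. Three things must be verified: that $h'$ satisfies the hypothesis of Lemma 6.1 relative to $\Psi'$; that the diffeomorphism this produces is $\psi^{-1}$; and that the resulting transform of $v'$ equals $v$. Throughout we restrict attention to the open subset of $X'$ on which $\psi=\pi'\circ\Psi\circ g_v$ is a diffeomorphism onto its image, noting that $\Psi(N)\subset T^*X'$ is a neighborhood of $\Psi(g_v(X))=g_{v'}(\psi(X))$ by (6.5), and that $\Psi'=\Psi^{-1}$ is a holomorphic symplectomorphism there.

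First I would record the bracket identity for $h'$. Since $h'=-(\Psi^{-1})^*h$ and pullback commutes with $d$,
\[
dh'=-(\Psi^{-1})^*dh=-(\Psi^{-1})^*(\Psi^*A'-A)=-A'+(\Psi^{-1})^*A=\Psi'^*A-A',
\]
using that $(\Psi^{-1})^*\Psi^*$ is the identity on forms; thus $h'$ is to $\Psi'$ what $h$ was to $\Psi$, with the roles of $A$ and $A'$ interchanged. Next, applying $\Psi^{-1}$ to the relation $g_{v'}\circ\psi=\Psi\circ g_v$ of (6.5) gives $\Psi'\circ g_{v'}\circ\psi=g_v$, and composing with $\pi$ yields $\psi'\circ\psi=\pi\circ g_v=\id$, where $\psi'=\pi\circ\Psi'\circ g_{v'}$. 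Hence $\psi'=\psi^{-1}$ on the image of $\psi$; in particular $\psi'$ is a diffeomorphism onto its image, so the hypotheses of Lemma 6.1 hold for $(\Psi',h',v')$, and the transform $v''$ of $v'$ is defined by $v''\circ\psi'=v'+2\Ree h'\circ g_{v'}$.

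Finally I would compute $v''$ by precomposing with $\psi$ and using $\psi'\circ\psi=\id$:
\[
v''=v'\circ\psi+2\Ree\bigl(h'\circ g_{v'}\circ\psi\bigr).
\]
By the definition (6.3) of $v'$, $v'\circ\psi=v+2\Ree h\circ g_v$; by (6.5), $g_{v'}\circ\psi=\Psi\circ g_v$, so $h'\circ g_{v'}\circ\psi=(-h\circ\Psi^{-1})\circ\Psi\circ g_v=-h\circ g_v$. Substituting,
\[
v''=v+2\Ree h\circ g_v-2\Ree h\circ g_v=v,
\]
which is the assertion.

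I do not expect a genuine obstacle here: the lemma is pure bookkeeping with pullbacks and the chain rule, exactly in the spirit of Lemmas 6.1--6.3. The only point requiring a little care is the matching of domains of definition --- choosing the neighborhoods so that $\Psi^{-1}$, $h'$, and $\psi'$ are all defined where they are used --- which is handled once one restricts to the open set where $\psi$ is a diffeomorphism onto its image.
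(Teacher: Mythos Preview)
Your proof is correct and follows essentially the same route as the paper's: both use (6.5) to identify $\psi_{v'}=\psi^{-1}$ and then verify that the defining relation (6.3) for the reverse transform reduces to $v''=v$. You are slightly more explicit than the paper in checking that $dh'=\Psi'^*A-A'$, but otherwise the arguments coincide.
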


\begin{proof}The relation (6.5) is the same as saying that $\Psi(g_v(X))=g_{v'}(X')$, or $g_v(X)=\Psi^{-1} (g_{v'}(X'))$, which in turn is the same as $g_v\circ\psi_{v'}=\Psi^{-1}\circ g_{v'}$.
Comparison with (6.5) shows $\psi_{v'}=\psi_v^{-1}$.
Hence (6.3), with $\psi=\psi_v$, is indeed equivalent to
$$
v\circ \psi_{v'}=v'+2\Ree h'\circ g_{v'}.
$$
\end{proof}

The transformation $v\mapsto v'$ can be modified to transform relative potentials on K\"ahler manifolds $(X,\omega)$.
One possibility would be to cover $X$ with open sets $U$ on which $\omega=i\partial\opartial f_U$ with suitable $f_U\in C^\infty(U)$, transform $u\in\cH$ by the recipe above applied to $v=u+f_U$, and make sure that the transformed potentials agree on overlaps $U\cap V$.
There is a better approach, though, that first appeared in \cite{Se}: the K\"ahler form $\omega$ determines a complex manifold structure on $T^*X$, different from its standard structure.
The new complex manifold comes with a canonical symplectic form like $T^*X$ does, and symplectic biholomorphisms of the deformed cotangent bundles induce transformations of relative potentials, that turn out to be local isometries $\cH\to\cH'$.

It is not surprising that a K\"ahler form, indeed any closed $(1,1)$--form $\omega$ on $X$, can be encoded in a complex structure on $T^*X$.
A $(1,1)$--form $\omega$ on $X$, viewed as a $T^*X$ valued $(0,1)$--form, gives rise to a $(0,1)$--form $\pi^*\omega$ on $T^*X$, valued in $\pi^* T^*X$.
Since $T_x^*X$ can be canonically identified with $T_z(T_x^*X)$ or $T_z^{1,0}(T_x^* X)\subset T^{1,0} (T^*X)$ (here $\pi z=x$), our $\omega$ induces a $(0,1)$ form $\theta$ on $T^*X$, valued in $T^{1,0}(T^*X)$.
Such a form is a deformation tensor, and determines an almost complex structure on $T^*X$, that will be an honest complex structure if $\theta$ satisfies an integrability condition, see \cite{Kd}.
For our $\theta$ integrability follows from $d\omega=0$.
Patyi explains all this in rather greater generality in \cite{P}.
We shall, however, not take this infinitesimal approach, but instead obtain the deformation of $T^*X$ following Semmes' local construction.
A variant appears in \cite{Do2}, where Donaldson associates with $(X,\omega)$ a biholomorphism class of complex manifolds.

So, suppose on our complex manifold $X$ we are given a smooth real $(1,1)$--form $\omega$ with $d\omega=0$.
Assume first that $X$ is simply connected and $\omega=i\partial\opartial f$ with $f\in C^\infty(X)$.
The potential $f$  induces a diffeomorphism
$$
z\mapsto z+(\partial f)(\pi z)
$$
of $T^*X$, and we define a complex manifold $X(f)$ by pulling back the complex structure of $T^*X$ along this diffeomorphism.
This makes the map
\begin{equation}
X(f)\ni z\mapsto z+(\partial f)(\pi z)\in T^*X
\end{equation}
biholomorphic.
The underlying smooth manifolds of $X(f)$ and $T^*X$ agree, and the projection $\pi\colon X(f)\to X$ is holomorphic.
Although the fibers inherit a complex vector space structure from $T^*X$, fiberwise addition is not holomorphic, and $X(f)\to X$ is not a holomorphic vector bundle in general.
Rather, it is an affine bundle for $T^*X$.
This means that there is a holomorphic fiber map $a\colon T^* X\times_X X(f)\to X(f)$ such that with $z\in X(f)$, $x=\pi z$, and $z_1,z_2\in T_x^*X$, 
$$
a(z_1+z_2,z)=a(z_1,a(z_2,z)),\quad\text{and\quad $a(\cdot,z)\colon T^*_xX\to\pi^{-1}x$ is bijective.}
$$
Indeed, $a(w,z)=w+z$ will do.

If $f_1$ is another potential of $\omega$, then $f_1=f+2\Ree F$ with $F\in \cO(X,T^*X)$.
Since $w\mapsto w+(\partial F)(\pi w)$ is a biholomorphism of $T^*X$, the complex structure of $X(f_1)$, pulled back from $T^*X$ along the map $z\mapsto z+(\partial f)(\pi z)+(\partial F)(\pi z)$, agrees with $X(f)$.
In other words, the complex manifold $X(f)$ depends only on $\omega$ and not on the choice of its potential $f$; henceforward we will denote it $X(\omega)$.

The pull back $\Omega(\omega)$ of $\Omega$ along the map (6.8) does not depend on the choice of $f$, either, because the maps (6.1) preserve $\Omega$.
There is no canonical way to pull back the $(1,0)$ form $A$ to $X(\omega)$, though.

It should be clear that even if $\omega$ has no global potential $f$, the holomorphic symplectic manifold structure $(X(\omega),\Omega(\omega))$ can still be defined on the smooth manifold $T^*X$, by requiring that the maps $X(\omega)\ni z\mapsto z+(\partial f)(\pi z)\in T^*X$ should be holomorphic over open subsets $U\subset X$ where $\omega$ has a potential $f$.
We emphasize that points of $X(\omega)$ are still $(1,0)$ forms on $X$; so for example any $u\in C^\infty(X)$ determines a section $g_u$ of $\pi\colon X(\omega)\to X$, with $g_u(x)=\partial u(x)$.
Over an open set where $\omega=i\partial\opartial f$, using (6.2) one computes 
\begin{equation}
g_u^* \Omega(\omega)=g_{u+f}^* \Omega=\partial\opartial (u+f)=-i\omega+\partial\opartial u=-i \omega_u.
\end{equation}

\begin{lem}If $\omega_u$ is nondegenerate, then $g_u(X)\subset X(\omega)$ is totally real.
\end{lem}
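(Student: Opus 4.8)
The plan is to recognize total reality of $g_u(X)$ from the holomorphic symplectic form $\Omega(\omega)$, via the identity $g_u^*\Omega(\omega)=-i\omega_u$ of (6.10).

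Fix $x\in X$ and put $p=g_u(x)\in X(\omega)$. As $g_u$ is a smooth section of $\pi\colon X(\omega)\to X$, it is an embedding, so $(g_u)_*\colon T_xX\to T_pX(\omega)$ is injective and $T_pg_u(X)=(g_u)_*(T_xX)$. Let $\rho\colon T_xX\to T^{1,0}_pX(\omega)$ be $(g_u)_*$ followed by the projection $v\mapsto v^{1,0}=\frac12(v-iJv)$ ($J$ the complex structure of $X(\omega)$), and let $\rho_{\bC}\colon T_xX\otimes_{\bR}\bC\to T^{1,0}_pX(\omega)$ be its complex linear extension. Since $g_u(X)$ has real dimension $2n=\dim_{\bC}X(\omega)$, a standard bit of linear algebra shows it is totally real at $p$ exactly when $\rho_{\bC}$ is an isomorphism: total reality, $T_pg_u(X)\cap J\,T_pg_u(X)=0$, is equivalent to $T_pg_u(X)$ spanning $T_pX(\omega)$ over $\bC$, and after complexifying this says precisely that $\rho_{\bC}$ is onto, hence (both spaces having complex dimension $2n$) an isomorphism. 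So it suffices to prove that $\rho_{\bC}$ is injective.

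Now $\Omega(\omega)$ is of type $(2,0)$ --- being locally the pullback of the canonical $(2,0)$ form $\Omega$ on $T^*X$ along the biholomorphisms $z\mapsto z+(\partial f)(\pi z)$ --- so it annihilates any pair of tangent vectors one of which is of type $(0,1)$. Hence for real $\xi,\eta\in T_xX$, writing $(g_u)_*\xi=\rho\xi+\overline{\rho\xi}$ and likewise for $\eta$ and dropping the vanishing terms, $(g_u^*\Omega(\omega))(\xi,\eta)=\Omega(\omega)(\rho\xi,\rho\eta)$; extending $\bC$-bilinearly, $g_u^*\Omega(\omega)=\rho_{\bC}^*\Omega(\omega)$ as bilinear forms on $T_xX\otimes\bC$. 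By (6.10) this form equals $-i\omega_u$, which is nondegenerate because $\omega_u$ is; so $\rho_{\bC}$ must be injective --- if $\rho_{\bC}\xi=0$ then $(g_u^*\Omega(\omega))(\xi,\cdot)=\Omega(\omega)(\rho_{\bC}\xi,\rho_{\bC}\cdot)=0$, forcing $\xi=0$. Therefore $\rho_{\bC}$ is an isomorphism and $g_u(X)$ is totally real at $p$; as $x$ was arbitrary, the lemma follows.

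The computation in the last paragraph is purely formal; the one place that calls for care is the linear algebra of the second paragraph --- pinning down the correct characterization of maximal totally real subspaces through the $(1,0)$-projection, and keeping the real versus complex bilinear extensions straight --- but this presents no genuine obstacle.
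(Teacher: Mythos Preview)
Your proof is correct and follows essentially the same idea as the paper's: both exploit that $\Omega(\omega)$ is of type $(2,0)$ together with the identity $g_u^*\Omega(\omega)=-i\omega_u$ (this is (6.9), not (6.10)) to force total reality from nondegeneracy. The paper argues the contrapositive---a complex tangent direction would make the restricted form degenerate by a dimension count---whereas you argue directly via injectivity of the complexified $(1,0)$-projection; these are two packagings of the same linear-algebra fact.
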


Denoting by $J$ the complex structure tensor of $X(\omega)$, that a submanifold $Y\subset X(\omega)$ is totally real means that $TY\cap JTY$ consists of the zero section or, 
equivalently, $(\bC\otimes TY)\cap T^{0,1}X(\omega)\subset\bC\otimes TX(\omega)|Y$ is the zero section.

\begin{proof}Suppose $x\in X$ and $\tau\in\bC\otimes T_x X$ is not zero. 
By  (6.9) $\iota_{g_{u*}\tau}\Omega(\omega)=-\sqrt{-1}\iota_\tau\omega_u\neq 0$.
Since $\Omega(\omega)$ is a $(2,0)$ form, this proves $g_{u*}\tau\notin  T^{0,1}X(\omega)$, and so $g_u(X)$ is indeed totally real.
\end{proof}

Now consider a pair $(X,\omega),\,(X',\omega')$  of compact K\"ahler manifolds.
We will use holomorphic symplectic transformations $X(\omega)\to X'(\omega')$ to produce isometries of K\"ahler potentials, as follows.

Let $N\subset X(\omega)$ be an open neighborhood of the zero section $g_0(X)$, and $\Theta\colon N\to X'(\omega')$ a symplectic biholomorphism on an open subset of $X'(\omega')$, such that $\Theta(g_0(X))$ is the zero section of $X'(\omega')$.
Let furthermore
$$
\cN=\{u\in C^\infty(X)\colon g_u(X)\subset N\}.
$$
For $u\in\cN$ close to 0 in the $C^2$ topology, the map $\theta_u=\pi'\circ \Theta\circ g_u\colon X\to X'$ is a diffeomorphism, a small $C^1$ perturbation of $\theta_0=\pi'\circ \Theta\circ g_0$.
We define the transform $u'$ of $u$ by connecting $u_0=0$ and $u_1=u$ with a $C^\infty$ curve $[0,1]\ni t\mapsto u_t\in\cN$, and in analogy with (6.7), letting 
\begin{equation}
u'=\int_0^1\ {du_t\over dt}\circ \theta_{u_t}^{-1} dt,\qquad \theta_{u_t}=\pi'\circ \Theta\circ g_{u_t}.
\end{equation}

\begin{thm}
The integral in (6.10) does not depend on the curve $t\mapsto u_t$ connecting 0 and $u$, as long as it stays close to $0\in\cN$ (close in the $C^2$ topology).
We have
\begin{equation}
\theta_u^* \omega_{u'}=\omega_u.
\end{equation}
Define a map $F$ in a $C^2$ neighborhood of $0\in\cN$ by $F(u)=u'\in C^\infty(X')$.
This is a $C^\infty$ diffeomorphism onto a $C^2$ neighborhood of $0\in C^\infty(X')$, and its differential acts between $T_u\cN\approx C^\infty(X)$ and $T_{F(u)} C^\infty (X')\approx C^\infty(X')$ by $F_*\xi=\xi\circ\theta_u^{-1}$.
\end{thm}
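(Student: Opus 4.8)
The plan is to reduce the whole statement to the local transformation theory of Lemmas 6.1, 6.3, and 6.5 by working on coordinate patches where $\omega,\omega'$ have potentials. First I would cover $X$ by open sets $U$ carrying $f_U\in C^\infty(U)$ with $i\partial\opartial f_U=\omega|U$; over such a $U$ the biholomorphism $X(\omega)|U\ni z\mapsto z+(\partial f_U)(\pi z)\in T^*U$ carries $\Omega(\omega)$ to the canonical form $\Omega$ of $T^*U$ and the section $g_u$ to $g_{u+f_U}=\partial(u+f_U)$. After shrinking $U$ and choosing an open $W'\subset X'$ with potential $f_{W'}$ so that $\theta_{u_t}(U)\subset W'$ for all $t$ and all $u_t$ in the fixed $C^2$--small set (possible since each $\theta_{u_t}$ is a small $C^1$ perturbation of $\theta_0$), the map $\Theta$ becomes in these cotangent coordinates a holomorphic symplectomorphism $\Psi$ from a neighborhood of $g_{f_U}(U)$ into $T^*W'$, with $\pi'\circ\Psi\circ g_{u+f_U}=\theta_u|U$. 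I would then pick $h=h_U$ with $dh=\Psi^*A'-A$; since $\Theta$ sends the zero section of $X(\omega)$ to that of $X'(\omega')$, formula (6.5) forces $\partial(\Psi\text{-transform of }f_U)=\partial f_{W'}$, so the constant in $h_U$ may be normalized so that the $\Psi$--transform of $v_0:=f_U$ equals $f_{W'}$ exactly.

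Next I would prove path--independence of (6.10). Fix $u$ in the $C^2$--small domain and any $C^\infty$ curve $t\mapsto u_t$ with $u_0=0$, $u_1=u$, staying $C^2$--close to $0$; on a patch put $v_t=u_t+f_U$ and let $v'_t$ be its $\Psi$--transform with the normalized $h_U$, so $v'_0=f_{W'}$. Lemma 6.3 gives $(dv'_t/dt)\circ\theta_{u_t}=du_t/dt$ on $U$, hence $dv'_t/dt=(du_t/dt)\circ\theta_{u_t}^{-1}$ on $\theta_{u_t}(U)$; integrating over $t$ on the open set $W''=\{y'\in X'\colon\theta_{u_t}^{-1}(y')\in U\text{ for all }t\in[0,1]\}$ (open because $[0,1]$ is compact) shows $\int_0^1(du_t/dt)\circ\theta_{u_t}^{-1}\,dt=v'_1-f_{W'}$ there, which depends only on $v_1=u+f_U$, i.e.\ only on $u$. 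Enlarging $U$ around a given $x_0$ to contain the curve $t\mapsto\theta_{tu}^{-1}(\theta_u(x_0))$ and using the curve $u_t=tu$, every point $\theta_u(x_0)$ is caught by such a $W''$, so (6.10) defines $u'=F(u)\in C^\infty(X')$ unambiguously, independent of the connecting curve. Additivity of the integral over concatenations then gives $F(u_\tau)-F(u_{\tau_0})=\int_{\tau_0}^\tau(du_s/ds)\circ\theta_{u_s}^{-1}\,ds$ for any $C^1$ curve in the domain, so
$$
\frac{d}{d\tau}F(u_\tau)=\Big(\frac{du_\tau}{d\tau}\Big)\circ\theta_{u_\tau}^{-1};
$$
taking $u_\tau=u+\tau\xi$ at $\tau=0$ yields $F_*\xi=\xi\circ\theta_u^{-1}$.

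For (6.11) I would argue near an arbitrary $x_0\in X$: there $u'+f_{W'}=v'_1$ equals the $\Psi$--transform of $v_1=u+f_U$ on a neighborhood of $\theta_u(x_0)$, so (6.4) gives $\theta_u^*\partial\opartial v'_1=\partial\opartial v_1$ on $U$, and since $\partial\opartial(u+f_U)=\partial\opartial u-i\omega=-i\omega_u$ and likewise $\partial\opartial(u'+f_{W'})=-i\omega_{u'}$, this is $\theta_u^*\omega_{u'}=\omega_u$ near $x_0$; hence (6.11) holds globally, and in particular $\omega_{u'}=(\theta_u^{-1})^*\omega_u>0$, so $F$ maps into $\cH'$. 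Smoothness I would read off from $F(u)=\int_0^1 u\circ\theta_{tu}^{-1}\,dt$ (curve $u_t=tu$): $u\mapsto g_{tu}$ is affine, $\Theta,\pi'$ are fixed smooth maps, composition and inversion of diffeomorphisms and integration in $t$ are smooth operations, so $F$ is a $C^\infty$ map of the relevant $C^2$--open subsets of $C^\infty(X)$, $C^\infty(X')$ (leaning here on the infinite--dimensional calculus collected in \cite{L2}). For invertibility I would run the entire construction with $(X',\omega',\Theta^{-1})$ in place of $(X,\omega,\Theta)$ — legitimate because $\Theta(N)$ is a neighborhood of the zero section of $X'(\omega')$ — to get a $C^\infty$ map $G$ on a $C^2$--neighborhood of $0\in C^\infty(X')$, and Lemma 6.5, applied patchwise, gives $G\circ F=\id$ and $F\circ G=\id$; so $F$ is a $C^\infty$ diffeomorphism onto a $C^2$--neighborhood of $0$, with $F^{-1}=G$.

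The main obstacle will be the local--to--global bookkeeping in the first two paragraphs. Each local $\Psi$--transform is defined only up to an additive constant, and one has to arrange these constants (through the normalizations of the $h_U$) so that the single integral formula (6.10) is at once well defined, path--independent, and locally equal to a normalized local transform near \emph{every} point of $X'$ — including the points $\theta_u(x_0)$, which for $u\ne0$ need not sit over any fixed coordinate patch. Once this patching is in place, (6.11), the formula for $F_*$, smoothness, and invertibility drop out formally from Lemmas 6.1, 6.3, 6.5.
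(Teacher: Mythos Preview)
Your proposal is correct and follows essentially the same route as the paper: reduce to the cotangent-bundle picture over coordinate patches $Y,Y'$ with potentials $f,f'$, identify $\theta_u$ with $\psi_{u+f}$, normalize $h$ so that the $\Psi$--transform of $f$ is $f'$, and then read off path--independence, (6.11), the formula for $F_*$, smoothness (via $u_t=tu$), and invertibility (via $\Theta^{-1}$) from the earlier lemmas. One slip: the inverse argument should cite Lemma~6.4, not Lemma~6.5; and your derivation of (6.11) via (6.4) is fine, though the paper instead proves $g_{u'}\circ\theta_u=\Theta\circ g_u$ and applies (6.9) twice.
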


\begin{proof}Choose $Y\subset X$, $Y'\subset X'$ open so that $\omega|Y=i\partial\opartial f$, $\omega'|Y'=i\partial\opartial f'$ with some $f\in C^\infty(Y)$, $f'\in C^\infty(Y')$; make sure that $Y'$ contains the closure of $\bigcup_{0\leq t\leq 1}\theta_{u_t} (Y)$.
The biholomorphisms $\rho\colon Y(\omega)\to T^* Y$, $\rho'\colon Y'(\omega')\to T^* Y'$,
$$
\rho(z)=z+(\partial f)(\pi z),\qquad \rho'(z')=z'+(\partial f')(\pi' z'),
$$
satisfy $\rho^*\Omega=\Omega(\omega)$, $\rho'^*\Omega'=\Omega'(\omega')$.
Hence $\Psi=\rho'\circ \Theta\circ \rho^{-1}$ is a holomorphic symplectomorphism between open subsets of $(T^*Y,\Omega)$, $(T^*Y',\Omega')$, and can be used to transform $v\in C^\infty(Y)$ as described earlier in this section.
Note that 
\begin{equation}
\psi_{u+f}=\pi'\circ \Psi\circ g_{u+f}=\pi'\circ\Theta\circ g_u=\theta_u
\end{equation}
for $u\in C^\infty(X)$ close to 0.
Since $\Theta$ maps the zero section to the zero section, $g_{f'}\circ\psi_f=\Psi\circ g_f$.
This means that with a unique choice of $h$ satisfying $\Psi^* A'=A+dh$, the $\Psi$--transform of $f$ is $f'$.
Let $v'$ denote the $\Psi$--transform of $v$ close to $f$ (using the same $h$).
Writing $v=u+f$ and $v_t=u_t+f$, by (6.7)
$$
u'=\int_0^1\ {du_t\over dt}\circ \theta_{u_t}^{-1} dt=\int_0^1\ {dv_t\over dt}\circ \psi_{v_t}^{-1}dt=v'-f',
$$
indeed independent of the choice of the path $t\mapsto u_t$.

Next,
\begin{equation}
g_{u'}\circ\theta_u=\Theta\circ g_u\qquad\text{on } Y,
\end{equation}
because  by (6.5) and (6.12)
\begin{equation*}
\rho'\circ g_{u'}\circ\theta_u= g_{v'}\circ \theta_u=g_{v'}\circ \psi_v=\Psi\circ g_v=\rho'\circ\Theta\circ g_u.
\end{equation*}
Hence (6.9), applied twice gives (6.11) over $Y$:
$$
\theta_u^*\omega_{u'}=i\theta_u^* g_{u'}^*\Omega'(\omega')=i g_u^* \Theta^* \Omega'(\omega')=i g_u^*\Omega(\omega)=\omega_u.
$$
But since $X$ can be covered with open $Y$ as above, and the corresponding $Y'$ will cover $X'$, it follows that $u'$ over all of $X'$ is independent of the choice of $u_t$ and (6.11) holds on all of $X$.

That $u'=F(u)$ depends smoothly on $u$ follows from (6.10) if we choose $u_t=tu$.
The inverse of $F$ can be constructed as $F$, except that $\Theta$ has to be replaced by $\Theta^{-1}$; this follows from Lemma 6.4.
Hence $F^{-1}$ is also $C^\infty$.
Finally, to compute the action of $F_*$ on some $\xi\in T_u\cN\approx C^\infty(X)$, we choose the curve $t\mapsto u_t$ in (6.10) so that $du_t/dt=\xi$ when $t=1$.
Then
$$
F_*\xi=\frac d{dt}\Big|_{t=1} F(u_t)={d\over dt}\Big|_{t=1}\int_0^t\ {du_\tau\over d\tau}\circ \theta_{u_\tau}^{-1} d\tau=\xi\circ\theta_u^{-1},
$$
as claimed.
\end{proof}

\begin{proof}[ Proof of Theorem 1.4]
For brevity, write $\{\ , \ \}$ and $\{\ , \ \}'$ for the Poisson brackets $\{\ , \ \}_u$, $\{\ ,\ \}_{u'}$ on $C^\infty(X)$, $C^\infty(X')$.
Upon replacing the K\"ahler form $\omega$ by $\omega_u$ and $\omega'$ by $\omega_{u'}$ we reduce ourselves to the case when $\omega,\omega'$ are analytic and $u,u'=0$.
So we are given a linear isomorphism $\Phi\colon T_0\cH\approx C^\infty(X)\to T_0\cH'\approx C^\infty(X')$ such that 
\begin{gather}
 \{\Phi\xi,\Phi\eta\}'=\Phi\{\xi,\eta\}\qquad\text{or}\qquad\{\Phi\xi,\Phi\eta\}'=- \Phi \{\xi,\eta\},\text{ and}\\
 \int_{X'} (\Phi\xi)^2 {\omega'}^{n}\Big /\int_{X'} {\omega'}^{n} =\int_X\xi^2\omega^n\Big/\int_X\omega^n.
\end{gather}
\cite[(8.10) Theorem]{AG} of Atkin and Grabowski, applied to $\Phi$ or $-\Phi$ gives that $X$ and $X'$ are diffeomorphic, in particular have the same dimension; this in the $n$ in (6.15).

Further reductions are possible.
The Atkin--Grabowski theorem, as discussed in the proof of Theorem 1.1, in addition to a diffeomorphism $\varphi\colon X'\to X$  provides numbers $a,b$ such that
$$
\pm\varphi^*\omega=\omega'\qquad \text{and}\qquad\Phi\xi=a\varphi^*\xi-b\int_X\xi\omega^n.
$$
The isometry condition (6.15) then implies $a=\pm 1$ and $b=0\text{ or }2a/\int_X\omega^n$, as we saw in the proof of Theorem 1.1.
In the paragraph following that proof we pointed out that local isometries can produce any admissible $a,b$ and sign in $\pm \varphi^*\omega=\omega'$.
By composing our $\Phi$ with the differentials of these local isometries we reduce our considerations to the case when $\varphi^*\omega=\omega'$ and $\Phi\xi=\varphi^*\xi$.
Since $\Phi$ maps analytic functions to analytic functions, $\varphi$ is analytic.

Using notation introduced above, the zero section $g_0(X)\subset X(\omega)$ is totally real, as $\omega$ is nondegenerate; see Lemma 6.5.
It is also an analytic submanifold of $X(\omega)$, because in the construction of $X(\omega)$ the local potentials of $\omega$ themselves are analytic.
If we write $0'$ for the zero function in $C^\infty(X')$, then the same holds for the zero section $g_{0'}(X')\subset X'(\omega')$.
It follows that the analytic diffeomorphism
$$
g_{0'}\circ\varphi^{-1}\circ g_0^{-1}\colon g_0(X)\to g_{0'} (X')
$$
extends to a biholomorphic map $\Theta$ between a connected neighborhood $N$ of $g_0(X_0)\subset X(\omega)$ and a neighborhood of $g_{0'} (X')\subset X'(\omega')$.
In fact, $\Theta$ is symplectic between $\Omega (\omega)$ and $\Omega'(\omega')$.
This can be seen by first noting that $(\varphi^{-1})^* g_{0'}^*\Omega'(\omega')=(\varphi^{-1})^*\omega'=\omega$, which implies that for $z\in g_0(X)$ and  $\alpha,\beta\in T_z g_0(X)$
\begin{equation}
(\Theta^* \Omega' (\omega')) (\alpha,\beta)=\Omega (\omega) (\alpha,\beta).
\end{equation}
But, with $J\colon TX(\omega)\to TX(\omega)$ denoting the complex structure tensor, $\Omega(\omega) (J\alpha,\beta)=i\Omega(\omega)(\alpha,\beta)$, and similarly for $\Theta^*\Omega'(\omega')$.
It follows that (6.16) holds for all $\alpha,\beta\in T_z X(\omega)$, $z\in g_0(X)$; whence the holomorphy of $\Omega(\omega)$, $\Omega'(\omega')$, and $\Theta$ implies $\Theta^*\Omega'(\omega')=\Omega(\omega)$ on all of $N$.

Thus we can apply the transformation $u\mapsto u'$ examined in Theorem 6.6, to obtain a $C^\infty$ diffeomorphism $F$ of a $C^2$ neighborhood $\cU$ of $0\in\cH$ on a neighborhood of $0'\in\cH'$.
By Theorem 6.6, if $u\in\cU$ and $\xi\in T_u\cU$
$$
|F_*\xi|^2_{F(u)}=\int_{X'} (\xi\circ\theta_u^{-1})^2 \omega_{F(u)}^{n}=\int_X\xi^2\omega_u^n=|\xi|_u^2,
$$
so $F$ is an isometry.
Finally, the definition of $\Theta$ implies $g_{0'}\circ\varphi^{-1}=\Theta\circ g_0$.
Comparing this with (6.13) gives $\varphi^{-1}=\theta_0$.
Therefore, again by Theorem 6.6, $F_*|T_0\cU=\Phi$ follows.
\end{proof}

\section{Spaces of analytic potentials}

In this section we will discuss what happens with Theorems 1.1 through 1.4 if we choose $\omega,\omega'$ analytic, replace $\cH,\cH'$ by spaces of analytic potentials
$$
\cK=\cH\cap C^\text{an}(X),\qquad \cK'=\cH'\cap C^\text{an}(X') ,
$$
and study local isometries between $\cK$ and $\cK'$.
Although ultimately it will not matter much, still we owe an explanation in what topology to address these local questions.
The topology on $\cK$, say, is inherited from the natural locally convex direct limit topology on $C^{\an}(X)$, and this latter is defined as follows.
We forget the complex structure of $X$, and regard it as a compact real analytic manifold of dimension $m$.
As such, it can be embedded as a totally real, analytic submanifold of an $m$ dimensional complex manifold $X^{\bC}$.
Let $U_j\subset X^{\bC}$ for $j\in\bN$ form a fundamental system of neighborhoods of $X\subset X^{\bC}$.
Any $u\in C^{\an}(X)$ is the restriction of a holomorphic function on some $U_j$, and this holomorphic function can even be taken bounded.
On the space $H^\infty(U_j)$ of bounded holomorphic functions consider the norm $p_j(h)=\sup_{U_j}|h|$, and for an arbitrary sequence $a=(a_j)$ of positive numbers define the norm $p_a$ on $C^{\an}(X)$ by
$$
p_a(u)=\inf \bigl\lbrace\sum^l_1 a_j p_j (h_j)\colon l\in\bN, h_j\in H^\infty(U_j),\text{ and }\sum_1^l h_j|X=u\bigr\rbrace.
$$
The norms $p_a$ for all sequences $a$ define a locally convex topology on $C^{\an}(X)$ and on $\cK$ as well.
Cauchy estimates imply that this topology is finer than the topologies inherited from the $C^k$ topologies on $C^k(X)$.
In particular, $C^k$ neighborhoods are open in $\cK$.
According to K\"othe, the topology on $C^{\an}(X)$ is complete, see \cite{Kt}.

So, what happens with the analytic variants of Theorems 1.1 through 1.4?
Theorem 1.2 becomes tautological, but the rest will stay meaningful and true.
We do not formulate the analytic version of the uniqueness theorem, because its proof would be the same as of the smooth version, with one modification. 
In proving the analog of Theorem 3.3, we would choose a suitable complexification $X^{\bC}$ of $X$ and instead of the space $\cH(\omega_{\bT}\oplus\omega)$  we would work with the space of potentials $v\colon \bT\times X\to \bR$ that have a smooth extension to $\bT\times X^{\bC}$, holomorphic in the $X^{\bC}$ variable.
However, the analytic variants of Theorems 1.1 and 1.4 are not completely obvious, so we discuss them:

\begin{thm}Suppose $\cU\subset\cK$ is open, $F\colon \cU\to\cK'$ is a $C^1$ isometry and $u\in\cU$.
Then $\dim X'=\dim X=n$, and there are an analytic diffeomorphism $\varphi\colon X'\to X$ and real numbers
$a=\pm 1$ and $b=0\text{ or }2a/\int_X\omega^n$ such that
$$
\varphi^*\omega_u=\pm \omega_{F(u)}\qquad\text {and}\qquad F_*\xi=a\varphi^*\xi-b\int_X\xi \omega_u^n\quad\text{for $\xi\in T_u\cK\approx C^{an}(X)$.}
$$

\end{thm}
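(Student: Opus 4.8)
The plan is to reduce the theorem to Theorem 1.1 by building, out of the analytic data, a \emph{smooth} isomorphism to which the machinery of Section 4 applies; we extend the differential $\Phi=F_*|T_u\cK$ rather than $F$ itself, since a $\cK$--open $\cU$ need not contain any $C^2$--ball of $\cH$. Set $u'=F(u)$. Because $F^{-1}$ is also a $C^1$ isometry, $\Phi\colon C^{\an}(X)\to C^{\an}(X')$ is a topological linear isomorphism, and it preserves the Mabuchi norm, $|\Phi\xi|_{u'}=|\xi|_u$. Write $\{\ ,\ \}$, $\{\ ,\ \}'$ for the Poisson brackets of $\omega_u$, $\omega_{u'}$, which make sense on $C^{\an}$ and $C^\infty$ alike. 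The first step is to prove that $\Phi$ satisfies the Lie--triple identity
$$
\{\{\Phi\xi,\Phi\eta\}',\Phi\zeta\}'=\Phi\{\{\xi,\eta\},\zeta\},\qquad\xi,\eta,\zeta\in C^{\an}(X).
$$
Every potential of $\cK$ has analytic $\omega$, so for an arbitrary two--plane $P\subset T_u\cK$ Lemma 4.3 furnishes a very analytic exponential surface $e\colon\Delta\to\cK$ with $e_*T_0\Delta=P$. Its radial geodesics are very analytic, hence $C^1$ curves in $\cK$; as geodesics of $\cH$ they minimize energy among all $C^1$ curves of $\cH$ with the same endpoints (\cite[p.\ 219, Corollary 3]{Ch}), a fortiori among such curves lying in $\cK$, so by \cite[Corollary 6.4]{L2} applied to Mabuchi's metric on $\cK$ their $F$--images are geodesics --- the $\cK,\cK'$ analog of Lemma 3.1. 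Since $u'$, $F_*\xi$, $F_*\eta$ are automatically analytic and $\omega_{u'}$ is analytic, Lemma 4.3 shows that $F\circ e$ coincides with a very analytic exponential surface; its circles of radius $r$ have the same lengths as those of $e$, whence (4.3) gives $K'(F_*P)=K(P)$ for every $P$. By Mabuchi's curvature formula this is exactly the displayed identity.

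The key step is to extend $\Phi$ to a topological linear isomorphism $\hat\Phi\colon C^\infty(X)\to C^\infty(X')$ still satisfying the triple identity and preserving the Mabuchi norm. Given $g\in C^\infty(X)$, choose analytic $g_k\to g$ in $C^\infty(X)$ (Proposition 2.1). For $\alpha_i,\beta_i\in C^{\an}(X)$, iterating the triple identity gives
$$
\ad_{\{\Phi\alpha_1,\Phi\beta_1\}'}\!\cdots\ad_{\{\Phi\alpha_N,\Phi\beta_N\}'}\Phi(g_k)=\Phi\bigl(\ad_{\{\alpha_1,\beta_1\}}\!\cdots\ad_{\{\alpha_N,\beta_N\}}g_k\bigr),
$$
and the right side converges in $C^\infty(X)$, hence in $L^2$; as $\Phi$ is bounded in $L^2$ norm, these iterated derivatives of $\Phi(g_k)$ converge in $L^2(X')$. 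On any symplectic manifold the $\alpha_i,\beta_i$ can be chosen so that the Hamiltonian vector fields $\sgru_{u'}\{\Phi\alpha_i,\Phi\beta_i\}'=\pm[\sgru_{u'}\Phi\alpha_i,\sgru_{u'}\Phi\beta_i]$ span $T_qX'$ for every $q$, since at a point where $\sgru_{u'}\Phi\alpha_i$ vanishes this Lie bracket ranges over all of $T_qX'$ as the $2$--jets of $\Phi\alpha_i,\Phi\beta_i$ vary. Convergence in $L^2$ of all iterated derivatives of $\Phi(g_k)$ along such a spanning family forces $\Phi(g_k)$ to converge in $C^\infty(X')$; set $\hat\Phi(g)$ equal to this limit. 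Density and continuity of the triple bracket make $\hat\Phi$ well defined, linear, continuous, norm preserving, and compatible with the triple bracket; it is bijective, its inverse being the analogous extension of $\Phi^{-1}$.

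Now the argument of Section 4 that proves Theorem 1.1 applies to $\hat\Phi$. Lemma 4.4 (via Cartan's theorem and Lemmas 4.6--4.7) yields a single sign with $\{\hat\Phi\xi,\hat\Phi\eta\}'=\pm\hat\Phi\{\xi,\eta\}$ for all $\xi,\eta\in C^\infty(X)$; the Atkin--Grabowski theorem \cite[(8.10) Theorem]{AG}, applied to $\hat\Phi$ or $-\hat\Phi$, produces a diffeomorphism $\varphi\colon X'\to X$ --- so $\dim X'=\dim X=n$ --- and reals $a,b$ with $a\varphi^*\omega_u=\pm\omega_{u'}$ and $\hat\Phi\xi=a\varphi^*\xi-b\int_X\xi\,\omega_u^n$; the norm identity forces $a=\pm1$, $b=0$ or $2a$, and $\varphi^*\omega_u=\pm\omega_{u'}$, exactly as in Section 4. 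Finally, $\hat\Phi$ restricted to $C^{\an}(X)$ is $\Phi=F_*|T_u\cK$, which maps $C^{\an}(X)$ into $C^{\an}(X')$; since $\varphi^*\xi=a^{-1}\bigl(\hat\Phi\xi+b\int_X\xi\,\omega_u^n\bigr)$, the pullback $\varphi^*$ maps $C^{\an}(X)$ into $C^{\an}(X')$, and testing on the coordinate functions of a real analytic embedding $X\hookrightarrow\bR^m$ shows $\varphi$ is real analytic. The relation $F_*\xi=\Phi\xi=a\varphi^*\xi-b\int_X\xi\,\omega_u^n$ on $T_u\cK\approx C^{\an}(X)$ completes the proof.

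The main obstacle is exactly this passage from the analytic to the smooth setting: Atkin--Grabowski and Lemmas 4.4, 4.6, 4.7 are genuinely facts about $(C^\infty(X),\{\ ,\ \})$ with no usable analytic counterpart --- e.g.\ the ideals $I_p$ of Lemma 4.7 collapse to the constants inside $C^{\an}(X)$ --- so the smooth isomorphism $\hat\Phi$ must be manufactured first. A secondary point requiring care is the $\cK,\cK'$ analog of Lemma 3.1, which depends on checking that the mild hypothesis behind \cite[Corollary 6.4]{L2} is met by Mabuchi's metric on $\cK$ in its finer topology.
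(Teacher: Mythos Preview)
Your proposal is correct and follows essentially the same route as the paper: establish the Lie--triple identity for $\Phi=F_*|T_u\cK$ via exponential surfaces and the $\cK$--analog of Lemma 3.1, extend $\Phi$ to a continuous isomorphism $C^\infty(X)\to C^\infty(X')$ using Sobolev--type control afforded by the triple identity together with the $L^2$ isometry, then invoke Lemma 4.4 and Atkin--Grabowski exactly as in Section 4. The paper packages the extension step as a separate Lemma 7.3 and does it a touch more cleanly: instead of commutator Hamiltonian fields it defines Sobolev norms $\|\xi\|_k^2=\sum\int_X\{\ldots\{\xi,\xi_1\},\ldots,\xi_k\}^2\omega^n$ with the $\xi_j$ drawn from a fixed finite $\Xi\subset C^{\an}(X)$ whose symplectic gradients (and those of $\Phi\xi_j$) span every tangent space, and observes that for \emph{even} $k$ the triple identity iterates from the inside out to give $\|\Phi\xi\|'_k=\|\xi\|_k$, so $\Phi$ extends to Sobolev isometries and hence to $C^\infty$---this sidesteps your auxiliary claim that commutator fields $\sgru\{\Phi\alpha,\Phi\beta\}'$ span.
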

 
\begin{thm}Suppose for $u\in\cK$, $u'\in\cK'$ we are given an isomorphism $\Phi\colon T_u\cK\to T_{u'} \cK'$ of vector spaces.
If $|\Phi\xi|_{u'}=|\xi|_u$ for all $\xi\in T_u\cK$ and 
$$
\{\Phi\xi,\Phi\eta\}_{u'}=\Phi\{\xi,\eta\}_u\text{ for }\xi,\eta\in T_u\cK\quad\text{or}\quad\{\Phi\xi,\Phi\eta\}_{u'}=-\Phi\{\xi,\eta\}_u\text{ for }\xi,\eta\in T_u\cK,
$$
then there is a $C^\infty$ isometry $F\colon\cU\to\cK'$ of a neighborhood $\cU\subset\cK$ of $u$ such that $F(u)=u'$ and $F_*|T_u\cU=\Phi$.
\end{thm}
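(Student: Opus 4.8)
The plan is to reduce Theorem 7.2 to Theorem 1.4, watching the two places where the analytic setting differs: the hypotheses constrain $\Phi$ only on $T_u\cK\approx C^{\an}(X)$, not on all of $C^\infty(X)$; and the isometry produced must be a diffeomorphism for the finer direct-limit topologies on $\cK,\cK'$ of this section, not merely for the $C^\infty$ topologies. I begin, as in the proof of Theorem 1.4, by replacing $\omega$ with $\omega_u$ and $\omega'$ with $\omega_{u'}$, which is legitimate because $\omega_u,\omega_{u'}$ are analytic; this reduces matters to the case $u=u'=0$ with $\omega,\omega'$ themselves analytic, so that $\cK=\cH\cap C^{\an}(X)$ consists of the analytic potentials of analytic K\"ahler forms, and likewise for $\cK'$.

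The next step is to pin down the form of $\Phi\colon C^{\an}(X)\to C^{\an}(X')$. The structure theorem of Atkin and Grabowski \cite{AG} is available in the real-analytic category as well as the smooth one: the Lie-theoretic analysis of section 4 carries over once, since $X$ is connected, ``vanishing to infinite order at a point'' is everywhere replaced by ``locally constant'' --- a non-constant analytic function on a connected manifold is non-constant on every nonempty open set, so enough Hamiltonian vector fields remain available to embed $X$. One thereby obtains a diffeomorphism $\varphi\colon X'\to X$ and numbers $a,b$ with $\pm\varphi^*\omega=\omega'$ such that $\Phi$ or $-\Phi$ equals $\xi\mapsto a\varphi^*\xi-b\int_X\xi\,\omega^n$; since $\Phi$ is an isomorphism, $a\neq 0$, and feeding this formula into the $L^2$ isometry hypothesis, tested first against analytic $\xi$ with $\int_X\xi\,\omega^n=0$ and then against a nonzero constant, forces $a=\pm1$ and $b=0$ or $2a$, exactly as in the proof of Theorem 1.1. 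Finally, because $\Phi$ takes $C^{\an}(X)$ into $C^{\an}(X')$ and pulling back an analytic embedding of $X$ into a Euclidean space (Grauert, as in the proof of Proposition 2.1) recovers $\varphi$ itself, $\varphi$ is analytic.

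I then extend $\Phi$ to $\tilde\Phi\colon C^\infty(X)\to C^\infty(X')$ by the same formula. Since $\varphi$ is an analytic diffeomorphism with $\pm\varphi^*\omega=\omega'$ and $a=\pm1$, $b=0$ or $2a$, one checks directly that $\tilde\Phi$ is a vector-space isomorphism which preserves Mabuchi length, satisfies $\{\tilde\Phi\xi,\tilde\Phi\eta\}_{\omega'}=\pm\tilde\Phi\{\xi,\eta\}_\omega$ (the integral term is Poisson-central and $\int_X\{\xi,\eta\}\,\omega^n=0$), and maps analytic functions to analytic functions. Hence Theorem 1.4 applies to $\tilde\Phi$ and produces a $C^\infty$ isometry $F\colon\cV\to\cH'$ of a $C^2$-neighborhood $\cV$ of $0\in\cH$ with $F(0)=0$ and $F_*|T_0\cV=\tilde\Phi$; moreover, by the proof of Theorem 1.4, this $F$ is the composition of the explicit normalizing local isometries exhibited in the paragraphs following the proof of Theorem 1.1 with the transformation of Theorem 6.6 attached to some holomorphic symplectomorphism $\Theta$ of neighborhoods of the zero sections in the deformed cotangent bundles $X(\omega),X'(\omega')$ --- zero sections that are here analytic and totally real. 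Put $\cU=\cV\cap\cK$, open in $\cK$ because $C^2$-neighborhoods are open in $\cK$. By Theorem 1.2 --- or directly from the integral formula of Theorem 6.6, whose integrand is analytic when the potential is --- $F$ carries $\cU$ into $\cK'$, and the same applies to $F^{-1}$; thus $F(\cU)=F(\cV)\cap\cK'$ is open in $\cK'$ and $F|\cU$ is a bijection onto it whose differential at each point retains the isometry property, with $F(0)=0$ and $F_*|T_0\cU=\tilde\Phi|C^{\an}(X)=\Phi$.

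It remains to check that $F|\cU$ and its inverse are $C^\infty$ --- indeed real-analytic --- for the direct-limit topologies, and this I expect to be the main obstacle. I would settle it by inspecting the construction of section 6. Fix a complexification $X^\bC$ of $X$ and a neighborhood $U_j$ of $X$ in it as in the definition of the K\"othe topology. If $w\in C^{\an}(X)$ is the restriction of a bounded holomorphic function on $U_j$, then $g_{tw}=t\,\partial w$ extends holomorphically, with bounds uniform over $t\in[0,1]$ and over $w$ in a bounded subset of $H^\infty(U_j)$, to a fixed smaller neighborhood; composing with the fixed holomorphic maps $\Theta$ and $\pi'$, inverting $\theta_{tw}$ by the holomorphic inverse function theorem with quantitative bounds (here $\theta_{tw}$ is a small perturbation of the fixed analytic diffeomorphism $\theta_0$), forming $w\circ\theta_{tw}^{-1}$, and integrating in $t$ each preserve holomorphic extension with uniform bounds to some still smaller neighborhood $U_k$. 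Hence $F$ sends bounded subsets of $H^\infty(U_j)$, restricted to $X$ and near $0$, into bounded subsets of $H^\infty(U_k)$ restricted to $X$, which yields continuity --- and in fact real-analyticity --- of $F$ for the K\"othe topologies; the differential $F_*\xi=\xi\circ\theta_w^{-1}$ and the higher derivatives are bounded the same way, and running the identical estimates for $\Theta^{-1}$ disposes of $F^{-1}$. The normalizing local isometries composed into $F$ (the Monge--Amp\`ere energy shift, the generalized Legendre transform, the opposite-complex-structure map) are either of this same section-6 type or manifestly real-analytic for the K\"othe topology, so the composite $F|\cU\colon\cU\to\cK'$ is a $C^\infty$ isometry with $F(0)=0$ and $F_*|T_0\cU=\Phi$. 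Undoing the initial normalization gives Theorem 7.2.
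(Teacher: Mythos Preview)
Your overall strategy---reduce to Theorem 1.4 by first identifying the form of $\Phi$, then run the section~6 construction and restrict to analytic potentials---is the paper's strategy too. The difference lies in how you identify the form of $\Phi$, and here your argument has a gap.

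You assert that the Atkin--Grabowski structure theorem holds for the Lie algebra $(C^{\an}(X),\{\,,\,\})$, and you justify this by saying that ``the Lie-theoretic analysis of section~4 carries over once `vanishing to infinite order at a point' is replaced by `locally constant'.'' But this replacement destroys exactly the mechanism that makes the section~4 analysis work. In Lemma~4.7 the infinite--codimension maximal ideals $I_p$ are point--dependent, and it is the bijection between $\{I_p\}_{p\in X}$ and $\{I'_q\}_{q\in X'}$ that produces the map $\psi\colon X\to X'$ (and thence, in the Atkin--Grabowski argument, the diffeomorphism $\varphi$). In the analytic category, an analytic $\xi$ with $\xi-\xi(p)$ vanishing to infinite order at $p$ is constant on the connected manifold $X$; your replacement ``locally constant'' gives the same thing. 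So every $I_p$ collapses to the constants, independently of $p$, and the point--recovering step is lost. Whether or not \cite{AG} actually covers the analytic case, your justification does not establish it.

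The paper handles this cleanly by invoking Lemma~7.3 instead: the bracket hypothesis on $\Phi$ implies the triple--bracket identity, and together with the $L^2$ isometry this lets one build Sobolev $k$--norms from iterated Poisson brackets that $\Phi$ preserves. Hence $\Phi$ extends continuously to an isomorphism $\Psi\colon C^\infty(X)\to C^\infty(X')$, and now the smooth Atkin--Grabowski theorem applies directly to yield $\varphi$, $a$, $b$. From there the paper simply follows the proof of Theorem~1.4 and observes that the resulting isometry maps analytic functions to analytic functions, so its restriction to $\cK$ is the desired $F$.

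Your final paragraph, verifying $C^\infty$ regularity of $F|\cU$ for the K\"othe direct--limit topology, is a reasonable concern and your sketch is plausible, but the paper does not carry out this check; it is content with the observation that the section~6 construction preserves analyticity.
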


The proof of these theorems requires a lemma:

\begin{lem}Suppose $(X,\omega),(X',\omega')$ are compact real analytic symplectic manifolds of dimension $2n$, $2m$, $\{,\}$, $\{,\}'$ the corresponding Poisson brackets, $\Phi\colon C^{\an}(X)\to C^{\an}(X')$ is an isomorphism of vector spaces, and $c>0$ is a constant.
If for all $\xi,\eta,\zeta\in C^{\an}(X)$
\begin{equation}
\int_{X'} (\Phi\xi)^2 {\omega'}^m=c\int_X \xi^2\omega^n\qquad\text{and}\qquad\{\{\Phi\xi,\Phi\eta\}' ,\Phi\zeta\}'=\Phi\{\{\xi,\eta\},\zeta\},
\end{equation}
then $\Phi$ extends to an isomorphism $\Psi\colon C^\infty(X)\to C^\infty(X')$ of Fr\'echet spaces such that
\begin{equation}
\int_{X'} (\Psi\xi)^2 {\omega'}^ m=c\int_X \xi^2\omega^n\qquad\text{and}\qquad\{\{\Psi\xi,\Psi\eta\}' ,\Psi\zeta\}'=\Psi\{\{\xi,\eta\},\zeta\}
\end{equation}
for $\xi,\eta,\zeta\in C^\infty(X)$.
\end{lem}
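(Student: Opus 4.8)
The plan is to use the $L^2$ hypothesis to extend $\Phi$ to a unitary map between Hilbert space completions, and then to recognize $C^\infty(X)$ and $C^\infty(X')$ inside those Hilbert spaces as the common domains of the powers of a suitable elliptic operator assembled from Hamiltonian vector fields --- an operator that the extended $\Phi$ turns out to conjugate correctly because of the triple--bracket hypothesis. Once this is done the conclusion follows almost formally.

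First I would extend $\Phi$. As in the proof of Proposition 2.1, $C^{\an}(X)$ is dense in $C^\infty(X)$ for the $C^\infty$ topology, hence dense in $L^2(X,\omega^n)$; since $\Phi$ preserves $L^2$ norms and carries $C^{\an}(X)$ onto the dense subspace $C^{\an}(X')$, it extends to a unitary $\bar\Phi\colon L^2(X,\omega^n)\to L^2(X',{\omega'}^m)$. Granting for the moment that $\bar\Phi(C^\infty(X))=C^\infty(X')$, the restriction $\Psi=\bar\Phi|C^\infty(X)$ is the extension sought: it is a linear bijection, it and its inverse are continuous for the Fr\'echet topologies by the closed graph theorem (the graph is closed, since the $L^2$--extension $\bar\Phi$ respects $L^2$ limits), the first identity of the conclusion is immediate from unitarity, and the triple--bracket identity passes from the $C^\infty$--dense subspace $C^{\an}(X)$ to all of $C^\infty(X)$ because $\Psi$ and the Poisson brackets --- being bidifferential operators --- are continuous for the $C^\infty$ topology.

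It remains to prove $\bar\Phi(C^\infty(X))=C^\infty(X')$. Put $N^{\an}(X)=\{\zeta\in C^{\an}(X)\colon\int_X\zeta\,\omega^n=0\}$. One needs the analytic version of Lemma 4.6: that $N^{\an}(X)$ is the commutator subalgebra of the Poisson--Lie algebra $C^{\an}(X)$, every one of its elements being a finite sum $\sum_j\{\xi_j,\eta_j\}$ of brackets of analytic functions. This is proved along the lines of Lemma 4.6, using an analytic embedding of $X$ in some $\bR^m$ and the fact that the linear equations occurring there can be solved analytically over the compact base. Combined with the Jacobi identity, the triple--bracket hypothesis then yields, for every $\zeta\in N^{\an}(X)$, the operator identity $\Phi\,\ad_\zeta\,\Phi^{-1}=\ad_{\zeta^*}$ on $C^{\an}(X')$, where $\ad_\zeta=\{\zeta,\ \cdot\ \}$ and $\zeta\mapsto\zeta^*$ is a well--defined linear bijection of $N^{\an}(X)$ onto $N^{\an}(X')$ --- well--defined because $\zeta^*$ is pinned down by $\ad_{\zeta^*}$ up to a constant and must lie in $N^{\an}(X')$, bijective because $\Phi$ maps $C^{\an}(X)$ onto $C^{\an}(X')$. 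Now choose finitely many $\zeta_1,\dots,\zeta_k\in N^{\an}(X)$ so that the Hamiltonian fields $\sgru_\omega\zeta_i$ span $T_pX$ at every $p$ and the fields $\sgru_{\omega'}\zeta_i^*$ span $T_qX'$ at every $q$: this is possible because an analytic embedding of $X$, its coordinate functions normalized to have zero $\omega^n$--integral, already immerses $X$, and an analytic embedding of $X'$ provides immersing functions on $X'$ which --- $\Phi$ being onto --- appear among the $\zeta_i^*$. Then $L=-\sum_i(\sgru_\omega\zeta_i)^2$ and $L'=-\sum_i(\sgru_{\omega'}\zeta_i^*)^2$ are nonnegative elliptic operators, essentially self--adjoint on $C^\infty$ (the fields in question are divergence free, since $\Phi$ preserves $L^2$ inner products), for which $C^{\an}$ is a core (it is $C^\infty$--dense, and the $C^\infty$ topology is finer than the graph norm topology). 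On $C^{\an}(X)$, $\Phi$ conjugates $L$ to $L'$; passing to closures gives $\bar\Phi\bar L\bar\Phi^{-1}=\bar L'$, hence $\bar\Phi$ maps $\mathrm{Dom}\,\bar L^N$ onto $\mathrm{Dom}\,\bar L'^N$ for every $N$. Since elliptic regularity identifies $\bigcap_N\mathrm{Dom}\,\bar L^N$ with $C^\infty(X)$ and $\bigcap_N\mathrm{Dom}\,\bar L'^N$ with $C^\infty(X')$, we conclude $\bar\Phi(C^\infty(X))=C^\infty(X')$.

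I expect the main obstacle to be the analytic incarnation of Lemma 4.6 together with the first--order algebra it supports. In the analytic category the maximal--ideal arguments of Section 4 collapse, since a function flat at a point vanishes identically, so the geometry hidden in $\Phi$ has to be recovered instead through the commutator algebra and the associated Hamiltonian flows; checking that the relevant linear equations are analytically solvable, and that $\zeta\mapsto\zeta^*$ is well defined and bijective, takes some care. The surrounding operator theory --- essential self--adjointness of these second order operators, the core property of $C^{\an}$, conjugation of closures by $\bar\Phi$, and the elliptic--regularity description of $C^\infty$ --- is standard but should be assembled attentively.
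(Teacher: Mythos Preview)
Your approach is correct and shares its core idea with the paper's: control regularity through iterated Poisson brackets with a finite family whose Hamiltonian vector fields span everywhere on both $X$ and $X'$.  The execution, however, is quite different.  The paper bypasses everything you flag as the ``main obstacle.''  It simply chooses a finite $\Xi\subset C^{\an}(X)$ so that both $\{\sgru\xi:\xi\in\Xi\}$ and $\{\sgru'\Phi\xi:\xi\in\Xi\}$ span all tangent spaces, defines Sobolev--type norms
\[
\|\xi\|_k^2=\sum_{\xi_1,\ldots,\xi_k\in\Xi}\int_X\{\ldots\{\xi,\xi_1\},\ldots,\xi_k\}^2\,\omega^n,
\]
and the analogous $\|\cdot\|'_k$ on $X'$ using the $\Phi\xi_i$ as test functions.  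For even $k$ the two hypotheses combine directly---one applies the triple--bracket identity $k/2$ times and then the $L^2$ identity once---to give $\|\Phi\xi\|'_k=\|\xi\|_k$, whence $\Phi$ extends to an isometry $W^{k}(X)\to W^{k}(X')$ for every even $k$, and so to a Fr\'echet isomorphism $C^\infty(X)\to C^\infty(X')$.  The crucial observation is that the test functions on $X'$ are just $\Phi\xi_i$; there is no need to introduce an auxiliary map $\zeta\mapsto\zeta^*$, no need for an analytic version of Lemma~4.6, and no appeal to essential self--adjointness, cores, or elliptic regularity.  Your route via $L^2$--unitarity, conjugation of elliptic operators, and $\bigcap_N\mathrm{Dom}\,\bar L^N=C^\infty$ works, and is perhaps more conceptually satisfying, but it buys that satisfaction at the cost of a result (the analytic commutator lemma) that is not strictly needed: even within your framework you could take each $\zeta_i$ to be a single bracket $\{\xi_i,\eta_i\}$ and set $\zeta_i^*=\{\Phi\xi_i,\Phi\eta_i\}'$, avoiding the analytic Lemma~4.6 entirely.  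The spanning condition on both sides is then arranged using surjectivity of $\Phi$, just as in the paper.
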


\begin{proof}Only continuous extension needs justification; (7.2) will then follow since $C^{\an}(X)$ is dense in $C^\infty(X)$.
Fix a finite collection $\Xi\subset C^{\an}(X)$ such that $\sgrad\xi$ for $\xi\in\Xi$ span all tangent spaces $T_x X$ and $\sgrad\Phi\xi$ span all tangent spaces $T_y X'$.
We can define on $C^{\an}(X)$ Sobolev norms of various orders $k$ by
$$
\|\xi\|_k^2=\sum\int_X \{\ldots \{\xi,\xi_1\},\xi_2\},\ldots,\xi_k\}^2\omega^n,
$$
the sum over all choices $\xi_1,\ldots,\xi_k\in\Xi$; and similarly Sobolev norms $\|\quad\|'_k$ on $C^{\an}(X')$, using the $\Phi\xi_j$ instead of $\xi_j$.
The norm $\|\quad\|_k$ is equivalent to any Sobolev $k$--norm defined using local coordinates.
Now (7.1) implies that $\|\Phi\xi\|'_k=\sqrt c\|\xi\|_k$ for even $k$, whence $\Phi$ extends uniquely to an isomorphism $W^k(X)\to W^k (X')$ of Sobolev spaces for all even $k$, and also to an isomorphism $C^\infty(X)\to C^\infty(X')$ of Fr\'echet spaces.
\end{proof}

\begin{proof}[Proof of Theorem 7.1]
As in the proof of Theorem 1.1 in Section 2, $\Phi=F_*|T_u\cU$ will satisfy (7.1) for $\xi,\eta,\zeta\in T_u \cU\approx C^{an}(X)$.
The first relation is the isometry condition, the second comes from isometries preserving curvature.
By Lemma 7.3 $\Phi$ then extends to a linear isomorphism $\Psi\colon C^\infty (X)\to C^\infty (X')$ satisfying (7.2).
The form of $F_*|T_u \cK=\Psi|C^{\an}(X)$ now follows from Lemma 4.4 and the Atkin--Grabowski theorem \cite[(8.10) Theorem]{AG}, as in the proof of Theorem 1.1.
\end{proof}
\begin{proof}[Proof of Theorem 7.2]
Again we extend $\Phi$ to $\Psi\colon T_u\cH\to T_{u'}\cH'$ and again this means that $\Psi$  must be of form
$$
\Psi\xi=a\varphi^*\xi-b\int\xi\omega^n,
$$
with an analytic diffeomorphism $\varphi\colon X'\to X$ satisfying $\varphi^*\omega_u=\pm \omega_{u'}$; $a,b$ subject to the previous restrictions.
Given this, we can follow the proof of Theorem 1.4 in section 6, and construct an isometry between neighborhoods $u\in\cH$, $u'\in\cH'$, whose differential is $\Psi$.
Since the isometry in the proof of Theorem 1.4 maps analytic functions to analytic functions, its restriction to $\cK$ is the $F$ sought.
\end{proof}

\section{Appendix}

In this Appendix we deduce Theorem 4.5 of Cartan from a slightly more general result.
For brevity, we write $[ab]$ for the Lie bracket of elements of a Lie algebra; but the commutator of linear operators we still denote [\,,\,].

\begin{thm}
Let $V$ be a finite dimensional vector space over a field $k$ of characteristic 0 and $[\quad], [\quad]'$ brackets that turn $V$ into Lie algebras.
Suppose $[[ab]c]=[[ab]'c]'$ for all $a,b,c\in V$.
If $(V, [\quad])$ is semisimple, then so is $(V,[\quad]')$, and $V=V_+\oplus V_-$ with $V_\pm$ ideals both for $[\quad]$ and $[\quad]'$, that satisfy $[\quad]'=\pm [\quad]$ on $V_\pm$.
\end{thm}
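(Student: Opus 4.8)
The plan is to distill from the common triple bracket a single linear involution $\phi$ of $V$ with $[ab]'=\phi([ab])$ that is $\pm\id$ on each simple factor of $\fg$, and then read off the splitting. Throughout write $\fg=(V,[\ ])$, $\fg'=(V,[\ ]')$, and let $\ad,\ad'\colon V\to\mathfrak{gl}(V)$ be the two adjoint maps; the hypothesis $[[ab]c]=[[ab]'c]'$ says precisely that $\ad([ab])=\ad'([ab]')$ for all $a,b$.

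First I would extract two consequences of $\fg$ being semisimple. Since $Z(\fg)=0$: if $z\in Z(\fg')$ then $0=[[zw]'c]'=[[zw]c]$ for all $w,c$, so $[zw]\in Z(\fg)=0$ for all $w$, hence $z\in Z(\fg)=0$; thus $Z(\fg')=0$ and both $\ad,\ad'$ are injective. Also $[[\fg\fg]\fg]=\fg$, while the left side is the span of all triple brackets, hence equals $[[\fg'\fg']'\fg']'$; therefore $[\fg'\fg']'=V$, i.e. $\fg'$ is perfect. Consequently every $z\in V$ is a sum $z=\sum_i[a_ib_i]'$, and I set $z^{\flat}=\sum_i[a_ib_i]$. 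This is well defined and linear, because $\ad(z^{\flat})=\sum_i\ad([a_ib_i])=\sum_i\ad'([a_ib_i]')=\ad'(z)$ and $\ad$ is injective; and $\flat$ is bijective because $z^{\flat}=0$ forces $\ad'(z)=0$, hence $z\in Z(\fg')=0$. So $\flat$ is a linear automorphism of $V$ with $\ad'(z)=\ad(z^{\flat})$ throughout.

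Next I would promote $\flat$ to a Lie isomorphism. For $a,b\in V$, $\ad\bigl([ab]'^{\flat}\bigr)=\ad'([ab]')=[\ad'a,\ad'b]=[\ad(a^{\flat}),\ad(b^{\flat})]=\ad([a^{\flat}b^{\flat}])$, so $[ab]'^{\flat}=[a^{\flat}b^{\flat}]$; hence $\flat$ is a Lie isomorphism $\fg'\to\fg$, so $\fg'$ is semisimple. Feeding this into the hypothesis, $\ad([a^{\flat}b^{\flat}])=\ad\bigl([ab]'^{\flat}\bigr)=\ad'([ab]')=\ad([ab])$, so also $[a^{\flat}b^{\flat}]=[ab]$. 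Setting $\phi=\flat^{-1}$ and combining these two identities gives $[ab]'=\phi([ab])$ for all $a,b$, equivalently $\ad'(z)=\phi\circ\ad(z)$. Moreover $\phi$ is a Lie isomorphism $\fg\to\fg'$, so from $[\phi w,c]'=[w,c]$ (which is $\ad'(\phi w)=\ad(w)$ rewritten) and the substitution $c\mapsto\phi c$ one gets $\phi([wc])=[w,\phi c]$, i.e. $\phi$ commutes with every $\ad(w)$. Then I would compute $\ad'([ab]')$ in two ways: on one hand $[\ad'a,\ad'b]=[\phi\ad a,\phi\ad b]=\phi^{2}\ad([ab])$ using the commutation; on the other hand $\phi\ad([ab]')=\phi\ad(\phi[ab])=\ad([ab])$. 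Hence $\ad([ab])(\phi^{2}-\id)=0$ for all $a,b$, so $(\phi^{2}-\id)V$ is annihilated by every $\ad(z)$, $z\in[\fg\fg]=V$; since $\bigcap_z\ker\ad(z)=Z(\fg)=0$ this forces $\phi^{2}=\id$.

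Finally I would diagonalize $\phi$ using structure theory. Decompose $\fg=\bigoplus_{i=1}^{r}\fg_i$ into its simple ideals; each $\fg_i$ is an irreducible $\fg$-module, and distinct $\fg_i$ have different annihilators in $\fg$, hence are pairwise non-isomorphic as $\fg$-modules. So the $\fg$-module endomorphism $\phi$ preserves each $\fg_i$ and restricts there to an element of the division algebra $\mathrm{End}_{\fg_i}(\fg_i)$ (Schur); an element of a division algebra over a characteristic-zero field whose square is $1$ equals $\pm1$, so $\phi=\var_i\,\id$ on $\fg_i$ with $\var_i=\pm1$. Put $V_{\pm}=\bigoplus_{\var_i=\pm1}\fg_i$: these are ideals of $\fg$ with $[V_+V_-]=0$, and $[ab]'=\phi([ab])$ gives $[ab]'=\pm[ab]$ for $a,b\in V_{\pm}$ and $[ab]'=\phi(0)=0=[ab]$ for $a\in V_+$, $b\in V_-$; so $V_{\pm}$ are ideals for $[\ ]'$ too, $[\ ]'=\pm[\ ]$ on $V_{\pm}$, and $\fg'\cong\fg$ is semisimple. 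I expect the main obstacles to be getting $\flat$ cleanly defined — the interplay of perfectness of $\fg'$ with injectivity of the two adjoint maps is what makes it well defined and a Lie map — and the step $\phi^{2}=\id$; the passage to the division algebra $\mathrm{End}_{\fg_i}(\fg_i)$ is the only place where not having $k$ algebraically closed intervenes.
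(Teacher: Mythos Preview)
Your proof is correct and takes a genuinely different route from the paper's. The paper argues structurally: it first observes that the Killing forms of $\fg$ and $\fg'$ coincide (since products $\ad_{a_1}\cdots\ad_{a_{2m}}$ agree with their primed counterparts), hence $\fg'$ is semisimple; then it finds a common Cartan subalgebra $\fh$ via Fitting null components, passes to the split case, compares root space decompositions to show $\fg'_\alpha=\fg_\alpha$ or $\fg_{-\alpha}$ for each root $\alpha$, partitions the root system accordingly, and assembles $V_\pm$ from the resulting pieces; the non-split case is handled by a Galois descent. Your argument bypasses root systems entirely: you package the hypothesis into a single linear automorphism $\phi$ of $V$ with $[ab]'=\phi([ab])$, show $\phi$ intertwines the adjoint representation of $\fg$, deduce $\phi^2=\id$, and then diagonalize $\phi$ using the decomposition of $\fg$ into simple ideals together with Schur's lemma. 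Your route is shorter and more conceptual, and handles arbitrary $k$ of characteristic $0$ uniformly without a separate split/non-split dichotomy; the paper's approach, by contrast, makes the geometry of the root system visible and shows explicitly how $V_\pm$ sit relative to a Cartan subalgebra. One minor remark: when you pass from $\phi^2\ad([ab])=\ad([ab])$ to ``$(\phi^2-\id)V$ is annihilated by every $\ad(z)$'' you are silently using that $\phi$ commutes with $\ad([ab])$ to flip the order of composition; this is fine since you have already established that commutation.
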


From this Theorem 4.5 follows if $[\quad]'$ in Theorem 8.1 is taken to be the pullback of $[\quad]'$ of Theorem 4.5 by $f$.

\begin{proof}In the proof we will use standard facts and notation concerning semisimple Lie algebras that can be found in \cite{J}, especially in III.1,5 and IV.
Write $\fg$, $\fg'$ for the Lie algebras $(V, [\quad])$, $(V,[\quad]')$ and 
$\ad_a=[\cdot a]$, $\ad'_a=[\cdot a]'\colon V\to V$.
In general we will append a prime to objects if they refer to $[\quad]'$.
By assumption $\ad_a\ad_b=\ad'_a\ad'_b$; more generally, the product of an even number of $\ad_{a_i}$ is the same as the product of the corresponding $\ad'_{a_i}$.
In particular, the Killing form $B(a,b)=\tr \ \ad_a\ad_b$ in $\fg$ is the same as in $\fg'$.
Since semisimplicity is equivalent to the nondegeneracy of $B$, we see that $\fg'$ is indeed semisimple.

Given $a\in V$, consider the Fitting null component 
$$
F_a=\{x\in V\colon \ad_a^\nu x=0\text{ for some }\nu\in\bN\}=\{x\in V\colon\ad_a^{2\nu}x=0\text{ for some }\nu\in\bN\}
$$
of $\ad_a$.
This is the same as the Fitting null component of $\ad'_a$.
Among the Fitting null components $F_a$ a minimal dimensional being a Cartan subalgebra, there is an $\fh\subset V$ that is a Cartan subalgebra in both $\fg$ and $\fg'$.
We shall first treat the case when $\fh$ splits, meaning that the eigenvalues of $\ad_h$, $\ad'_h$ are in $k$ for $h\in\fh$.
Then $\fg,\fg'$ are split Lie algebras.
This is automatic if $k$ is algebraically closed.
\end{proof}

Any linear form $\alpha$ on $\fh$ determines its corresponding weight space
$$
\fg_\alpha=\{x\in V\colon \ad_h x=\alpha(h)x\text{ for }h\in\fh\},
$$
and $\fg'_\alpha$ is defined similarly.
We have $\fg_0=\fg'_0=\fh$.
The $\alpha$ for which $\fg_\alpha\neq (0)$ are the roots of $\fg$; the nonzero roots form a set $R\subset\fh^*$,
\begin{equation}
V=\fh\oplus \bigoplus_{\alpha\in R} \fg_\alpha,\qquad\text{ and }\qquad\dim \fg_\alpha=1\text{ for }\alpha\in R.
\end{equation}
Since the negative of any root is also a root, the simultaneous eigenspace decomposition of $\ad_h^2$, $h\in\fh$, is
$$
V=\fh\oplus \bigoplus (\fg_\alpha\oplus\fg_{-\alpha}),
$$
the sum over unordered pairs $(\alpha,-\alpha)$ of nonzero roots.
The action of $\ad_h^2$ on $\fg_\alpha\oplus \fg_{-\alpha}$ is multiplication by $\alpha(h)^2$.
Since the simultaneous eigenspace decomposition of ${\ad'_h}^2=\ad_h^2$ is $V=\fh\oplus\bigoplus (\fg'_\beta\oplus\fg'_{-\beta})$, the sum over unordered 
pairs $(\beta,-\beta)$ of nonzero roots of $\fg'$, there is a bijection $\alpha\mapsto\beta$ between nonzero roots of $\fg,\fg'$ so that $\fg_\alpha\oplus
\fg_{-\alpha}=\fg'_\beta\oplus\fg'_{-\beta}$, and furthermore $\alpha^2=\beta^2$, i.e., $\alpha=\pm\beta$.
Hence the roots of $\fg$ and $\fg'$ are the same.

Next we show that for each $\alpha\in R$ either $\fg'_\alpha=\fg_\alpha$ or $\fg'_\alpha=\fg_{-\alpha}$.
Indeed, $[(\fg_\alpha\oplus\fg_{-\alpha})(\fg_\alpha\oplus \fg_{-\alpha})]=[\fg_\alpha \fg_{-\alpha}]$ is spanned by $h_\alpha\in\fh$ defined by $\alpha(h)=B(h_\alpha,h)$, $h\in\fh$, and so is $[\fg'_\alpha\fg'_{-\alpha}]'=[\fg_\alpha\fg_{-\alpha}]'$.
Furthermore $[ef]=B(e,f)h_\alpha$ for $e\in \fg_\alpha$, $f\in\fg_{-\alpha}$.
This means that $\fg_\alpha,\fg_{-\alpha}$ are eigenspaces of all $[\ad_a,\ad_b]=\ad_{[ab]}$ for $a,b\in\fg_\alpha\oplus\fg_{-\alpha}$, with eigenvalues $\pm\alpha ([ab])$.
When $[ab]=h_\alpha$, $\alpha([ab])=B(\alpha,\alpha)\neq -\alpha([ab])$.
To sum up, $\fg_\alpha\cup\fg_{-\alpha}$ consists of all vectors in $\mathfrak g_\alpha\oplus\mathfrak g_{-\alpha}$ that are eigenvectors of all $[\ad_a,\ad_b]=[\ad'_a,\ad'_b]$, $a,b\in\fg_\alpha\oplus \fg_{-\alpha}=\fg'_\alpha\oplus\fg'_{-\alpha}$.
Hence $\fg_\alpha\cup \fg_{-\alpha}=\fg'_\alpha\cup \fg'_{-\alpha}$, and indeed $\fg'_\alpha=\fg_\alpha$ or $\fg_{-\alpha}$.

Let 
$$
P=\{\alpha\in R\colon \fg'_\alpha=\fg_\alpha\},\qquad N=\{\alpha\in R\colon \fg'_\alpha=\fg_{-\alpha}\}.
$$
If $a\in\fg_\alpha$, $b\in\fg_\beta$ then with any $h\in\fh$
$$
\alpha(h)[ab]=[[ha]b]=[[ha]'b]'=\pm\alpha(h)[ab]',
$$
depending on whether $\alpha\in P$ or not. Canceling $\alpha(h)$, and using symmetry as well we obtain
\begin{equation}
[ab]'=\begin{cases}\phantom{-}[ab]\quad\text{if }\alpha,\beta\in P\\-[ab]\quad\text{if }\alpha,\beta\in N\end{cases}\qquad\text{and}\qquad [ab]'=[ab]=0\text{ otherwise.} 
\end{equation}

Write $\alpha\sim\beta$ if $\alpha,\beta\in P$ or $\alpha,\beta\in N$.
Clearly, $\alpha\sim-\alpha$.
Further, if $\alpha\sim\beta$ and $\alpha+\beta\in R$, then $\alpha+\beta\sim\alpha$, because if, say, $\alpha,\beta\in N$, then
$$
\fg_{\alpha+\beta}=[\fg_\alpha \fg_\beta]=[[\fh \fg_\alpha] \fg_\beta]=[[\fh \fg'_{-\alpha}]' \fg'_{-\beta}]'=[\fg'_{-\alpha}\fg'_{-\beta}]'=\fg'_{-\alpha-\beta}.
$$
But conversely, too: if $\alpha,\beta$, and $\alpha+\beta\in R$, then $\alpha\sim\beta$.
For two among $\alpha,\beta,\alpha+\beta$ will be equivalent; if the two are, say, $\alpha$ and $\alpha+\beta$, then $-\alpha\sim\alpha\sim\alpha+\beta$, whence $\beta=\alpha+\beta-\alpha\sim\alpha$.
More generally, it follows by induction that if $\alpha_1+\alpha_2+\ldots+\alpha_m$ is a sum of nonzero roots, and each partial sum is also a nonzero root, then the $\alpha_j$ and the partial sums are all equivalent.

Choose a family of simple roots; this is a basis of $\fh^*$.
Denote by $P_s,N_s$ the collection of simple roots in $P$ resp.~$N$.
Since any nonzero root or its negative can be written as the sum of simple roots, with each partial sum also a root, it follows that roots in $P$, resp.~$N$, are linear combinations of elements of $P_s$, resp.~$N_s$.
Passing to the duals, $\fh=\fh_+ \oplus \fh_-$, where $\fh_{\pm}$ is the linear span of $h_\alpha$, $\alpha\in P$, resp.~$\alpha\in N$.

In light of (8.1) this implies that
$$
V_+=\fh_+ \oplus\bigoplus_{\alpha\in P}\fg_\alpha\qquad\text{and}\qquad V_-=\fh_-\oplus\bigoplus_{\alpha\in N}\fg_\alpha
$$
satisfy $V_+\oplus V_-=V$. Furthermore, by (8.2)
$$
[ab]'=\begin{cases} \phantom{-}[ab]\quad\text{if } a,b\in V_+\\-[ab]\quad\text{if } a,b\in V_-\end{cases}\qquad\text{and}\qquad [ab]'=[ab]=0\text{ if } a\in V_+,\, b\in V_-.
$$
In other words, $\ad'_v=\pm\ad_v$ for $v\in V_\pm$. This property in fact characterizes $V_\pm$:
\begin{equation}
V_{\pm}=\{v\in V\colon \ad'_v=\pm\ad_v\}.
\end{equation}
Indeed, suppose, for instance, that $\ad'_v=\ad_v$ and write $v=v_++v_-$ with $v_\pm\in V_\pm$. Since
$$
\ad_{v_+}+\ad_{v_-}=\ad_v=\ad'_v=\ad'_{v_+}+\ad'_{v_-}=\ad_{v_+}-\ad_{v_-},
$$
$\ad_{v_-}=0$, and $v_-$ is in the center of $\fg$. But the center is trivial since $\fg$ is semisimple; so $v_-=0$ and  $v=v_+\in V_+$ . 

It remains to show that $V_+$ and $V_-$ are ideals both in $\fg$ and $\fg'$.
By symmetry, it will suffice to verify for $\fg$. Let $v\in V_\pm$ and $w\in V$. Then $[vw]'=\pm [vw]$ and
$$
\ad_{[vw]}=[\ad_v,\ad_w]=[\ad'_v,\ad'_w]=\ad'_{[vw]'}=\pm\ad'_{[vw]},
$$
so indeed $[vw]\in V_\pm$.

We are done with the proof if $\fh$ splits. In general,
fix a basis $h_1,\ldots,h_l$ of $\fh$.
Adjoin to the ground field the eigenvalues of $\ad_{h_j}$, $\ad'_{h_j}$ in an algebraic closure of $k$, to obtain a Galois extension $K$ of $k$.
All $\ad_{h_j}$, $\ad'_{h_j}$ are diagonalizable over $K$, and since $\fh$ is abelian, in fact all $\ad_h$, $\ad'_h$, $h\in \fh$ are also diagonalizable.
This means that the Cartan subalgebras $K\otimes\fh$ of the semisimple algebras $K\otimes\fg$, $K\otimes \fg'$ are split (tensor product over $k$).
Therefore what we have proved so far applies to $K\otimes V$:
\begin{equation}
K\otimes V=W_+\oplus W_-,\qquad W_\pm=\{w\colon \ad_w=\pm \ad'_w\},
\end{equation}
ideals in $K\otimes \fg$ as well as in $K\otimes \fg'$, cf. (8.3).

The Galois group $G$ of $K/k$ acts on $K\otimes \fg$, $K\otimes\fg'$ by automorphisms of Lie algebras over $k$.
(8.4) shows that $W_\pm$ are $G$--invariant.
The spaces of fixed vectors of $G$ in $W_\pm$ are
$$
V_\pm=\bigl\lbrace\sum_{g\in G} gw\colon w\in W_\pm\bigr\rbrace.
$$
Clearly, $V_+\oplus V_-\subset K\otimes V$ consists of all fixed vectors of $G$, that is, $V_+\oplus V_-=V$.
Finally, $V_\pm$ are ideals in $\fg$, $\fg'$, for
$$
[V_\pm V]\subset [W_\pm W]\cap V\subset W_\pm\cap V,
$$
and the last term here is the subspace of $W_{\pm}$ fixed by $G$, namely $V_\pm$.

\end{document}